\newtheorem*{theo*}{Theorem}
\newtheorem{theo}{Theorem}
\newtheorem{lemma}{Lemma}[section]
\newtheorem{coro}[theo]{Corollary}
\newtheorem{definition}[lemma]{Definition}
\newtheorem{prop}[lemma]{Proposition}
\newtheorem{claim}[lemma]{Claim}
\newtheorem{remark}[lemma]{Remark}
\renewenvironment{proof}[1][\proofname]
{\par\pushQED{\qed}
	\normalfont\topsep6\p@\@plus6\p@\relax\trivlist
	\item[\hskip\labelsep\bfseries#1\@addpunct{.}]
	\ignorespaces}
{\popQED\endtrivlist\@endpefalse}
\newcommand{\N}{\mathbb{N}}
\renewcommand{\P}{\mathcal{P}}
\newcommand{\Q}{\mathcal{Q}}
\newcommand{\R}{\mathcal{R}}
\newcommand{\E}{\mathcal{E}}
\newcommand{\A}{\mathcal{A}}
\newcommand{\V}{\mathcal{V}}
\DeclareMathOperator{\poly}{poly}
\DeclareMathOperator{\twr}{T}
\DeclareMathOperator{\wow}{W}
\renewcommand{\a}{\alpha}
\renewcommand{\b}{\beta}
\renewcommand{\d}{\delta}
\newcommand{\g}{\gamma}
\renewcommand{\L}{\mathcal{L}}
\newcommand{\sub}{\subseteq}
\newcommand{\sm}{\setminus}
\newcommand{\e}{\epsilon}
\newcommand{\Z}{\mathcal{Z}}
\newcommand{\K}{\mathcal{K}}
\newcommand{\G}{\mathcal{G}}
\newcommand{\Aside}{\mathbf{A}}
\newcommand{\Bside}{\mathbf{B}}
\newcommand{\Cside}{\mathbf{C}}
\newcommand{\Lside}{\mathbf{L}}
\newcommand{\Rside}{\mathbf{R}}
\newcommand{\Hy}[1]{H}
\DeclareMathOperator{\codeg}{codeg}
\DeclareMathOperator{\Ack}{Ack}
\renewcommand{\k}{\kappa}
\newcommand{\Vside}{\mathbf{V}}
\date{}
\title{A Tight Bound for Hypergraph Regularity I}
\author{Guy Moshkovitz\thanks{School of Mathematical Sciences, Tel Aviv University, Tel Aviv 6997801, Israel. Email: {\tt guymoshkov@gmail.com}. Supported in part by ERC Starting Grant 633509.} 
\and Asaf Shapira\thanks{School of Mathematical Sciences, Tel Aviv University, Tel Aviv 6997801, Israel. Email: {\tt asafico@tauex.tau.ac.il}. Supported in part by ISF Grant 1028/16 and ERC Starting Grant 633509.}}
\begin{document}

\maketitle
\begin{abstract}
The hypergraph regularity lemma -- the extension of Szemer\'edi's graph regularity lemma to the setting of $k$-uniform hypergraphs -- is one of the most celebrated combinatorial results obtained in the past decade.
By now there are several (very different) proofs of this lemma, obtained by Gowers, by Nagle-R\"odl-Schacht-Skokan and by Tao.
Unfortunately, what all these proofs have in common is that they yield regular partitions whose order is given by the $k$-th Ackermann function.
We show that such Ackermann-type bounds are unavoidable for every $k \ge 2$, thus confirming a prediction of Tao.

Prior to our work, the only result of the above type was Gowers' famous lower bound for graph regularity.
In this paper we describe the key new ideas which enable us to overcome several barriers which
stood in the way of establishing such bounds for hypergraphs of higher uniformity.
One of them is a tight bound for a new (very weak) version of the {\em graph} regularity lemma. Using this bound, we prove
a lower bound for any regularity lemma of $3$-uniform hypergraphs that satisfies certain mild conditions.
We then show how to use this result in order to prove a tight bound for
the hypergraph regularity lemmas of Gowers and of Frankl and R\"odl.
We will obtain similar results for hypergraphs of arbitrary uniformity $k\geq 2$ in a subsequent paper.

\end{abstract}

\section{Introduction}

As part of the proof of his eponymous theorem~\cite{Szemeredi75} on arithmetic progressions in dense sets of integers, Szemer\'edi developed (a variant of what is now known as) the graph {\em regularity lemma}~\cite{Szemeredi78}.
The lemma roughly states that the vertex set of every graph can be partitioned into a bounded number of parts such that
almost all the bipartite graphs induced by pairs of parts in the partition are quasi-random.
In the past four decades this lemma has become one of the (if not the) most powerful tools in extremal combinatorics, with applications in many other areas of mathematics. We refer the reader to~\cite{KomlosShSiSz02,RodlSc10}
for more background on the graph regularity lemma, its many variants and its numerous applications.

Perhaps the most important and well-known application of the graph regularity lemma is the original
proof of the {\em triangle removal lemma}, which states that if an $n$-vertex graph $G$ contains only $o(n^3)$ triangles, then one can turn $G$ into a triangle-free graph by removing only $o(n^2)$ edges (see \cite{ConlonFox13} for more details).
It was famously observed by Ruzsa and Szemer\'edi~\cite{RuzsaSz76} that the triangle removal lemma implies Roth's theorem~\cite{Roth54}, the special case of Szemer\'edi's theorem for $3$-term arithmetic progressions.
The problem of extending the triangle removal lemma to the hypergraph
setting was raised by Erd\H{o}s, Frankl and R\"odl~\cite{ErdosFrRo86}. One of the main motivations for obtaining such a result was the observation of Frankl and R\"odl~\cite{FrankRo02} (see also~\cite{Solymosi04}) that such a result would allow one to extend the Ruzsa--Szemer\'edi~\cite{RuzsaSz76} argument and thus obtain an alternative proof of Szemer\'edi's theorem for progressions of arbitrary length.

The quest for a hypergraph regularity lemma, which would allow one to prove a hypergraph removal lemma, took about 20 years.
The first milestone was the result of Frankl and R\"odl~\cite{FrankRo02}, who obtained a regularity lemma for $3$-uniform hypergraphs. About 10 years later, the approach of~\cite{FrankRo02} was extended to hypergraphs of arbitrary uniformity by R\"odl, Skokan, Nagle and Schacht~\cite{NagleRoSc06, RodlSk04}.
At the same time, Gowers~\cite{Gowers07} obtained an alternative version of the regularity lemma for $k$-uniform hypergraphs (from now on we will use $k$-graphs instead of $k$-uniform hypergraphs).
Shortly after, Tao~\cite{Tao06} and R\"odl and Schacht~\cite{RodlSc07,RodlSc07-B} obtained two more versions of the lemma.

As it turned out, the main difficulty with obtaining a regularity lemma for $k$-graphs was defining the correct notion of hypergraph regularity that would:
$(i)$ be strong enough to allow one to prove a counting lemma, and
$(ii)$ be weak enough to be satisfied by every hypergraph (see the discussion in~\cite{Gowers06} for more on this issue).
And indeed, the above-mentioned variants of the hypergraph regularity lemma rely on four different notions of quasi-randomness, which
to this date are still not known to be equivalent\footnote{This should be contrasted with the setting of graphs in which
(almost) all notions of quasi-randomness are not only known to be equivalent but even effectively equivalent. See e.g.~\cite{ChungGrWi89}.} (see~\cite{NaglePoRoSc09} for some partial results). What all of these proofs {\em do} have in common however,
is that they supply only Ackermann-type bounds for the size of a regular partition.\footnote{Another variant of the hypergraph regularity lemma was obtained in~\cite{ElekSz12}. This approach does not supply any quantitative bounds.} 
More precisely, if we let $\Ack_1(x)=2^x$ and then define $\Ack_k(x)$ to be the $x$-times iterated\footnote{$\Ack_2(x)$ is thus a tower of exponents of height $x$, 
	$\Ack_3(x)$ is the so-called wowzer function, etc.} version of $\Ack_{k-1}$, then all the above proofs guarantee to produce a regular partition of a $k$-graph whose order can be bounded from above by an $\Ack_k$-type function.

One of the most important applications of the $k$-graph regularity lemma was that it gave the first explicit
bounds for the multidimensional generalization of Szemer\'edi's theorem, see~\cite{Gowers07}. The original proof of this result, obtained by Furstenberg and Katznelson~\cite{FurstenbergKa78}, relied on Ergodic Theory and thus supplied no quantitative bounds at all.
Examining the reduction between these theorems~\cite{Solymosi04} reveals that if one could improve the Ackermann-type bounds for the $k$-graph regularity
lemma, by obtaining (say) $\Ack_{k_0}$-type upper bounds (for all $k$), then one would obtain the first primitive recursive bounds for the
multidimensional generalization of Szemer\'edi's theorem. Let us note that obtaining such bounds just for
van der Waerden's theorem~\cite{Shelah89} and Szemer\'edi's theorem~\cite{Szemeredi75} (which are two special case) were open problems for many decades till
they were finally solved by Shelah~\cite{Shelah89} and Gowers~\cite{Gowers01}, respectively.
Further applications of the $k$-graph regularity lemma (and the hypergraph removal lemma in particular) are described in~\cite{RodlNaSkScKo05} and~\cite{RodlTeScTo06} as well as in R\"odl's recent ICM survey~\cite{Rodl14}.

A famous result of Gowers~\cite{Gowers97} states that the $\Ack_2$-type upper bounds for graph regularity
are unavoidable. Several improvements~\cite{FoxLo17},
variants~\cite{ConlonFo12,KaliSh13,MoshkovitzSh18} and simplifications~\cite{MoshkovitzSh16} of Gowers' lower bound were recently obtained, but no analogous lower bound was derived even for $3$-graph regularity.
The numerous applications of the hypergraph regularity lemma naturally lead to the question of whether one can improve upon the Ackermann-type
bounds mentioned above and obtain primitive recursive bounds
for the $k$-graph regularity lemma. Tao~\cite{Tao06-h} predicted that the answer to this question is negative, in the sense that one cannot obtain better than $\Ack_k$-type upper bounds for the $k$-graph regularity lemma for every $k \ge 2$.
The main result presented here and in the followup \cite{MSk} confirms this prediction.


\begin{theo}{\bf[Main result, informal statement]}\label{thm:main-informal}
The following holds for every $k\geq 2$: every regularity lemma for $k$-graphs satisfying some
mild conditions can only guarantee to produce partitions of size bounded by an $\Ack_k$-type function.
\end{theo}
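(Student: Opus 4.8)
The plan is to reduce the $k$-graph case to the $(k-1)$-graph case, so that an $\Ack_k$-type lower bound for $k$-graph regularity follows from an $\Ack_{k-1}$-type lower bound for $(k-1)$-graph regularity, with the graph case ($k=2$, giving an $\Ack_2$-type tower bound) serving as the base. As the excerpt indicates, the base case does not follow directly from Gowers' construction: one needs a lower bound for a very weak version of the \emph{graph} regularity lemma, tailored so that it can be iterated up the hierarchy. So the first step is to isolate and prove this tight lower bound for the weak graph regularity lemma; I expect this to require a Gowers-style construction (an iterated blow-up / random-like construction with tower-many levels) but carefully arranged so that the notion of irregularity being witnessed matches precisely the notion that will appear one level up when we pass from $(k-1)$-graphs to $k$-graphs.

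Second, I would set up the inductive step as a lifting operation: given a $(k-1)$-graph $H$ on vertex set $V$ that is hard to regularize (i.e., any partition witnessing regularity for $H$ must have order at least $\Ack_{k-1}(\text{something large})$), construct a $k$-graph $H'$ on a related vertex set whose $k$-edges encode, via an auxiliary coordinate, the structure of $H$ together with a fresh ``tower's worth'' of complexity. The key point is that a regular partition of $H'$ — partitioning both vertices and the $(k-1)$-sets of vertices, as all these hypergraph regularity notions do — must, when restricted appropriately, yield a regular partition of $H$; but the act of regularizing the new coordinate forces a tower-type (i.e., $\Ack_2$ relative to the current level, hence $\Ack_k$ overall) blow-up before one even ``sees'' the embedded $H$. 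This is where the ``mild conditions'' on the regularity lemma are used: one needs that the lemma refines a given partition, that its output notion of regularity is at least as strong as some explicit weak notion on each relevant layer, and that the order of the output is a function only of the accuracy parameters (so the bound is meaningful).

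The main obstacle — and the reason the paper splits off $k=3$ first and defers general $k$ to the sequel — is the \emph{bookkeeping across the partition hierarchy}. In hypergraph regularity one does not partition just the vertex set; one partitions vertices, then pairs, then triples, and so on, and regularity of the top layer is defined relative to the partitions of all the lower layers. So the lifting construction must interact correctly with \emph{every} layer simultaneously: the embedded hard instance lives on certain layers, the ``tower gadget'' must be planted on an adjacent layer, and one must verify that no cheap regular partition of $H'$ can avoid paying the cost on the gadget layer by being clever on another layer. Making this airtight requires a clean abstract formulation of what ``a regularity lemma for $k$-graphs'' supplies on each layer, and then a delicate argument — essentially a counting/density transference between layers — showing that regularity of $H'$ projects down to regularity of $H$ without loss. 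The secondary technical difficulty is quantitative tracking: one must choose the accuracy parameters of the outer ($k$-graph) lemma so that, after the projection, the induced accuracy for the inner $(k-1)$-graph lemma is still in the regime where the inductive lower bound applies, which means the parameters at level $k$ must be set as an appropriate (tower/Ackermann) function of those at level $k-1$.

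Finally, once the inductive step is in place, I would assemble the proof by instantiating: plug the tight weak-graph-regularity bound into the lifting step with $k=3$ to get the $3$-graph (wowzer-type, i.e.\ $\Ack_3$) lower bound, and state this as the concrete theorem of the present paper; the general $k$ statement then follows by the same lifting iterated, to be carried out in \cite{MSk}. Throughout, I would phrase the hypotheses on the regularity lemma as explicitly as possible — refinement of a prescribed partition, a lower-bound-free notion of regularity on the coordinate that carries the gadget, and monotonicity of the partition-order in the accuracy — so that the result visibly covers the Gowers and Frankl–R\"odl formulations as promised.
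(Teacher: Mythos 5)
Your high-level framing is right in one key respect: you correctly identify that Gowers' construction does not directly feed the hierarchy, and that one needs a tight lower bound for a much weaker notion of \emph{graph} regularity that is stable under the passage to hypergraphs. But the inductive mechanism you propose — encode a hard $(k-1)$-graph $H$ into a $k$-graph $H'$ on an auxiliary coordinate so that regular partitions of $H'$ project to regular partitions of $H$ — is not what the paper does, and it runs into exactly the obstruction the paper warns about. The paper explicitly argues that one \emph{cannot} simply step up an $\Ack_{k-1}$-type lower bound for $(k-1)$-graph regularity to $k$-graphs; the $(k-1)$-graph lower bound is never used as a black box. Instead, for $k=3$, the paper applies the same \emph{graph}-level core construction (Lemma~\ref{theo:core}) twice: once with $\Lside=V^1$, $\Rside=V^2$ to produce a wowzer-long chain of successively refined edge-partitions $\G^1 \succ \cdots \succ \G^{s'}$ of $V^1\times V^2$; and then a second time with $\Lside = V^1\times V^2$ (the complete bipartite graph regarded as a new vertex set), $\Rside = V^3$, feeding the $\G^i$'s in as the $\L_i$'s. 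The output bipartite graph on $(V^1\times V^2,\,V^3)$ is then read as a $3$-graph via $G'=G_H^3$. Both ``layers'' of the partition hierarchy you worry about are controlled by one and the same graph lemma, applied at two different scales; the interaction between layers is handled by Claim~\ref{claim:uniform-refinement} rather than by a transference argument from $(k-1)$-graph regularity.

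There is a second genuine gap: your proposal does not address the one-sidedness of the core construction. Lemma~\ref{theo:core} only says ``if $\Q$ refines $\R_i$ then $\P$ refines $\L_i$,'' and the paper argues (in the paragraph ``Why is Lemma~\ref{theo:core} one-sided?'') that this cannot be strengthened when the $\L_i$'s grow at wowzer rate, for otherwise one would get a wowzer lower bound for graph regularity itself. Consequently, Lemma~\ref{lemma:ind-k} gives only a conditional implication (``if $\Z_2,\Z_3$ approximately refine level $i$, then $\Z_1$ approximately refines level $i+1$''), and an extra idea is needed to turn this into an unconditional lower bound: the paper arranges six copies of the one-sided construction along the $3$-edges of a tight $6$-cycle and uses a ``circle of implications,'' taking the minimum refinement level over the six vertex classes and deriving a contradiction. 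Without something playing this role, your ``lifting'' step would produce only a conditional statement and the proof would not close. So the proposal captures the spirit of the motivation but misses both the actual construction (graph lemma applied to a product vertex set, not $(k-1)$-graph induction) and the crucial de-conditioning step.
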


In this paper we will focus on proving the key ingredient needed for obtaining Theorem~\ref{thm:main-informal},
stated as Lemma~\ref{theo:core} in Subsection~\ref{subsec:overview}, and on showing how it can be used in order to prove Theorem~\ref{thm:main-informal}
for $k=3$. In a nutshell, the key idea is to use the graph construction given by Lemma~\ref{theo:core} in order
to construct a $3$-graph by taking a certain ``product'' of two graphs that are hard for graph regularity, in order to get
a $3$-graph that is hard for $3$-graph regularity. See the discussion following Lemma~\ref{theo:core} in Subsection~\ref{subsec:overview}.
Dealing with $k=3$ in this paper will allow us to present all the new ideas needed in order to actually prove Theorem~\ref{thm:main-informal}
for arbitrary $k$, in the slightly friendlier setting of $3$-graphs.
In a followup paper~\cite{MSk}, we will show how Lemma~\ref{theo:core} can be used in order to prove
Theorem~\ref{thm:main-informal} for all $k \ge 2$.

In this paper we will also show how to derive from Theorem~\ref{thm:main-informal} tight lower bounds for
the $3$-graph regularity lemmas due to Frankl and R\"odl~\cite{FrankRo02} and to Gowers~\cite{Gowers06}.

\begin{coro}\label{coro:FR-LB}
There is an $\Ack_3$-type lower bound for the $3$-graph regularity lemmas of Frankl and R\"odl~\cite{FrankRo02} and of Gowers~\cite{Gowers06}.
\end{coro}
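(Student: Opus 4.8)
The plan is to derive this as a reduction from Theorem~\ref{thm:main-informal}, instantiated at $k=3$: since Theorem~\ref{thm:main-informal} asserts an $\Ack_3$-type lower bound for \emph{every} $3$-graph regularity lemma satisfying certain mild conditions, it suffices to verify that the specific regularity lemmas of Frankl--R\"odl and of Gowers do fall under the scope of that theorem. Concretely, I would first isolate from the statement of Theorem~\ref{thm:main-informal} (and its formal counterpart Lemma~\ref{theo:core} in Subsection~\ref{subsec:overview}) exactly which axioms a $3$-graph regularity lemma must obey --- these will be conditions of the form: the lemma outputs a vertex partition together with an edge partition of the complete bipartite (or tripartite) graphs between the parts; the regularity requested is a notion that is \emph{implied by} (is at most as strong as) the one we can construct a lower bound against; and the number of parts is controlled by a function of the accuracy parameters only. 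Then for each of the two target lemmas I would quote its precise statement from~\cite{FrankRo02} and~\cite{Gowers06} and exhibit the map from its output to the abstract output format.

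The substantive step is a hierarchy/monotonicity argument: the notion of regularity used by Frankl--R\"odl (and likewise Gowers') must be shown to be no weaker than the abstract notion for which the lower bound is proved, so that a $3$-graph forcing many parts for the abstract lemma also forces many parts for the concrete one. In the graph case this is the familiar observation that any partition which is $\eps$-regular in a strong sense is also $\eps'$-regular in a weak sense; here the analogue is that an $(\eps,r)$-regular (or $\d$-regular with respect to the underlying polyad structure) $3$-graph partition in the Frankl--R\"odl/Gowers sense refines, or at least implies the existence of, a partition of the weaker abstract type with comparable parameters. I would carry this out by: (1) recalling the definition of the polyad/octahedron-based regularity in~\cite{FrankRo02}; (2) showing that if the hard $3$-graph produced by Theorem~\ref{thm:main-informal} had a small Frankl--R\"odl-regular partition, one could read off from it a small partition satisfying the abstract hypotheses, contradicting the $\Ack_3$ lower bound; (3) repeating the (easier, because Gowers' notion is combinatorially closer to the abstract one used in the construction) argument for~\cite{Gowers06}.

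The main obstacle I anticipate is bookkeeping the accuracy parameters through the reduction. The Frankl--R\"odl lemma has a two-level structure --- a vertex partition plus, for each pair of parts, a partition of the pairs into ``bipartite graphs'' --- each governed by its own function, and the $\Ack_3$ lower bound is stated for a particular quantified choice of these functions. I would need to check that the functions for which the hard instance is constructed can be matched to admissible choices in~\cite{FrankRo02,Gowers06} (e.g.\ that taking the inter-part graph partition fine enough is allowed, and that the resulting tower-of-wowzer blow-up is not absorbed by slack in the parameters), and that the abstract ``mild conditions'' of Theorem~\ref{thm:main-informal} were stated liberally enough to include both lemmas simultaneously. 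Once the parameter translation is pinned down, the $\Ack_3$ bound transfers verbatim, since neither reduction changes the order of magnitude of the partition size by more than a bounded composition of elementary functions, which is absorbed into the $\Ack_3$-type growth.
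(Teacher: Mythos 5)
Your strategy is exactly the paper's: prove a formal lower bound for the abstract $\langle\d\rangle$-regularity notion (Theorem~\ref{theo:main}), then show that Frankl--R\"odl regularity (resp.\ Gowers quasirandomness) of a partition is strong enough to imply $\langle\d\rangle$-regularity, so the hard $3$-graph from Theorem~\ref{theo:main} is also hard for those lemmas. For Frankl--R\"odl this is what Theorem~\ref{theo:FR-LB} together with Claim~\ref{claim:reduction} do. For Gowers you propose comparing directly against the abstract notion, whereas the paper instead takes a shortcut: it cites the known implication (Proposition~\ref{prop:Schacht}, from Nagle--R\"odl--Schacht) that Gowers quasirandomness implies Frankl--R\"odl regularity, and then simply invokes Theorem~\ref{theo:FR-LB}; the paper explicitly remarks that the direct route you describe is available but longer. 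The one substantive ingredient your sketch does not isolate is that the Frankl--R\"odl~$\Rightarrow$~$\langle\d\rangle$ reduction is not merely a parameter translation: a Frankl--R\"odl triad consists of \emph{three} arbitrary $\e_2$-regular bipartite graphs, while $\langle\d\rangle$-regularity of the auxiliary graph $G_H^3$ amounts to density comparisons over subsets where two of the three bipartite graphs are replaced by \emph{complete} bipartite graphs; showing that the triad-level density control transfers to this modified structure requires a triangle counting lemma argument (Lemma~\ref{lemma:triangle-counting} feeding into Claim~\ref{claim:reduction}). You correctly flag parameter bookkeeping as the main obstacle, but the counting lemma is the actual mathematics that makes the bookkeeping possible.
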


In \cite{MSk} we will show how to derive from Theorem~\ref{thm:main-informal} a tight lower bound for
the $k$-graph regularity lemma due to R\"odl and Schacht~\cite{RodlSc07}.

\begin{coro}\label{coro:RS-LB}
There is an $\Ack_k$-type lower bound for the $k$-graph regularity lemma of R\"odl and Schacht~\cite{RodlSc07}.
\end{coro}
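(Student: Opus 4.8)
The plan is to deduce Corollary~\ref{coro:RS-LB} from Theorem~\ref{thm:main-informal} in the same spirit as the derivation of Corollary~\ref{coro:FR-LB} for the Gowers and Frankl--R\"odl lemmas, namely by checking that the notion of regularity underlying the R\"odl--Schacht lemma~\cite{RodlSc07} meets the mild conditions under which the generic lower bound applies. Recall that the R\"odl--Schacht lemma attaches to a $k$-graph $H$ a hierarchy of partitions of the $j$-subsets of $V(H)$ for $1 \le j \le k-1$, together with a quasi-randomness guarantee for $H$ relative to the top layer, and that the order of the output is a suitable aggregate size of this hierarchy (e.g.\ the product of the numbers of parts at each level). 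The first step is to write this definition out in full and to extract from it the bounded-size ``reduced'' object it determines --- the record of densities of $H$ over all the polyads spanned by the hierarchy --- so that it can be compared with the generic framework of Theorem~\ref{thm:main-informal}.

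The heart of the matter is the second step: showing that every R\"odl--Schacht-regular partition of a $k$-graph is also regular in the weak generic sense of Theorem~\ref{thm:main-informal}, with at most a mild (say, exponential) blow-up in the order. Since the R\"odl--Schacht notion is strong enough to power a counting lemma~\cite{RodlSc07-B}, it is a fortiori at least as strong as the minimal requirement baked into the generic framework, so making this precise should reduce to: $(a)$ a monotonicity/growth calculation verifying that the parameter functions in the generic conditions may be taken to dominate those demanded by R\"odl--Schacht (and to absorb any auxiliary parameters, such as the $r$-tuple parameter, of their formulation); and $(b)$ a Cauchy--Schwarz / averaging argument converting the per-polyad regularity of R\"odl--Schacht into the averaged guarantee the generic notion uses. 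Plugging the $k$-graph of Theorem~\ref{thm:main-informal} (built from the graph produced by Lemma~\ref{theo:core}) into this reduction, any R\"odl--Schacht-regular partition of it must have order at least $\Ack_k$-type; as an exponential blow-up only shifts the argument of $\Ack_k$ by a bounded amount, the resulting bound stays $\Ack_k$-type, which is exactly Corollary~\ref{coro:RS-LB}.

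The main obstacle is step two, and specifically the bookkeeping imposed by the full hierarchy of partitions: one must argue not merely that the top-layer partition is enormous but that the \emph{entire} hierarchy is, and one must be careful that the local-versus-global gap between the two flavours of quasi-randomness is absorbed without losing an entire $\Ack$-level (which would ruin the tightness of the bound). A further subtlety is that the R\"odl--Schacht hierarchy interleaves partitions at every level $j < k$, so the comparison cannot be made level-by-level in isolation; it has to track how regularity at higher levels constrains the coarser levels below. All of this is carried out in the followup paper~\cite{MSk}.
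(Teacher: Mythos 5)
The paper you are working from does not actually contain a proof of Corollary~\ref{coro:RS-LB}: it explicitly defers it to the followup paper~\cite{MSk}, remarking only that the deduction ``will also turn out to be quite similar to the way Corollary~\ref{coro:FR-LB} is derived from Theorem~\ref{theo:main} in the present paper.'' Your high-level plan---show that R\"odl--Schacht regularity is at least as strong as the generic $\langle\d\rangle$-regularity of Theorem~\ref{thm:main-informal}, then plug in the hard instance---is exactly the strategy the paper announces, so on that count you are aligned with the intended approach. You also correctly note that since the corollary concerns general $k$ while this paper only proves Theorem~\ref{theo:main} for $k=3$, the proof must in any case rely on the general version of the main theorem established in~\cite{MSk}.

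That said, your technical sketch diverges in a few places from what the analogous proof of Corollary~\ref{coro:FR-LB} actually does, and since the paper says the RS deduction is ``quite similar'' to that one, these discrepancies are worth flagging. First, you allow for ``a mild (say, exponential) blow-up in the order'' and then argue this only shifts the argument of $\Ack_k$, but the FR/Gowers derivation involves no blow-up at all: the proof shows that the \emph{same} partition $(\Z,\E)$ that is $\e$-regular in the FR sense is already $\langle\a\rangle$-regular for a fixed constant $\a$, so Theorem~\ref{theo:main} applies directly. Second, the comparison between regularity notions in the proof of Theorem~\ref{theo:FR-LB} is not really a Cauchy--Schwarz/averaging argument; it is a direct triad/subtriad density comparison (Claim~\ref{claim:reduction}) combined with a triangle counting lemma, together with a one-line reduction from Gowers' octahedral quasirandomness to FR regularity (Proposition~\ref{prop:Schacht}). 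Your mention of extracting ``a bounded-size reduced object'' recording polyad densities is not part of either derivation. These are not fatal gaps in a sketch whose details are admittedly carried out elsewhere, but if you wanted to carry out the proof in the spirit of the paper you should be looking for an implication of the form ``RS-regular implies $\langle\a\rangle$-regular for constant $\a$ under the usual parameter hierarchy constraints,'' analogous to Claim~\ref{claim:reduction}, rather than an averaging argument with an order blow-up.
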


Before getting into the gory details of the proof, let us informally discuss what we think are some
interesting aspects of the proof of Theorem \ref{thm:main-informal}.

\paragraph{Why is it hard to ``step up''?}
The reason why the upper bound for graph regularity is of tower-type
is that the process of constructing a regular partition of a graph proceeds by a sequence of steps, each increasing the size of the partition exponentially.
The main idea behind Gowers' lower bound for graph regularity~\cite{Gowers97} is in ``reverse engineering'' the proof of the upper bound; in other words,
in showing that (in some sense) the process of building the partition using a sequence of exponential refinements is unavoidable.
Now, a common theme in all proofs of the hypergraph regularity lemma
for $k$-graphs is that they proceed by induction on $k$; that is, in the process of constructing a regular
partition of the input $k$-graph $H$, the proof applies the $(k-1)$-graph regularity lemma on certain $(k-1)$-graphs
derived from $H$. This is why one gets $\Ack_k$-type upper bounds. So with~\cite{Gowers97} in mind, one might guess
that in order to prove a matching lower bound one should ``reverse engineer'' the proof of the upper bound and show that such a process is unavoidable. However, this turns out to be false! As we argued in~\cite{MoshkovitzSh18}, in order to prove an {\em upper bound} for (say) $3$-graph regularity it is in fact enough to iterate a relaxed version of graph regularity which we call the ``sparse regular approximation lemma'' (SRAL for short).
Therefore, in order to prove an $\Ack_3$-type {\em lower bound} for $3$-graph
regularity one cannot simply ``step up'' an $\Ack_2$-type lower bound for graph regularity. Indeed, a necessary condition
would be to prove an $\Ack_2$-type lower bound for SRAL. See also the discussion following Lemma~\ref{theo:core} in Subsection \ref{subsec:overview}
on how do we actually use a graph construction in order to get a $3$-graph construction.

\paragraph{A new notion of graph/hypergraph regularity:}
In a recent paper \cite{MoshkovitzSh18} we proved an $\Ack_2$-type
lower bound for SRAL.
As it turned out, even this lower bound was not enough to allow us to step up the graph lower bound
into a $3$-graph lower bound. To remedy this, in the present paper we introduce an even weaker notion of graph/hypergraph regularity
which we call $\langle \d \rangle$-regularity. This notion seems to be right at the correct level of ``strength'';
on the one hand, it is strong enough to allow one to prove $\Ack_{k-1}$-type lower bounds for $(k-1)$-graph regularity, while at the
same time weak enough to allow one to induct, that is, to use it in order to then prove $\Ack_{k}$-type lower bounds for $k$-graph regularity.
Another critical feature of our new notion of hypergraph regularity is that it has (almost) nothing to do with hypergraphs!
A disconcerting aspect of all proofs of the hypergraph regularity lemma is that they involve a very complicated nested/inductive structure.
Furthermore, one has to introduce an elaborate hierarchy of constants that controls how regular one level of the partition is compared to
the previous one. What is thus nice about our new notion is that it involves only various kinds of instances of graph $\langle \d \rangle$-regularity.
As a result, our proof is (relatively!) simple.

\paragraph{How do we find witnesses for $3$-graph irregularity?}
The key idea in Gowers' lower bound~\cite{Gowers97} for graph regularity was in constructing a graph $G$, based on a sequence of
partitions ${\cal P}_1,{\cal P}_2,\ldots$ of $V(G)$, with the following
inductive property: if a vertex partition $\Z$ refines ${\cal P}_i$
but does not refine ${\cal P}_{i+1}$ then $\Z$ is not $\epsilon$-regular.
The key step of the proof of~\cite{Gowers97} is in finding witnesses showing
that pairs of clusters of $\Z$ are irregular. The main difficulty in extending this
strategy to $k$-graphs already reveals itself in the setting of $3$-graphs. In a nutshell, while
in graphs, a witness to irregularity of a pair of clusters $A,B \in \Z$ is
{\em any} pair of large subsets $A' \sub A$ and $B' \sub B$, in the setting of $3$-graphs
we have to find three large edge-sets (called a {\em triad}, see Section~\ref{sec:FR}) that
have an additional property: they must together form a graph containing many triangles.
It thus seems quite hard to extend Gowers' approach already to the setting of $3$-graphs. By working
with the much weaker notion of $\langle \d \rangle$-regularity, we circumvent this
issue since two of the edges sets in our version of a triad are always complete bipartite graphs.
See Subsection~\ref{subsec:definitions}.

\paragraph{What is then the meaning of Theorem \ref{thm:main-informal}?}
Our main result, stated formally as Theorem~\ref{theo:main}, establishes an $\Ack_3$-type lower bound
for $\langle \d \rangle$-regularity of $3$-graphs, that is, for a specific new
version of the hypergraph regularity lemma. Therefore, we immediately get $\Ack_3$-type lower bounds for
any $3$-graph regularity lemma which is at least as strong as our new lemma, that is, for any lemma
whose requirements/guarantees imply those that are needed in order to satisfy our new notion of regularity.
In particular, we will prove Corollary \ref{coro:FR-LB} by showing that the regularity notions
used in these lemmas are at least as strong as $\langle \d \rangle$-regularity.

In \cite{MSk} we will prove Theorem~\ref{thm:main-informal} in its full generality by extending Theorem~\ref{theo:main}
to arbitrary $k$-graphs. This proof, though technically more involved, will be
quite similar at its core to the way we derive Theorem~\ref{theo:main} from Lemma~\ref{theo:core} in the present paper.
The deduction of Corollary \ref{coro:RS-LB}, which appears in \cite{MSk}, will also turn out to be quite similar to the way
Corollary \ref{coro:FR-LB} is derived from Theorem~\ref{theo:main} in the present paper

\paragraph{How strong is our lower bound?} Since Theorem \ref{thm:main-informal} gives a lower bound for
$\langle \d \rangle$-regularity and Corollaries \ref{coro:FR-LB} and \ref{coro:RS-LB} show that
this notion is at least as weak as previously used notions of regularity, it is natural to ask:
$(i)$ is this notion equivalent to one of the other notions? $(ii)$ is this notion strong enough for proving the
hypergraph removal lemma, which was one of the main reasons for developing
the hypergraph regularity lemma? We will prove that the answer to both questions is {\em negative} by showing that already for
graphs, $\langle \d \rangle$-regularity (for $\d$ a fixed constant) is not strong enough even for proving the triangle removal lemma.
This of course makes our lower bound even stronger as it already applies to a very weak notion of regularity.
In a nutshell, the proof proceeds by first taking a random tripartite graph, showing (using routine probabilistic
arguments) that with high probability the graph is $\langle \d \rangle$-regular yet contains a small number of triangles.
One then shows that removing these triangles, and then taking a blowup of the resulting graph, gives a triangle-free graph of positive density that is $\langle \d \rangle$-regular. The full details will appear in \cite{MSk}.

\paragraph{How tight is our bound?} Roughly speaking, we will show that for a $k$-graph with $pn^k$ edges,
every $\langle \d \rangle$-regular partition has order at least $\Ack_k(\log 1/p)$. In a recent paper \cite{MoshkovitzSh16}
we proved that in graphs, one can prove a matching $\Ack_2(\log 1/p)$ upper bound, even for a slightly stronger notion than $\langle \d \rangle$-regularity.
This allowed us to obtain a new proof of Fox's $\Ack_2(\log 1/\epsilon)$ upper bound for the graph removal lemma \cite{Fox11} (since the stronger notion allows to count small subgraphs). We believe that it should
be possible to match our lower bounds with $\Ack_k(\log 1/p)$ upper bounds (even for a slightly stronger notion analogous to the one used in \cite{MoshkovitzSh16}). We think that it should be possible to deduce from such an upper bound an $\Ack_k(\log 1/\epsilon)$ upper bound
for the $k$-graph removal lemma. The best known bounds for this problem are (at least) $\Ack_k(\poly(1/\epsilon))$.


\subsection{Paper overview}

In Section~\ref{sec:define} we will first define the new notion of hypergraph regularity, which we term $\langle \d \rangle$-regularity,
for which we will prove our main lower bound. We will then give the formal version of Theorem~\ref{thm:main-informal} (see Theorem~\ref{theo:main}).
This will be followed by the statement of our core technical result, Lemma~\ref{theo:core},
and an overview of how this technical result is used in the proof of Theorem~\ref{theo:main}.
The proof of Theorem~\ref{theo:main} appears in Section~\ref{sec:LB}. 
We refer the reader to~\cite{MSk} for the proof of Lemma~\ref{theo:core}.
In Section \ref{sec:FR} we prove Corollary~\ref{coro:FR-LB}.
In Appendix~\ref{sec:FR-appendix} we give the proof of certain technical claims missing from Section~\ref{sec:FR}.


\section{$\langle \d \rangle$-regularity and Proof Overview}\label{sec:define}

Formally, a \emph{$3$-graph} is a pair $H=(V,E)$, where $V=V(H)$ is the vertex set and $E=E(H) \sub \binom{V}{3}$ is the edge set of $H$.
The number of edges of $H$ is denoted $e(H)$ (i.e., $e(H)=|E|$).
The $3$-graph $H$ is \emph{$3$-partite} on (disjoint) vertex classes $(V_1,V_2,V_3)$ if every edge of $H$ has a vertex from each $V_i$.
The \emph{density} of a $3$-partite $3$-graph $H$ is $e(H)/\prod_{i=1}^3 |V_i|$.
For a bipartite graph $G$, the set of edges of $G$ between disjoint vertex subsets $A$ and $B$ is denoted by $E_G(A,B)$; the density of $G$ between $A$ and $B$ is denoted by $d_G(A,B)=e_G(A,B)/|A||B|$, where $e_G(A,B)=|E_G(A,B)|$. We use $d(A,B)$ if $G$ is clear from context.
When it is clear from context, we sometimes identify a hypergraph with its edge set. In particular, we will write $V_1 \times V_2$ for the complete bipartite graph on vertex classes $(V_1,V_2)$.
%
%
For partitions $\P,\Q$ of the same underlying set, we say that $\Q$ \emph{refines} $\P$, denoted $\Q \prec \P$, if every member of $\Q$ is contained in a member of $\P$.
We say that $\P$ is \emph{equitable} if all its members have the same size.\footnote{In a regularity lemma one allows the parts to differ in size by at most $1$ so that it applies to all (hyper-)graphs. For our lower bound this is unnecessary.}
We use the notation $x \pm \e$ for a number lying in the interval $[x-\e,\,x+\e]$.

In the following definition, and in the rest of the paper, we will sometimes identify a graph or a $3$-graph with its edge set when the vertex set is clear from context.

\begin{definition}[$2$-partition]\label{def:2-partition}
A \emph{$2$-partition} $(\Z,\E)$ on a vertex set $V$ consists of a partition $\Z$ of $V$ and a family of edge disjoint bipartite graphs
$\E$ so that:
\begin{itemize}
\item Every $E \in \E$ is a bipartite graph 
whose two vertex sets are distinct $Z,Z' \in \Z$.
\item For every $Z \neq Z' \in \Z$, the complete bipartite graph $Z \times Z'$ is the union of graphs from $\E$.
\end{itemize}
\end{definition}

Put differently, a $2$-partition consists of vertex partition $\Z$ and a collection of bipartite graphs $\E$ such that
$\E$ is a refinement of the collection of complete bipartite graphs $\{Z \times Z' : Z \neq Z' \in \Z \}$.

\subsection{$\langle \d \rangle$-regularity of graphs and hypergraphs}\label{subsec:definitions}

In this subsection we define our new\footnote{For $k=3$, related notions of regularity were studied in~\cite{ReiherRoSc16,Towsner17}.} notion of $\langle\d\rangle$-regularity, first for graphs and then for $3$-graphs in Definition~\ref{def:k-reg} below.
Let us first recall Szemer\'edi's notion of $\epsilon$-regularity.
A bipartite graph on $(A,B)$ is \emph{$\e$-regular} if for all subsets $A' \sub A$, $B' \sub B$ with $|A'|\ge\e|A|$, $|B'|\ge\e|B|$ we have $|d(A',B') -d(A,B)| \le \e$.
A vertex partition $\P$ of a graph is $\e$-regular
if
the bipartite graph induced on each but at most $\e|\P|^2$ of the pairs $(A,B)$ with $A \neq B \in \P$ is $\e$-regular.
Szemer\'edi's graph regularity lemma says that every graph
has an $\e$-regular equipartition of order at most some $\Ack_2(\poly(1/\e))$.
We now introduce a weaker notion of graph regularity which we will use throughout the paper.
\begin{definition}[graph $\langle\d\rangle$-regularity]\label{def:star-regular}
	A bipartite graph $G$ on $(A,B)$ is \emph{$\langle \d \rangle$-regular} if for all subsets $A' \sub A$, $B' \sub B$ with $|A'| \ge \d|A|$, $|B'|\ge\d|B|$ we have $d_G(A',B') \ge \frac12 d_G(A,B)$.\\
	A vertex partition $\P$ of a graph $G$ is \emph{$\langle \d \rangle$-regular}
	if one can add/remove at most $\d \cdot e(G)$ edges so that the bipartite graph induced on each $(A,B)$ with $A \neq B \in \P$ is $\langle \d \rangle$-regular.
	%
\end{definition}

For the reader worried that in Definition~\ref{def:star-regular} we merely replaced the $\e$
from the definition of $\e$-regularity with $\d$, we refer to the discussion following Theorem~\ref{theo:main} below.


%
%

The definition of $\langle\d\rangle$-regularity for hypergraphs involves the $\langle\d\rangle$-regularity notion for graphs, applied to certain auxiliary graphs which are defined as follows.


\begin{definition}[The auxiliary graph $G_{H}^i$]\label{def:aux}
For a $3$-partite $3$-graph $H$ on vertex classes $(V_1,V_2,V_3)$,
we define a bipartite graph $G_{H}^1$ on the vertex classes $(V_2 \times V_3,\,V_1)$ by
$$E(G_{H}^1) = \big\{ ((v_2,v_3),v_1) \,\big\vert\, (v_1,v_2,v_3) \in E(H) \big\} \;.$$
The graphs $G_{H}^2$ and $G_{H}^3$ are defined in an analogous manner.
\end{definition}

Importantly, for a $2$-partition (as defined in Definition~\ref{def:2-partition}) to be $\langle\d\rangle$-regular it must first satisfy a requirement on the regularity of its parts.


\begin{definition}[$\langle\d\rangle$-good partition]\label{def:k-good}
A $2$-partition $(\Z,\E)$ on $V$ is \emph{$\langle\d\rangle$-good} if all bipartite graphs in $\E$
(between any two distinct vertex clusters of $\Z$) are $\langle \d \rangle$-regular.
\end{definition}




For a $2$-partition $(\Z,\E)$ of a $3$-partite $3$-graph on vertex classes $(V_1,V_2,V_3)$ with $\Z \prec \{V_1,V_2,V_3\}$, for every $1 \le i \le 3$ we denote $\Z_i = \{Z \in \Z \,\vert\, Z \sub V_i\}$, and we denote $\E_i = \{E \in \E \,\vert\, E \sub V_j \times V_k\}$ where $\{i,j,k\}=\{1,2,3\}$.
So for example, $\E_1$ is thus a partition of $V_2 \times V_3$.

\begin{definition}[$\langle\d\rangle$-regular partition]\label{def:k-reg}
Let $H$ be a $3$-partite $3$-graph on vertex classes $(V_1,V_2,V_3)$
and $(\Z,\E)$ be a $\langle \d \rangle$-good $2$-partition with  $\Z \prec \{V_1,V_2,V_3\}$.
We say that $(\Z,\E)$ is a \emph{$\langle \d \rangle$-regular} partition of $H$ if
for every $1 \le i \le 3$,
$\E_i \cup \Z_i$ is a $\langle \d \rangle$-regular partition of $G_H^i$.
%
%
%
	%
	%
\end{definition}

\subsection{Formal statement of the main result}

We are now ready to formally state our tight lower bound for $3$-graph $\langle \d \rangle$-regularity (the formal version of
Theorem~\ref{thm:main-informal} above for $k=3$). Recall that we define the {\em tower} functions $\twr(x)$ to be a tower of exponents of height $x$,
and then define the {\em wowzer} function $\wow(x)$ to be the $x$-times iterated tower function, that is
$\wow(x)= \underbrace{\twr(\twr(\cdots(\twr(1))\cdots))}_{x \text{ times}}$.

\begin{theo}[Main result]\label{theo:main}
For every $s \in \N$ there is a $3$-partite $3$-graph $H$ on vertex classes of equal size and of density at least $2^{-s}$,
and a partition $\V_0$ of $V(H)$ with $|\V_0| \le 2^{300}$, such that if
$(\Z,\E)$ is a $\langle 2^{-73} \rangle$-regular partition of $H$ with
$\Z \prec \V_0$ then $|\Z| \ge \wow(s)$.

%

\end{theo}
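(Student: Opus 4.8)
\textbf{Proof proposal for Theorem~\ref{theo:main}.}

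The plan is to build the hard $3$-graph $H$ by taking a suitable ``product'' of two graphs that are individually hard for graph $\langle\d\rangle$-regularity, as supplied by the core lemma (Lemma~\ref{theo:core}). More precisely, I would fix a large parameter and invoke Lemma~\ref{theo:core} (in the spirit of Gowers-type constructions) to obtain a graph $G$ on a vertex set $V$ together with a nested sequence of partitions $\P_1 \succ \P_2 \succ \cdots$ having the key inductive property that any vertex partition refining $\P_i$ but not $\P_{i+1}$ fails to be $\langle\d\rangle$-regular, and crucially that these partitions are ``robust'' in the sense needed to iterate a tower-type construction. I would then use one copy of such a graph to control the behaviour along the $V_1$-direction and the auxiliary graph $G_H^1$ (whose $V_2\times V_3$ side is, by design, built out of \emph{complete} bipartite blocks so that the witnesses to irregularity are forced), and a second, ``inner'' copy of the construction — inserted inside each block of the $V_2\times V_3$ partition — to control the $V_2, V_3$ directions. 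The height of the resulting irregularity hierarchy is then a tower of towers: each of the $s$ outer exponential refinement steps drags along an entire tower's worth of inner refinement, which is exactly how $\wow(s)=\Ack_3(s)$ arises.

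The argument would proceed in the following steps. First, set up the vertex classes $V_1,V_2,V_3$ of equal size and the coarse partition $\V_0$ (of bounded size $\le 2^{300}$) encoding the top level of the hierarchy; define $H$ as the $3$-graph whose edges are prescribed by the product construction, and verify that $e(H) \ge 2^{-s}\prod|V_i|$ by a direct density computation. Second, given a $\langle 2^{-73}\rangle$-regular partition $(\Z,\E)$ of $H$ with $\Z\prec\V_0$, unwind Definition~\ref{def:k-reg}: for each $i$, the partition $\E_i\cup\Z_i$ is a $\langle\d\rangle$-regular partition of the auxiliary graph $G_H^i$, and the $\langle\d\rangle$-goodness of $(\Z,\E)$ forces the bipartite graphs in $\E$ to be $\langle\d\rangle$-regular. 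Third — the heart of the proof — run an induction on the hierarchy level: using the ``only few edges may be added/removed'' slack in Definition~\ref{def:star-regular}, show that if $\Z$ (equivalently $\E_1\cup\Z_1$) does not refine the $j$-th level of the outer partition, then one can locate subsets violating the $d(A',B')\ge\tfrac12 d(A,B)$ inequality on one of the auxiliary graphs, contradicting $\langle\d\rangle$-regularity; hence $\Z$ must refine every outer level, and similarly, within each block, refine every inner level. Fourth, conclude that $|\Z|$ is at least the number of atoms of the fully refined partition, which by the tower-of-towers bookkeeping is $\ge\wow(s)$. The numerical constants ($2^{-73}$, $2^{300}$, $\tfrac12$) are tracked through these steps to make sure the slack in each application of $\langle\d\rangle$-regularity is absorbed.

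The main obstacle I expect is Step three, specifically \emph{finding the witnesses to irregularity in the $3$-graph setting} while keeping the arithmetic of constants under control. In Gowers' graph lower bound any pair of large subsets $A'\subseteq A$, $B'\subseteq B$ works as a witness; here, as the introduction stresses, a genuine $3$-graph irregularity witness would be a triad that additionally supports many triangles, which is hard to produce. The construction circumvents this by arranging that two of the three edge-sets in the relevant triad are complete bipartite graphs, so the triangle-counting requirement becomes automatic — but one still has to verify carefully that the product structure makes $G_H^1$ behave like the hard graph $G$ from Lemma~\ref{theo:core}, that the $\langle\d\rangle$-goodness constraint on $\E$ (applied to the \emph{inner} graphs) cannot be cheated, and that the $\d\cdot e(G_H^i)$ worth of edge modifications allowed in Definition~\ref{def:star-regular} is genuinely too small to destroy the density drops witnessed at each level. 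Managing the interaction between the outer tower (controlled by $G_H^1$) and the inner tower (controlled by the inserted copies) — i.e. ensuring the two hierarchies compose rather than merely coexist — is where the real work lies, and is also the point at which the precise statement of Lemma~\ref{theo:core} will be used in full strength.
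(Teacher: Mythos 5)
Your high-level plan -- build $H$ by applying the core graph construction Lemma~\ref{theo:core} twice, once to produce a sequence of edge partitions $\G^1 \succ \G^2 \succ \cdots$ of $V_1 \times V_2$ and once more with $\Lside = V_1 \times V_2$ and $\R_i$'s on $V_3$, threading the first sequence's output back in as the $\L_i$'s for the second call, so that the wowzer growth comes from nesting one tower inside another -- is indeed what the paper does in its key argument (Lemma~\ref{lemma:ind-k}). You also correctly identify that the triad-witness difficulty is circumvented because two of the three edge-sets in the relevant triads are complete bipartite graphs. But there is a genuine gap in your Step 3 that the paper has to work hard to close.

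Lemma~\ref{theo:core} is \emph{one-sided}: its conclusion ``$\P \prec_\g \L_i$'' holds only under the premise ``$\Q \prec_{2^{-9}} \R_i$.'' It is not the statement (which you seem to assume) that every $\langle\d\rangle$-regular partition of $G$ refines $\L_i$ and $\R_i$ on both sides. As the paper explains, this asymmetry cannot be removed: since the $\L_i$'s grow at wowzer rate, an unconditional two-sided version would give a wowzer lower bound for \emph{graph} regularity, contradicting the known tower-type upper bound. Consequently, what the double application of Lemma~\ref{theo:core} actually yields is only the implication of Lemma~\ref{lemma:ind-k}: if $\Z_3 \prec_{2^{-9}} \V^i_3$ \emph{and} $\Z_2 \prec_{2^{-9}} \V^i_2$, then $\Z_1 \prec_{2^{-9}} \V^{i+1}_1$. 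Your induction, as phrased -- ``if $\Z$ does not refine level $j$, find a witness and contradict regularity, hence $\Z$ refines every level'' -- does not follow from this conditional; you cannot get the induction off the ground because the hypothesis itself requires the refinement you are trying to establish on two of the three sides, and a single $3$-partite copy of the construction cannot provide it.

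The paper's fix is the ``circle of implications'' trick. It introduces six vertex classes $\Vside^0,\ldots,\Vside^5$, places a copy $H_e$ of the one-sided construction on each edge $e=\{x,x+1,x+2\}$ of the tight $6$-cycle (indices mod $6$), and sets $H = \bigcup_e H_e$. For any $\langle 2^{-73}\rangle$-regular partition refining the coarse $\V_0$, one defines $\b(h)$ as the deepest level that $\Z_h$ approximately refines, picks $x$ achieving the minimum $\b^*=\b(x)$, and applies the one-sided implication (\ref{eq:paste-property}) at the edge $\{x,x+1,x+2\}$: since $\b(x{+}1),\b(x{+}2) \ge \b^*$, both premises hold at level $\b^*$, forcing $\Z_x$ to refine level $\b^*+1$ -- contradicting the maximality of $\b(x)$, unless $\b^*$ is already at the top. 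This is an essential additional idea not present in your proposal; without it, or some equivalent device for bootstrapping the one-sided implication into a two-sided conclusion, the argument does not close. (A secondary point worth noting: in the second application of Lemma~\ref{theo:core} the $\L_i$'s are not arbitrary ``inner'' partitions inserted into blocks of $V_2\times V_3$, but a carefully chosen wowzer-indexed subsequence $\G^{(j)}=\G^{\w^*(j)}$ of the output of the \emph{first} application; the interaction between $\w$ and $\w^*$ is exactly where the $\Ack_3$ growth is created, and the link between the two applications passes through Claim~\ref{claim:uniform-refinement}, which converts approximate refinement of $\E_3$ into $\langle 3\d\rangle$-regularity of $\Z_1\cup\Z_2$ for some $G$ in the inner partition.)
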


Let us draw the reader's attention to an important and perhaps surprising aspect of Theorem~\ref{theo:main}.
All the known tower-type lower bounds for graph regularity depend on the error parameter $\epsilon$,
that is, they show the existence of graphs $G$ with the property that every $\epsilon$-regular partition of $G$ is of order at least $\Ack_2(\poly(1/\e))$.
This should be contrasted with the fact that our lower bounds for $\langle \d \rangle$-regularity holds for a {\em fixed} error parameter $\delta$.
Indeed, instead of the dependence on the error parameter, our lower bound depends on the {\em density} of the graph.
This delicate difference makes it possible for us to prove Theorem~\ref{theo:main} by iterating the construction described in the next subsection.

\subsection{The core construction and proof overview}\label{subsec:overview}

The graph construction in Lemma~\ref{theo:core} below is the main technical result we will need in order to prove Theorem~\ref{theo:main}.
We will first need to define ``approximate'' refinement (a notion that goes back to Gowers~\cite{Gowers97}).
\begin{definition}[Approximate refinements]
For sets $S,T$ we write $S \sub_\b T$ if $|S \sm T| < \b|S|$.
For a partition $\P$ we write $S \in_\b \P$ if $S \sub_\b P$ for some $P \in \P$.
For partitions $\P,\Q$ of the same set of size $n$ we write $\Q \prec_\b \P$ if
$$\sum_{\substack{Q \in \Q\colon\\Q \notin_\b \P}} |Q| \le \b n \;.$$
\end{definition}
Note that for $\Q$ equitable, $\Q \prec_\b \P$ if and only if
all but at most $\b|\Q|$ parts $Q \in \Q$ satisfy $Q \in_\b \P$.
We note that throughout the paper we will only use approximate refinements with $\b \le 1/2$, and so if $S \in_\b \P$ then $S \sub_\b P$ for a unique $P \in \P$.

We stress that in Lemma~\ref{theo:core} below we only use notions related to graphs. In particular, $\langle \d \rangle$-regularity refers to Definition~\ref{def:star-regular}.

\begin{lemma}[Core construction]\label{theo:core}
Let $\Lside$ and $\Rside$ be disjoint sets. Let
$\L_1 \succ \cdots \succ \L_s$ and $\R_1 \succ \cdots \succ \R_s$ be two sequences of $s$ successively refined equipartitions of $\Lside$ and $\Rside$, respectively,
that satisfy for  every $i \ge 1$ that:
\begin{enumerate}
\item\label{item:core-minR}
$|\R_i|$ is a power of $2$ and $|\R_1| \ge 2^{200}$,
\item\label{item:core-expR} $|\R_{i+1}| \ge 4|\R_i|$  if $i < s$,
\item\label{item:core-expL} $|\L_i| = 2^{|\R_i|/2^{i+10}}$.
\end{enumerate}
Then there exists a sequence of $s$ successively refined edge equipartitions $\G_1 \succ \cdots \succ \G_s$ of $\Lside \times \Rside$ such that for every $1 \le j \le s$, $|\G_j|=2^j$, and the following holds for every $G \in \G_j$ and $\d \le 2^{-20}$.
For every $\langle \d \rangle$-regular partition $\P \cup \Q$ of $G$, where $\P$ and $\Q$ are partitions of $\Lside$ and $\Rside$, respectively, and every $1 \le i \le j$, if $\Q \prec_{2^{-9}} \R_{i}$ then $\P \prec_{\g} \L_{i}$ with $\g = \max\{2^{5}\sqrt{\d},\, 32/\sqrt[6]{|\R_1|} \}$.
\end{lemma}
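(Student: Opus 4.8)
\textbf{Plan of proof for Lemma~\ref{theo:core}.}

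The plan is to build the graphs $\G_j$ by a Gowers-type recursive construction, where each refinement step roughly doubles the number of edge-parts and, at stage $j$, encodes the information needed to force the vertex partition $\P$ to refine $\L_j$ whenever $\Q$ refines $\R_j$. First I would set up the construction: think of $\G_j$ as a two-coloring (or $2^j$-coloring) of the complete bipartite graph $\Lside \times \Rside$, where the color of an edge $(x,y)$ with $x \in \Lside$, $y \in \Rside$ depends on (a) which part of $\L_{j-1}$ the vertex $x$ lies in, refined according to $\L_j$, and (b) which part of $\R_{j-1}$ the vertex $y$ lies in, refined according to $\R_j$. Concretely, since $|\L_j|/|\L_{j-1}| = 2^{|\R_j|/2^{j+10}}/2^{|\R_{j-1}|/2^{j+9}}$ and $|\R_j|/|\R_{j-1}| \ge 4$, there is a lot of ``room'': each $\L_{j-1}$-part splits into enormously many $\L_j$-parts, while each $\R_{j-1}$-part splits into at least four $\R_j$-parts. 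The idea is to use the $\R$-coordinate to index, within each $\L_{j-1}$-part, a balanced bisection of its $\L_j$-subparts — i.e.\ the densities $d_G(x\text{'s }\L_j\text{-part}, y\text{'s }\R_j\text{-part})$ are arranged so that learning these densities pins down $x$'s $\L_j$-part. This is where condition~\eqref{item:core-expL} is used: the doubly-exponential gap $|\L_i| = 2^{|\R_i|/2^{i+10}}$ is exactly what lets the $|\R_i|$-many bits available on the right side address all $\log_2 |\L_i|$ bits distinguishing $\L_i$-subparts of an $\L_{i-1}$-part.

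Next I would run the ``forcing'' argument, by induction on $i$ from $1$ to $j$. Suppose $\P \cup \Q$ is a $\langle\d\rangle$-regular partition of $G \in \G_j$ and $\Q \prec_{2^{-9}} \R_i$; we want $\P \prec_\g \L_i$. The base case $i=1$ and the inductive step are structurally the same: assuming $\P \prec_{\g} \L_{i-1}$ (vacuous for $i=1$, taking $\L_0$ to be the trivial partition), we must upgrade to $\P \prec_{\g} \L_i$. Take a part $P \in \P$ that (essentially) sits inside a part $L \in \L_{i-1}$; since $\Q$ approximately refines $\R_i$, most parts $Q \in \Q$ sit inside some $\R_i$-part. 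Here is the crux: $\langle\d\rangle$-regularity of $G$ on $(P,Q)$ says that on every pair of large subsets $P' \sub P$, $Q' \sub Q$, the density does not drop below half of $d_G(P,Q)$. If $P$ were split between two different $\L_i$-subparts $L', L''$ of $L$ on which — by construction — the densities toward $Q$'s $\R_i$-part are, say, $d$ and $0$ (or $d$ and $d/4$), then taking $P' = P \cap L''$ and $Q' = Q$ would witness a density drop well below $\tfrac12 d_G(P,Q)$, contradicting $\langle\d\rangle$-regularity — provided $|P \cap L''| \ge \d|P|$. So $P$ must be $\d$-almost contained in a single $\L_i$-subpart. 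Summing the small ``bad'' pieces (those $P$ not inside an $\L_{i-1}$-part, plus the $\d$-fraction of each remaining $P$ spilling out of its $\L_i$-subpart, plus the contribution of the $2^{-9}$-fraction of bad $Q$'s, which can also be charged against $|\R_1|^{-1/6}$ via condition~\eqref{item:core-minR}) yields $\P \prec_\g \L_i$ with $\g = \max\{2^5\sqrt\d,\ 32/\sqrt[6]{|\R_1|}\}$. The powers-of-two and the $|\R_{i+1}| \ge 4|\R_i|$ conditions are what make the bisections clean and keep the densities bounded away from each other by constant factors through all $s$ levels, so the $\tfrac12$ in the definition of $\langle\d\rangle$-regularity has enough slack. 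That $\d \le 2^{-20}$ gives $\g < 1$, so the approximate-refinement bookkeeping (uniqueness of the containing part for $\b \le 1/2$) applies.

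I expect the main obstacle to be making the densities consistent across all $s$ levels simultaneously while keeping them bounded below by a constant (so density $\ge 2^{-O(s)}$ overall is achievable, as needed for Theorem~\ref{theo:main}, but the \emph{local} density contrast at each level stays constant). The naive scheme would halve densities at each level, giving exponentially small densities and no slack for the $\tfrac12$ factor; the fix is to reuse a fixed palette of constant densities (e.g.\ recycling the values $\{d, d/4\}$, or using that after ``zeroing out'' one part of a bisection the surviving part regains full density) so that irregularity witnesses always exhibit a constant-factor density gap regardless of $i$. Getting the exact constants ($2^5$, $32$, the exponents $1/2$ and $1/6$, the threshold $2^{-20}$) to line up is the delicate part, but it is a finite bookkeeping exercise once the scheme is fixed; the conceptual content is the recursive bisection construction and the single-pair $\langle\d\rangle$-regularity witness above.
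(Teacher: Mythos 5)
The paper itself does not contain a proof of Lemma~\ref{theo:core}: it explicitly defers it to the companion paper~\cite{MSk} (``We refer the reader to~\cite{MSk} for the proof of Lemma~\ref{theo:core}''). So there is no in-paper argument to compare against, and your plan can only be assessed on its own terms. At the level of general strategy — a Gowers-style recursive bisection of $\Lside\times\Rside$ in which the density profile of a vertex $x$ toward $\R_i$-parts encodes $x$'s $\L_i$-part, extracted via a one-sided density-drop witness — your sketch is in the right spirit and correctly isolates the feature that makes $\langle\d\rangle$-regularity usable here: a single restriction $(P\cap L'',\,Q)$ with a constant-factor density gap already yields a violation.

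That said, the sketch has real gaps, and they are the parts that do all the work. (1) You never construct the encoding. Saying ``use the $\R$-coordinate to index a balanced bisection'' leaves open the central question: how to arrange the colors so that for (almost) every pair $L'\neq L''$ of $\L_i$-parts inside a common $\L_{i-1}$-part, and for a large fraction of the $\R_i$-parts, the densities $d(L',R)$ and $d(L'',R)$ differ by a constant factor — and this must hold \emph{simultaneously} at all $s$ levels with a single graph $G\in\G_j$. This is the pseudorandomness/counting heart of any Gowers-type construction, and it is absent. (2) Your witness argument picks a single $Q$ and assumes the density contrast holds there, but the contrast only holds for some $\R_i$-parts; one needs an averaging argument over $Q$'s. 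Moreover, a $\langle\d\rangle$-regular \emph{partition} permits modifying a $\d$-fraction of $e(G)$ globally, so a single-pair witness cannot be used directly; the error-accounting across all pairs $(P,Q)$ is what produces the $\sqrt\d$ in $\g$ (a hallmark of a second-moment/Cauchy--Schwarz step), and this chain is missing. (3) Your acknowledged ``main obstacle'' — keeping a constant-factor local contrast through $s$ recursive levels while the global density drops to $2^{-s}$ — is left as an intention (``reuse a fixed palette of constant densities'') rather than a scheme, and it is exactly the place where a naive bisection fails. Until these three points are supplied, the proposal is a plausible research plan but not a proof.
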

\begin{remark}\label{remequi}
Every $G \in \G_j$ is a bipartite graph of density $2^{-j}$ since $\G_j$ is equitable.
\end{remark}

As mentioned before, the proof of Lemma~\ref{theo:core} appears in~\cite{MSk}.
Let us end this section by explaining the role Lemma~\ref{theo:core} plays in the proof of Theorem \ref{theo:main}.

\paragraph{Using graphs to construct $3$-graphs:}
Perhaps the most surprising aspect of the proof of Theorem~\ref{theo:main} is that in order to construct a $3$-graph we also use the graph construction of Lemma~\ref{theo:core} in a somewhat unexpected way.
In this case, $\Lside$ will be a complete bipartite graph and the $\L_i$'s will be partitions of this complete bipartite graph themselves given by another application of Lemma~\ref{theo:core}.
The partitions will be of wowzer-type growth, and the second application of Lemma~\ref{theo:core} will ``multiply'' the graph partitions (given by the $\L_i$'s) to
give a partition of the complete $3$-partite $3$-graph into $3$-graphs that are hard for $\langle \d \rangle$-regularity.
We will take $H$ in Theorem~\ref{theo:main} to be an arbitrary $3$-graph in this partition.


\paragraph{Why is Lemma~\ref{theo:core} one-sided?}
As is evident from the statement of Lemma~\ref{theo:core}, it is one-sided in nature; that is, under the premise that the partition $\Q$
refines $\R_i$ we may conclude that $\P$ refines $\L_i$.
It is natural to ask if one can do away with this assumption, that is, be able to show that
under the same assumptions $\Q$ refines $\R_i$ and $\P$ refines $\L_i$.
As we mentioned in the previous item, in order to prove a wowzer-type lower bound for $3$-graph
regularity we have to apply Lemma~\ref{theo:core} with a sequence of partitions that grows as a wowzer-type function.
Now, in this setting, Lemma~\ref{theo:core} does not hold without the one-sided assumption, because if it did, then
one would have been able to prove a wowzer-type lower bound for graph $\langle \d \rangle$-regularity, and hence also for Szemer\'edi's regularity lemma.
Put differently, if one wishes to have a construction that holds with arbitrarily fast growing partition sizes, then
one has to introduce the one-sided assumption.

\paragraph{How do we remove the one-sided assumption?}
The proof of Theorem \ref{theo:main} proceeds by first proving a one-sided version of Theorem \ref{theo:main},
stated as Lemma~\ref{lemma:ind-k}. In order to get a construction that does not require such a one-sided assumption,
we will need one final trick; we will take $6$ clusters of
vertices and arrange $6$ copies
of this one-sided construction
along the $3$-edges of a cycle. This will give us a ``circle of implications'' that will eliminate the one-sided assumption.
See Subsection \ref{subsec:pasting}.

\section{Proof of Theorem~\ref{theo:main}}\label{sec:LB}

\renewcommand{\k}{r}

\renewcommand{\t}{t}
\newcommand{\w}{w}

\newcommand{\GG}{\mathbf{G}}
\newcommand{\FF}{\mathbf{F}}
\newcommand{\VV}{\mathbf{V}}

\renewcommand{\Hy}[1]{H_{{#1}}}
\renewcommand{\A}{A}

\newcommand{\subs}{\subset_*}
\newcommand{\pad}{P}
\renewcommand{\K}{\mathcal{K}}
\newcommand{\U}{U}

\renewcommand{\k}{k}

\renewcommand{\K}{\mathcal{K}}

\renewcommand{\r}{k}

The purpose of this section is to prove the main result, Theorem~\ref{theo:main}.
This section is self-contained save for the application of Lemma~\ref{theo:core}.
The key step of the proof, stated as Lemma~\ref{lemma:ind-k} and proved in Subsection \ref{subsec:key}, relies on a subtle construction that uses Lemma~\ref{theo:core} twice. This lemma only gives a ``one-sided'' lower bound for $3$-graph regularity, in the spirit of Lemma~\ref{theo:core}.
In Subsection~\ref{subsec:pasting} we show how to use Lemma~\ref{lemma:ind-k} in order to complete the proof of Theorem~\ref{theo:main}.

We first observe a simple yet crucial property of $2$-partitions, stated as Claim~\ref{claim:uniform-refinement} below, which we will need later.
This property relates $\d$-refinements of partitions and  $\langle \d \rangle$-regularity of partitions,
and relies
on Claim~\ref{claim:refinement-union}. Here, as well as in the rest of this section, we will use
the definitions and notations introduced in Section \ref{sec:define}.
In particular, recall that if a vertex partition $\Z$ of vertex classes $(V_1,V_2,V_3)$ satisfies $\Z \prec \{V_1,V_2,V_3\}$,
then for every $1 \le i \le 3$ we denote $\Z_i = \{Z \in \Z \,\vert\, Z \sub V_i\}$.
Moreover, if a $2$-partition $(\Z,\E)$, satisfies $\Z \prec \{V_1,V_2,V_3\}$ we denote $\E_i = \{E \in \E \,\vert\, E \sub V_j \times V_k\}$ where $\{i,j,k\}=\{1,2,3\}$. We will first need the following easy claim regarding the union of $\langle \d\rangle$-regular graphs.

\begin{claim}\label{claim:star-union}
	Let $G_1,\ldots,G_\ell$ be mutually edge-disjoint bipartite graphs on the same vertex classes $(Z,Z')$.
	If every $G_i$ is $\langle \d \rangle$-regular then $G=\bigcup_{i=1}^\ell G_i$ is also $\langle \d \rangle$-regular.
\end{claim}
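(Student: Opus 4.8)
The plan is to prove Claim~\ref{claim:star-union} directly from Definition~\ref{def:star-regular}, reducing the statement about the union $G$ to the hypotheses on the individual $G_i$. Fix subsets $A' \sub Z$ and $B' \sub Z'$ with $|A'| \ge \d|Z|$ and $|B'| \ge \d|Z'|$; we must show $d_G(A',B') \ge \frac12 d_G(A,B)$ where for brevity I write $A=Z$, $B=Z'$. The point is that edge counts are additive over an edge-disjoint decomposition: since the $G_i$ are mutually edge-disjoint and all live on $(Z,Z')$, we have $e_G(X,Y) = \sum_{i=1}^\ell e_{G_i}(X,Y)$ for any $X \sub Z$, $Y \sub Z'$, and dividing by $|X||Y|$ gives $d_G(X,Y) = \sum_{i=1}^\ell d_{G_i}(X,Y)$.

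First I would apply this additivity with $(X,Y) = (A',B')$ to get $d_G(A',B') = \sum_i d_{G_i}(A',B')$. Then, for each $i$, since $G_i$ is $\langle\d\rangle$-regular and $A',B'$ meet the size thresholds (the thresholds only depend on $|Z|,|Z'|$, which are the same for every $G_i$ and for $G$), Definition~\ref{def:star-regular} gives $d_{G_i}(A',B') \ge \frac12 d_{G_i}(A,B)$. Summing over $i$ yields
\[
d_G(A',B') = \sum_{i=1}^\ell d_{G_i}(A',B') \ge \frac12 \sum_{i=1}^\ell d_{G_i}(A,B) = \frac12 d_G(A,B),
\]
where the last equality is additivity applied with $(X,Y)=(A,B)$. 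This is exactly the defining inequality for $G$ to be $\langle\d\rangle$-regular, so we are done.

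I do not anticipate a genuine obstacle here — the claim is essentially the observation that the $\langle\d\rangle$-regularity inequality is preserved under summation of densities because the relevant size thresholds are unchanged. The only mild points to be careful about are: (i) confirming that "bipartite graph on vertex classes $(Z,Z')$" is meant to allow the $G_i$ to share the same ground sets $Z,Z'$ (so that the density thresholds $\d|Z|$, $\d|Z'|$ are literally the same for each $G_i$ and for $G$), which is consistent with how $\E$ is used in a $2$-partition; and (ii) handling the degenerate cases where $A'$ or $B'$ is empty or where some density is zero, which are vacuous or trivial. If one wanted to be fully careful about empty sets one would just note that the size hypotheses $|A'|\ge\d|Z|>0$, $|B'|\ge\d|Z'|>0$ (assuming $Z,Z'$ nonempty and $\d>0$) rule out division by zero.
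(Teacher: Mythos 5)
Your proof is correct and takes essentially the same route as the paper's: use additivity of edge counts (hence densities) over the edge-disjoint decomposition, apply the $\langle\d\rangle$-regularity inequality to each $G_i$, and sum. The paper's argument is just this same chain of equalities and one inequality written as a single displayed line, with the same justification that the thresholds $\d|Z|,\d|Z'|$ are shared across the $G_i$ and $G$.
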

\begin{proof}
	Let $S \sub Z$, $S' \sub Z'$ with $|S| \ge \d|Z|$, $|S'| \ge \d|Z'|$.
	Then
	$$d_G(S,S') = \frac{e_G(S,S')}{|S||S'|}
	= \sum_{i=1}^\ell \frac{e_{G_i}(S,S')}{|S||S'|}
	= \sum_{i=1}^\ell d_{G_i}(S,S')
	\ge \sum_{i=1}^\ell \frac12 d_{G_i}(Z,Z')
	= \frac12 d_{G}(Z,Z') \;,$$
	where the second and last equalities follow from the mutual disjointness of the $G_i$, and the inequality follows from the $\langle \d \rangle$-regularity of each $G_i$.
	Thus, $G$ is $\langle \d \rangle$-regular, as claimed.
\end{proof}

We use the following claim regarding approximate refinements.

\begin{claim}\label{claim:refinement-union}
	If $\Q \prec_\d \P$ then there exist $P \in \P$ and $Q$ that is a union of members of $\Q$ such that $|P \triangle Q| \le 3\d|P|$.
\end{claim}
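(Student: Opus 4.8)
The plan is to exhibit, for each part $P \in \P$, an explicit candidate union $Q_P$ of members of $\Q$, to bound the total symmetric difference $\sum_{P \in \P} |P \triangle Q_P|$ by $3\d n$, and then to extract a good $P$ by averaging.

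First I would set up notation. Since we only use $\d \le 1/2$, for every member $Q \in \Q$ with $Q \in_\d \P$ there is a \emph{unique} part $\phi(Q) \in \P$ with $Q \sub_\d \phi(Q)$, i.e.\ $|Q \setminus \phi(Q)| < \d|Q|$; call such a $Q$ \emph{good} and call the remaining members of $\Q$ \emph{bad}. By the definition of $\Q \prec_\d \P$, the bad members have total size at most $\d n$. For each $P \in \P$ define $Q_P := \bigcup\{Q \in \Q : Q \text{ good and } \phi(Q) = P\}$, which is a union of members of $\Q$; since $\phi$ is single-valued, the sets $Q_P$ ($P \in \P$) are pairwise disjoint, and their union is precisely the set of points lying in good members of $\Q$.

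Next I would bound the two halves of the symmetric differences separately. For the ``extra'' part, $|Q_P \setminus P| \le \sum_{Q : \phi(Q)=P} |Q \setminus \phi(Q)| < \d \sum_{Q : \phi(Q)=P}|Q| = \d |Q_P|$, and summing over $P$ and using disjointness of the $Q_P$ gives $\sum_{P} |Q_P \setminus P| \le \d n$. For the ``missing'' part, a point of $P \setminus Q_P$ either lies in a bad member of $\Q$, or lies in a good member $Q$ with $\phi(Q) \ne P$, in which case it lies in $Q \setminus \phi(Q)$; hence $\sum_{P} |P \setminus Q_P| \le \sum_{Q \text{ bad}} |Q| + \sum_{Q \text{ good}} |Q \setminus \phi(Q)| \le \d n + \d \sum_{Q \text{ good}}|Q| \le 2\d n$. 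Adding the two bounds, $\sum_{P \in \P} |P \triangle Q_P| \le 3\d n = 3\d \sum_{P \in \P}|P|$.

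Finally, by averaging there must exist some $P \in \P$ with $|P \triangle Q_P| \le 3\d|P|$, and taking $Q := Q_P$ completes the proof. There is no real obstacle here; the only points requiring a little care are the uniqueness of $\phi$ (which is exactly where the hypothesis $\d \le 1/2$ is used), the bookkeeping so that each ``missing'' or ``extra'' point is charged either to a term of the form $|Q \setminus \phi(Q)|$ or to a bad member of $\Q$, and the pairwise disjointness of the $Q_P$'s when summing.
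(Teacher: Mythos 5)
Your proof is correct and is essentially identical to the paper's: the paper defines $P_\Q = \bigcup\{Q \in \Q : Q \sub_\d P\}$, which under the convention $\d \le 1/2$ is exactly your $Q_P$, splits $\sum_P |P \triangle P_\Q|$ into the ``extra'' and ``missing'' sums, bounds the total by $3\d n$, and averages. The only cosmetic difference is that you make the map $\phi$ and the uniqueness argument explicit, whereas the paper relies on the standing convention $\b \le 1/2$ stated just before Lemma~\ref{theo:core}.
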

\begin{proof}
	For each $P\in \P$ let $\Q(P) = \{Q \in \Q \colon Q \sub_\d P\}$,
	and denote $P_\Q = \bigcup_{Q \in \Q(P)} Q$.
	We have
	\begin{align*}
	\sum_{P \in \P} |P \triangle P_\Q|
	&= \sum_{P \in \P} |P_\Q \sm P| + \sum_{P \in \P} |P \sm P_\Q|
	= \sum_{P \in \P} \sum_{\substack{Q \in \Q \colon\\Q \sub_\d P}} |Q \sm P|
	+ \sum_{P \in \P} \sum_{\substack{Q \in \Q \colon\\Q \nsubseteq_\d P}} |Q \cap P| \\
	&\le \sum_{P \in \P} \sum_{\substack{Q \in \Q \colon\\Q \sub_\d P}} \d|Q|
	+ \Big( \sum_{\substack{Q \in \Q\colon\\Q \notin_\d \P}} |Q|
	+ \sum_{\substack{Q \in \Q \colon\\Q \in_\d \P}} \d|Q| \Big)
	\le 3\d\sum_{Q \in \Q} |Q|
	= 3\d\sum_{P \in \P} |P| \;,
	\end{align*}
	where the last inequality uses the statement's assumption $\Q \prec_\d \P$ to bound the middle summand.
	By averaging, there exists $P \in \P$ such that $|P \triangle P_\Q| \le 3\d|P|$, thus completing the proof.
\end{proof}

The property of $2$-partitions that we need is as follows.
\begin{claim}\label{claim:uniform-refinement}
	Let $\P=(\Z,\E)$ be a $2$-partition with  $\Z \prec \{V_1,V_2,V_3\}$,
	and let $\G$ be a partition of $V_1\times V_2$ with $\E_3 \prec_\d \G$.
	If $(\Z,\E)$ is $\langle \d \rangle$-good
	then $\Z_1 \cup \Z_2$ is a $\langle 3\d \rangle$-regular partition of some $G \in \G$.
\end{claim}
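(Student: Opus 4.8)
The plan is to unwind the definitions and reduce everything to Claim~\ref{claim:refinement-union} and Claim~\ref{claim:star-union}. Recall that $\E_3$ is a partition of $V_1 \times V_2$ into $\langle\d\rangle$-regular bipartite graphs (this is what $\langle\d\rangle$-goodness of $(\Z,\E)$ gives us on the pair of classes $V_1,V_2$). We are told $\E_3 \prec_\d \G$, so by Claim~\ref{claim:refinement-union} there is a member $G \in \G$ and a union $G'$ of members of $\E_3$ with $|G \triangle G'| \le 3\d \cdot e(G)$, where $e(G) = |G|$ as an edge set. The idea is that $G'$ will witness the $\langle 3\d\rangle$-regularity of the partition $\Z_1 \cup \Z_2$ of $G$: passing from $G$ to $G'$ changes at most $3\d\, e(G)$ edges (the definition of a $\langle 3\d\rangle$-regular vertex partition allows exactly this), and $G'$, being a union of members of $\E_3$, has the structure we need.

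First I would check the bookkeeping on the edge change. The partition $\Z_1 \cup \Z_2$ is a vertex partition of the bipartite graph $G$ (with vertex classes $V_1, V_2$), and $\Z \prec \{V_1,V_2,V_3\}$ guarantees that every member of $\Z_1$ sits inside $V_1$ and every member of $\Z_2$ inside $V_2$, so the pairs $(A,B)$ with $A \ne B \in \Z_1\cup\Z_2$ that can carry edges of a subgraph of $V_1\times V_2$ are exactly those with $A \in \Z_1$, $B \in \Z_2$ (up to order). Second, I would verify that $G'$ is ``aligned'' with this partition: since each member $E \in \E_3$ is a bipartite graph between two distinct clusters $Z \in \Z_1$, $Z' \in \Z_2$ (again using $\Z \prec \{V_1,V_2,V_3\}$ and the definition of a $2$-partition), the restriction of $G'$ to any pair $(A,B)$ with $A \in \Z_1$, $B \in \Z_2$ is precisely the union of those $E \in \E_3$ that are contained in $A \times B$ and belong to $G'$. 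Third — and this is where Claim~\ref{claim:star-union} comes in — each such $E$ is $\langle\d\rangle$-regular by $\langle\d\rangle$-goodness, these $E$'s are mutually edge-disjoint, so their union (the bipartite graph induced by $G'$ on $(A,B)$) is $\langle\d\rangle$-regular, hence in particular $\langle 3\d\rangle$-regular. Thus after adding/removing the $\le 3\d\,e(G)$ edges in $G \triangle G'$ we obtain $G'$, whose induced bipartite graph on every pair of parts of $\Z_1 \cup \Z_2$ is $\langle 3\d\rangle$-regular; by Definition~\ref{def:star-regular} this says exactly that $\Z_1 \cup \Z_2$ is a $\langle 3\d\rangle$-regular partition of $G$.

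The main obstacle I anticipate is not any single hard inequality but getting the quantifiers in Claim~\ref{claim:refinement-union} to line up with what Definition~\ref{def:star-regular} demands: Claim~\ref{claim:refinement-union} produces a union $G'$ of members of $\E_3$ that is close to $G$ in \emph{symmetric difference}, whereas the definition of a $\langle\d\rangle$-regular vertex partition wants a graph obtained from $G$ by adding/removing a bounded number of edges — these are the same thing, but one must note that removing $G \setminus G'$ and adding $G' \setminus G$ is exactly a symmetric-difference modification, and the count $|G \triangle G'| = |G \setminus G'| + |G' \setminus G|$ is the total number of edges changed, which is $\le 3\d\, e(G)$ as required. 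A secondary point of care is that the definition of a $\langle\d\rangle$-regular vertex partition quantifies over \emph{all} pairs of parts; for pairs $(A,B)$ both in $\Z_1$ or both in $\Z_2$ the induced bipartite graph of $G'$ is empty, hence trivially $\langle 3\d\rangle$-regular, so only the ``crossing'' pairs matter and those are handled above. Everything else is a routine reorganization of the definitions.
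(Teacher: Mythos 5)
Your proof is correct and takes essentially the same route as the paper: apply Claim~\ref{claim:refinement-union} to produce a $G \in \G$ and a union $G'$ of members of $\E_3$ with $|G \triangle G'| \le 3\d |G|$, then observe via the $2$-partition structure and Claim~\ref{claim:star-union} that the induced bipartite graph of $G'$ on each pair $(Z_1,Z_2)$ with $Z_1 \in \Z_1$, $Z_2 \in \Z_2$ is $\langle\d\rangle$-regular, hence the partition is $\langle 3\d\rangle$-regular. The bookkeeping observations you flag (symmetric difference counts edge modifications, non-crossing pairs are vacuous) are the right sanity checks but are routine, exactly as you say.
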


\begin{proof}
	Put $\E=\E_3$.
	By Claim~\ref{claim:refinement-union}, since $\E \prec_\d \G$ there exist $G \in \G$ (a bipartite graph on $(V_1,V_2)$) and $G_\E$ that is a union of members of $\E$ (and thus also a bipartite graph on $(V_1,V_2)$) such that $|G \triangle G_\E| \le 3\d|G|$.
	Letting $Z_1 \in \Z_1$, $Z_2 \in \Z_2$, to complete the proof it suffices to show that the induced bipartite graph $G_\E[Z_1,Z_2]$ is $\langle \d \rangle$-regular (recall Definition~\ref{def:star-regular}).
	By Definition~\ref{def:2-partition}, $G_\E[Z_1,Z_2]$ is a union of bipartite graphs from $\E$ on $(Z_1,Z_2)$.
	Since every graph in $\E$ is $\langle \d \rangle$-regular by the statement's assumption that $(\Z,\E)$ is $\langle \d \rangle$-good (recall Definition~\ref{def:k-good}), we have that $G_\E[Z_1,Z_2]$ is a union of $\langle \d \rangle$-regular bipartite graphs on $(Z_1,Z_2)$.
	By Claim~\ref{claim:star-union}, $G_\E[Z_1,Z_2]$ is $\langle \d \rangle$-regular as well, thus completing the proof.
\end{proof}

We will later need the following easy (but slightly tedious to state) claim.
\begin{claim}\label{claim:restriction}
	Let $H$ be a $3$-partite $3$-graph on vertex classes $(V_1,V_2,V_3)$, and let $H'$ be the induced $3$-partite $3$-graph on vertex classes $(V_1',V_2',V_3')$ with $V_i' \sub V_i$ and $\a \cdot e(H)$ edges.
	If $(\Z,\E)$ is a $\langle \d \rangle$-regular partition of $H$ with
	$\Z \prec \bigcup_{i=1}^3 \{V_i,\,V_i \sm V_i'\}$
	then its restriction $(\Z',\E')$ to $V(H')$ is a $\langle \d/\a \rangle$-regular partition of $H'$.
\end{claim}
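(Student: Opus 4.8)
The plan is to unpack the definitions of $\langle\d\rangle$-regular partition (Definition~\ref{def:k-reg}) for both $H$ and $H'$, and to observe that the auxiliary graphs $G^i_{H'}$ are simply induced subgraphs of $G^i_H$, so that regularity of a partition of $G^i_H$ transfers to a partition of $G^i_{H'}$ at the cost of a loss proportional to the density drop. Concretely, fix $i\in\{1,2,3\}$ and write $\{i,j,k\}=\{1,2,3\}$. By Definition~\ref{def:aux}, $G^i_{H'}$ is the bipartite graph on $(V_j'\times V_k',\,V_i')$ obtained from $G^i_H$ (on $(V_j\times V_k,\,V_i)$) by restricting to those vertices $(v_j,v_k)\in V_j'\times V_k'$ and $v_i\in V_i'$ — note $e(G^i_H)=e(H)$ and $e(G^i_{H'})=e(H')=\a\,e(H)$. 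The hypothesis $\Z\prec\bigcup_{i=1}^3\{V_i,\,V_i\sm V_i'\}$ guarantees that $\Z$ refines $\{V_i,V_i\sm V_i'\}$ for each $i$, and hence that $\E$ (whose members live inside the complete bipartite graphs $Z\times Z'$) respects the split $V_j'\times V_k'$ versus its complement in $V_j\times V_k$; this is exactly what is needed for the restriction $(\Z',\E')$ to again be a legitimate $2$-partition on $V(H')$, with $\Z'_i=\{Z\in\Z_i:Z\sub V_i'\}$ and $\E'_i$ the members of $\E_i$ lying inside $V_j'\times V_k'$. The $\langle\d\rangle$-goodness of $(\Z',\E')$ is immediate since its bipartite graphs are a subcollection of those of $(\Z,\E)$, each already $\langle\d\rangle$-regular (and $\langle\d\rangle$-regularity of a bipartite graph is inherited by... no — rather, the graphs in $\E'$ are literally members of $\E$, so nothing to check).

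The core of the argument is then the following transfer step. By Definition~\ref{def:star-regular}, $\E_i\cup\Z_i$ being a $\langle\d\rangle$-regular partition of $G^i_H$ means one can add/remove at most $\d\cdot e(G^i_H)=\d\cdot e(H)$ edges of $G^i_H$ so that the bipartite graph induced on each pair of parts becomes $\langle\d\rangle$-regular. First I would restrict that same set of edge modifications to the vertex set of $G^i_{H'}$: it changes at most $\d\cdot e(H)$ edges there as well, and since the pairs of parts of $\E'_i\cup\Z'_i$ are (after the split respected by $\Z$) exactly those pairs of parts of $\E_i\cup\Z_i$ that lie inside $V(G^i_{H'})$, the induced bipartite graphs on them are $\langle\d\rangle$-regular after the modification. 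Finally, rewrite the bound on the number of modified edges in terms of $e(G^i_{H'})$: we have $\d\cdot e(H)=\d\cdot e(H')/\a=(\d/\a)\cdot e(G^i_{H'})$, so $\E'_i\cup\Z'_i$ is a $\langle\d/\a\rangle$-regular partition of $G^i_{H'}$. Since $i$ was arbitrary, Definition~\ref{def:k-reg} gives that $(\Z',\E')$ is a $\langle\d/\a\rangle$-regular partition of $H'$.

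I do not expect a serious obstacle here; the statement is essentially bookkeeping. The one point requiring a little care — and the place I would slow down — is verifying that the pairs of parts appearing in the restricted partition $\E'_i\cup\Z'_i$ of $G^i_{H'}$ really are a sub-multiset of the pairs of parts of $\E_i\cup\Z_i$ in $G^i_H$, i.e. that no pair of $G^i_{H'}$ gets "split" by passing to the bigger graph. This is precisely where the hypothesis $\Z\prec\bigcup_i\{V_i,V_i\sm V_i'\}$ is used: it forces each cluster of $\Z$ to lie wholly inside $V_i'$ or wholly outside, and correspondingly each $E\in\E$ to lie wholly inside $V_j'\times V_k'$ or to be disjoint from it, so that the one-sided vertex-class $V_j\times V_k$ of $G^i_H$ is partitioned by $\E_i$ in a way compatible with the sub-class $V_j'\times V_k'$. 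Once this compatibility is spelled out, the density-rescaling $\d\mapsto\d/\a$ and the conclusion follow immediately.
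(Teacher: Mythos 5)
Your proposal is correct and follows essentially the same argument as the paper: identify $G^i_{H'}$ as an induced subgraph of $G^i_H$ with $e(G^i_{H'})=\a\,e(G^i_H)$, note that the hypothesis $\Z\prec\bigcup_i\{V_i,V_i\sm V_i'\}$ makes the restriction a legitimate $2$-partition whose pairs of parts are a subfamily of those in $G^i_H$, restrict the same edge modifications, and rescale the bound $\d\,e(G^i_H)=(\d/\a)\,e(G^i_{H'})$. The paper's proof is terser but records exactly these steps.
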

\begin{proof}
	Recall Definition~\ref{def:k-reg}.
	Clearly, $(\Z',\E')$ is $\langle \d \rangle$-good. We will show that $\E'_1 \cup \Z'_1$ is a $\langle \d/\a \rangle$-regular partition of $G^1_{H'}$.
	The argument for $G^2_{H'}$, $G^3_{H'}$ will be analogous, hence the proof would follow.
	Observe that $G^1_{H'}$ is an induced subgraph of $G^1_{H}$, namely, $G^1_{H'} = G^i_{H}[V_2' \times V_3',\, V_1']$.
	By assumption, $e(H') = \a e(H)$, and thus $e(G^1_{H'}) = \a e(G^1_{H})$.
	By the statement's assumption on $\Z$ and since $\E_1 \cup \Z_1$ is a $\langle \d \rangle$-regular partition of $G^1_{H}$, we deduce---by adding/removing at most $\d e(G^1_{H}) =  (\d/\a)e(G^1_{H'})$ edges of $G^1_{H'}$---that $\E'_1 \cup \Z'_1$ is a $\langle \d/\a \rangle$-regular partition of $G_{H'}^1$.
	As explained above, this completes the proof.
\end{proof}

Finally, we will need the following claim regarding approximate refinements.
\begin{claim}\label{claim:refinement-size}
	If $\Q \prec_{1/2} \P$ and $\P$ is equitable then $|\Q| \ge \frac14|\P|$.
\end{claim}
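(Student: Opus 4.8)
\textbf{Proof proposal for Claim~\ref{claim:refinement-size}.}

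The plan is to run the elementary double-counting that underlies the definition of $\prec_{1/2}$, using equitability of $\P$ to convert a bound on vertex mass into a bound on the number of parts. First I would unpack the definition: $\Q \prec_{1/2} \P$ means that $\sum_{Q \in \Q \colon Q \notin_{1/2} \P} |Q| \le \tfrac12 n$, where $n$ is the size of the common underlying set. Hence the parts $Q \in \Q$ that \emph{do} satisfy $Q \in_{1/2} \P$ account for at least $n/2$ of the total mass, i.e.\ $\sum_{Q \in \Q \colon Q \in_{1/2} \P} |Q| \ge n/2$. Call this collection $\Q^* = \{Q \in \Q : Q \in_{1/2} \P\}$.

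Next I would bound the mass captured by each member of $\P$. For each $P \in \P$, consider the parts of $\Q^*$ that are $\tfrac12$-contained in $P$; since $\tfrac12 \le 1/2$ each such $Q$ is $\tfrac12$-contained in a \emph{unique} member of $\P$ (the uniqueness remark in the excerpt, valid for $\b \le 1/2$), so these collections are disjoint over $P \in \P$ and their union is all of $\Q^*$. If $Q \sub_{1/2} P$ then $|Q \cap P| > |Q|/2$, hence $|Q| < 2|Q\cap P| \le 2|P|$... actually more directly: summing over the $Q$'s assigned to a fixed $P$, $\sum_{Q \sub_{1/2} P} |Q| \le 2\sum_{Q \sub_{1/2} P} |Q \cap P| \le 2|P|$, since the $Q$'s are disjoint subsets mostly inside $P$. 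Now using equitability, $|P| = n/|\P|$ for every $P$, so each $P$ captures at most $2n/|\P|$ of the mass of $\Q^*$.

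Combining the two bounds: $n/2 \le \sum_{Q \in \Q^*} |Q| = \sum_{P \in \P} \sum_{Q \in \Q^*,\, Q \sub_{1/2} P} |Q| \le |\P| \cdot \tfrac{2n}{|\P|}$ — wait, that gives $|\Q^*|$ information only if I count parts, not mass. Let me recount: I want a lower bound on $|\Q|$, so I should instead observe that each $Q \in \Q^*$ has $|Q| \le |P| = n/|\P|$ for its host $P$ (since $Q$ is essentially inside $P$; precisely $|Q \cap P| \le |P|$ and $|Q| < 2|Q\cap P|$ gives $|Q| < 2n/|\P|$). Then $n/2 \le \sum_{Q \in \Q^*} |Q| \le |\Q^*| \cdot \tfrac{2n}{|\P|}$, so $|\Q| \ge |\Q^*| \ge |\P|/4$, as desired. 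The only mildly delicate point — the main (very minor) obstacle — is getting the containment bookkeeping exactly right, namely that $Q \sub_{1/2} P$ forces $|Q| < 2|P|$ via $|Q| < 2|Q\cap P| \le 2|P|$; everything else is a one-line average. I would write it up in two or three displayed inequalities.
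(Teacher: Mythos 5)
Your proof is correct. It differs in structure from the paper's: the paper constructs the auxiliary set $U^* = \bigcup_Q (Q \cap P_Q)$, shows $|U^*| \ge n/4$ via the mass bound, passes to the restricted partitions $\Q^* = \{Q \cap U^*\}$ and $\P^* = \{P \cap U^*\}$ of $U^*$, observes that $\Q^* \prec \P^*$ exactly, uses equitability to get $|\P^*| \ge |\P|/4$, and then chains $|\Q| \ge |\Q^*| \ge |\P^*| \ge |\P|/4$. You instead skip the intermediate set and partitions entirely: from the mass bound $\sum_{Q \in \Q^*} |Q| \ge n/2$ you directly bound each summand by $|Q| < 2|Q \cap P_Q| \le 2|P_Q| = 2n/|\P|$ (using equitability) and divide. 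Both arguments rest on the same two ingredients — the mass of the well-behaved parts and equitability — but yours is a one-shot averaging over $\Q^*$ rather than a detour through $\P^*$, and is arguably a line or two shorter; the paper's route makes the refinement relation $\Q^* \prec \P^*$ explicit, which is pleasant conceptually but not strictly needed for the counting. One cosmetic note: your write-up contains a false start (bounding $\sum_{Q\to P}|Q|\le 2|P|$ and then abandoning it), and briefly mis-states $|Q| \le |P|$ before correcting to $|Q| < 2|P|$; in a final version you should present only the corrected bound.
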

\begin{proof}
	We claim that the underlying set $U$ has a subset $U^*$ of size $|U^*|\ge \frac14|U|$ such that the partitions $\Q^*=\{Q \cap U^* \,\vert\, Q \in \Q \} \setminus \{\emptyset\}$ and $\P^*=\{P \cap U^* \,\vert\, P \in \P \} \setminus \{\emptyset\}$
	of $U^*$ satisfy $\Q^* \prec \P^*$.
	Indeed, let $U^* = \bigcup_{Q} Q \cap P_Q$ where the union is over all $Q \in \Q$ satisfying $Q \sub_{1/2} P_Q$ for a (unique) $P_Q \in \P$.
	As claimed, $|U^*| = \sum_{Q \in_{1/2} \P} |Q \cap P_Q| \ge \sum_{Q \in_{1/2} \P} \frac12|Q| \ge \frac14|U|$, using $\Q \prec_{1/2} \P$ for the last inequality.
	Now, since $\P$ is equitable, $|\P^*| \ge \frac14|\P|$.
	Thus, $|\Q| \ge |\Q^*| \ge |\P^*| \ge \frac14|\P|$, as desired.
\end{proof}

\renewcommand{\K}{k}
\renewcommand{\w}{w}

\subsection{$3$-graph key argument}\label{subsec:key}


We next introduce a few more definitions that are needed for the statement of Lemma \ref{lemma:ind-k}.
Let $e(i) = 2^{i+10}$. We define the following tower-type function $\t\colon\N\to\N$;
\begin{equation}\label{eq:t}
\t(i+1) = \begin{cases}
2^{\t(i)/e(i)}	&\text{if } i \ge 1\\
2^{250}	&\text{if } i = 0 \;.
\end{cases}
\end{equation}
It is easy to prove, by induction on $i$, that $\t(i) \ge e(i)\t(i-1)$ for $i \ge 2$ (for the induction step,
$t(i+1) \ge 2^{\t(i-1)} = t(i)^{e(i-1)}$, so $t(i+1)/e(i+1) \ge \t(i)^{e(i-1)-i-11} \ge \t(i)$).
This means that $t$ is monotone increasing, and that $\t$ is an integer power of $2$ (follows by induction as $t(i)/e(i) \ge 1$ is a positive power of $2$ and in particular an integer).
We record the following facts regarding $\t$ for later use:
\begin{equation}\label{eq:monotone}
\t(i) \ge 4\t(i-1)  \quad\text{ and }\quad
\text{ $\t(i)$ is a power of $2$} \;.
\end{equation}
For a function $f:\N\to\N$ with $f(i) \ge i$ we denote
\begin{equation}\label{eq:f*}
f^*(i) = \t\big(f(i)\big)/e(i) \;.
\end{equation}
Note that $f^*(i)$ is indeed a positive integer (by the monotonicity of $\t$, $f^*(i) \ge \t(i)/e(i)$ is a positive power of $2$).
In fact, $f^*(i) \ge f(i)$ (as $f^*(i) \ge 4^{f(i)}/e(i)$ using~(\ref{eq:monotone})).
We recursively define the function $\w\colon\N\to\N$ as follows;
\begin{equation}\label{eq:Ak}
\w(i+1) = \begin{cases}
\w^*(i)		&\text{if } i \ge 1\\
1		&\text{if } i = 0 \;.
\end{cases}
\end{equation}
It is evident that $\w$ is a wowzer-type function; in fact, one can check that:
\begin{equation}\label{eq:A_k}
\w(i) \ge \wow(i) \;.
\end{equation}




\begin{lemma}[Key argument]\label{lemma:ind-k}
	Let $s \in \N$, let $\Vside^1,\Vside^2,\Vside^3$ be mutually disjoint sets of equal size and let $\V^1 \succ\cdots\succ \V^m$ be a sequence of
	$m=\w^*(s)+1$ successive equitable refinements of $\{\Vside^1,\Vside^2,\Vside^3\}$
	with $|\V^i_1|=|\V^i_2|=|\V^i_3|=\t(i)$ for every\footnote{Since we assume that each $\V^i$ refines $\{\Vside^1,\Vside^2,\Vside^3\}$ then $\V^i_1$ is (by the notation mentioned before Claim \ref{claim:star-union}) the restriction of $\V^i$ to $\Vside^1$.} $1 \leq i \leq m$.
	Then there is a $3$-partite $3$-graph $H$ on $(\Vside^1,\Vside^2,\Vside^3)$ of density $d(H)=2^{-s}$ satisfying the following property:\\
	If $(\Z,\E)$ is a $\langle 2^{-70} \rangle$-regular partition of $H$ and for some
	$1 \le i \le \w(s)$ $(< m)$ we have $\Z_3 \prec_{2^{-9}} \V^i_3$ and $\Z_2 \prec_{2^{-9}} \V^i_2$ then we also have $\Z_1 \prec_{2^{-9}} \V^{i+1}_1$.
\end{lemma}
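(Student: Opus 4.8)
The plan is to construct $H$ by applying Lemma~\ref{theo:core} twice, in a nested fashion, exactly as foreshadowed in Subsection~\ref{subsec:overview}. First I would use the ``inner'' application of Lemma~\ref{theo:core} with $\Lside=\Vside^1$ and $\Rside=\Vside^2$: the hypotheses on sizes are arranged so that the sequence $\V^1_2 \succ \cdots$ plays the role of $\R_1 \succ \cdots$ (this is why $|\V^i_2|=\t(i)$, which by~(\ref{eq:monotone}) is a power of $2$ with $\t(i)\ge 4\t(i-1)$) and the sequence $\V^1_1 \succ \cdots$ plays the role of $\L_1 \succ \cdots$ (this is why $|\V^i_1|=\t(i)=2^{\t(i)/e(i)}$ when reindexed via $\t(i+1)=2^{\t(i)/e(i)}$ and $e(i)=2^{i+10}$ — matching condition~\eqref{item:core-expL} with the $2^{i+10}$ exponent). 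This produces a refinement sequence $\G_1 \succ \cdots \succ \G_s$ of the complete bipartite graph $\Vside^1 \times \Vside^2$ with $|\G_j|=2^j$. Then I would use the ``outer'' application: take $\Lside$ to now be the edge set $\Vside^1 \times \Vside^2$ (a complete bipartite graph), with the $\L_i$'s being exactly the partitions $\G_i$ just produced, and $\Rside=\Vside^3$ with the $\R_i$'s being $\V^1_3 \succ \cdots$. This second application yields a refinement sequence of $(\Vside^1\times\Vside^2)\times\Vside^3$ — equivalently, a partition of the complete $3$-partite $3$-graph on $(\Vside^1,\Vside^2,\Vside^3)$ — into $3$-graphs; I would let $H$ be an arbitrary member of the $s$-th such partition, so that $d(H)=2^{-s}$ by Remark~\ref{remequi}.

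\textbf{The deduction of the one-sided conclusion.} Now suppose $(\Z,\E)$ is a $\langle 2^{-70}\rangle$-regular partition of $H$ with $\Z_3 \prec_{2^{-9}} \V^i_3$ and $\Z_2 \prec_{2^{-9}} \V^i_2$. The goal is to extract a $\langle\d\rangle$-regular partition of a \emph{graph} from this $3$-graph regularity data, so that the conclusion of the (outer, then inner) Lemma~\ref{theo:core} can be invoked. The mechanism is the auxiliary-graph machinery: by Definition~\ref{def:k-reg}, $\E_3 \cup \Z_3$ — wait, more precisely, the relevant auxiliary graph $G_H^3$ on $(\Vside^1\times\Vside^2,\,\Vside^3)$ has $\E_3 \cup \Z_3$ as a $\langle 2^{-70}\rangle$-regular partition, and since $H$ was built as a member of a partition of the complete $3$-graph coming from the outer Lemma~\ref{theo:core}, this $G_H^3$ is (a member of) $\G^{\text{outer}}_s$. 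Applying Claim~\ref{claim:uniform-refinement}-type reasoning together with the $\langle\d\rangle$-good hypothesis, the pair $\Z_{\Lside} \cup \Z_3$ — where $\Z_\Lside$ is the partition of the edge set $\Vside^1\times\Vside^2$ induced by $\E_3$ (via Claim~\ref{claim:refinement-union}, up to a $3\d$ loss) — is a $\langle 3\cdot 2^{-70}\rangle$-regular partition of this graph. Feeding in $\Z_3 \prec_{2^{-9}} \V^i_3$, the outer Lemma~\ref{theo:core} (with $\d$-parameter comfortably below $2^{-20}$ and $\g = \max\{2^5\sqrt{3\cdot2^{-70}},\,32/\sqrt[6]{\t(1)}\}$, which one checks is below $2^{-9}$ given $\t(1)=\t(1)\ge 2^{250}$) gives $\Z_\Lside \prec_{2^{-9}} \G_i$, i.e., $\E_3$ approximately refines the $i$-th inner graph partition. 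But $\E_3 \cup \Z_3$ is also — now reading off the \emph{other} structural constraint — one should instead look at $G_H^3$ from the $\Lside$-side: the partition of $\Vside^1\times\Vside^2$ we obtained approximately refines $\G_i$; now invoke the \emph{inner} Lemma~\ref{theo:core} with its conclusion: a $\langle\d\rangle$-regular partition $\P\cup\Q$ of a member of $\G_i$ with $\Q \prec_{2^{-9}} \V^i_2$ forces $\P \prec_{\g'} \V^{i+1}_1$ — and here $\P$ is (the trace on $\Vside^1$ of) $\Z_1$, while $\Q$ is (the trace on $\Vside^2$ of) $\Z_2$, and the hypothesis $\Z_2 \prec_{2^{-9}} \V^i_2$ supplies exactly $\Q \prec_{2^{-9}} \V^i_2$. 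This yields $\Z_1 \prec_{2^{-9}} \V^{i+1}_1$, which is the desired conclusion.

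\textbf{Bookkeeping and the error budget.} The routine-but-delicate part is the arithmetic of the constants: one must track how the $\langle 2^{-70}\rangle$-regularity degrades through (a) the passage to $G_H^3$ (Definition~\ref{def:k-reg} preserves it), (b) Claim~\ref{claim:uniform-refinement}'s factor $3$, (c) possibly Claim~\ref{claim:restriction}'s division by a density $\a$ when restricting to a sub-$3$-graph to account for the fact that $\Z$ refines $\V^i$ only approximately (hence one first needs a density argument, paying a constant factor, to pass to an honest sub-$3$-graph where $\Z$ genuinely refines the relevant partition of the vertex classes — this is where $\a$ is bounded below by a constant using $\Z_2,\Z_3 \prec_{2^{-9}}\V^i_2,\V^i_3$), and (d) the two $\d$-to-$\g$ conversions inside Lemma~\ref{theo:core}, each introducing a $2^5\sqrt{\cdot}$ factor and a $32/\sqrt[6]{|\R_1|}$ floor. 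One checks at the end that starting from $2^{-70}$ and passing through all these, both the final $\g$ in the outer application and the final $\g'$ in the inner application stay at most $2^{-9}$; the base-partition sizes $\t(1)=2^{250}$ and $|\V^i_2|=\t(i)\ge 2^{250}$ are chosen large enough to make the $32/\sqrt[6]{\cdot}$ terms negligible. I expect the \textbf{main obstacle} to be precisely the combinatorial bookkeeping of which partition of which ground set (the vertex set $\Vside^1$, versus the edge set $\Vside^1\times\Vside^2$, versus $\Vside^3$) plays the role of $\Lside$ and $\Rside$ in each of the two nested invocations of Lemma~\ref{theo:core}, and ensuring the index shift $i \mapsto i+1$ is consumed by exactly one of the two applications (the inner one) rather than both — together with verifying that the approximate-refinement parameters line up so that the $\prec_{2^{-9}}$ hypotheses of one lemma are exactly the $\prec_{2^{-9}}$ conclusions available from the other, with all constant losses absorbed.
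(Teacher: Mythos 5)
Your high-level plan is the right one — two nested applications of Lemma~\ref{theo:core}, first with $\Lside=\Vside^1,\Rside=\Vside^2$ and then with $\Lside=\Vside^1\times\Vside^2,\Rside=\Vside^3$, taking $H$ from the last partition and working through $G_H^3$ via Definition~\ref{def:k-reg} and Claim~\ref{claim:uniform-refinement}. But there is a genuine gap in how you wire up the \emph{outer} application, and it is not a bookkeeping detail: as written your outer application of Lemma~\ref{theo:core} cannot have its hypotheses satisfied. You propose to feed the $\L_i$'s as $\G_i$ (of size $2^i$) and the $\R_i$'s as $\V^i_3$ (of size $\t(i)$). Condition~\ref{item:core-expL} of Lemma~\ref{theo:core} demands $|\L_i|=2^{|\R_i|/2^{i+10}}$, i.e.\ $2^i=2^{\t(i)/2^{i+10}}$, which fails catastrophically — $\t(i)/2^{i+10}\ge\t(i-1)\ge 2^{250}$ for $i\ge 2$, nowhere near $i$. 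Relatedly, you generate only $s$ inner partitions $\G_1,\dots,\G_s$, but the conclusion must cover every $1\le i\le \w(s)$, a wowzer-sized range; an $s$-long inner sequence has no hope of serving that many values of $i$.

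The missing idea is the \emph{subsequencing} that is the heart of the wowzer growth. The inner application must be run with $s'=\w^*(s)$ levels (using $\V^2_1\succ\cdots\succ\V^{s'+1}_1$ and $\V^1_2\succ\cdots\succ\V^{s'}_2$, note also the index offset on the $\Lside$-side which your sketch glosses over), producing $\G^1\succ\cdots\succ\G^{s'}$. Then, for the outer application, one feeds in only the subsequences $\G^{(j)}:=\G^{\w^*(j)}$ (size $2^{\w^*(j)}$) and $\V^{(j)}:=\V^{\w(j)}_3$ (size $\t(\w(j))$), $1\le j\le s$. It is exactly the recursive identity $\w^*(j)=\t(\w(j))/e(j)$ (from the definitions of $\w$ and $f^*$) that makes condition~\ref{item:core-expL} hold for this choice: $|\G^{(j)}|=2^{\w^*(j)}=2^{\t(\w(j))/e(j)}=2^{|\V^{(j)}|/e(j)}$. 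With that in place, the deduction is as you intend: given $i\le\w(s)$, pick $j'$ with $\w(j')\le i<\w^*(j')$; $\Z_3\prec_{2^{-9}}\V^i_3\prec\V^{(j')}$ lets the outer proposition give $\E_3\prec_{2^{-30}}\G^{(j')}$; Claim~\ref{claim:uniform-refinement} upgrades this (together with $\langle 2^{-70}\rangle$-goodness) to a $\langle 2^{-28}\rangle$-regular partition $\Z_1\cup\Z_2$ of some $G\in\G^{\w^*(j')}$; and the inner proposition — applicable precisely because $i\le\w^*(j')$ — converts $\Z_2\prec_{2^{-9}}\V^i_2$ into $\Z_1\prec_{2^{-9}}\V^{i+1}_1$. (Minor further corrections: the factor of $3$ from Claim~\ref{claim:uniform-refinement} multiplies the \emph{refinement} error, not the regularity parameter, so the intermediate claim is ``$\langle 3\cdot 2^{-30}\rangle=\langle 2^{-28}\rangle$-regular,'' not ``$\langle 3\cdot 2^{-70}\rangle$-regular''; and Claim~\ref{claim:restriction} plays no role in this lemma — it enters only in deducing Theorem~\ref{theo:main} from it.)
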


\begin{proof}
	%
	%
	%
	%
	Put $s':=\w^*(s)$, so that $m = s'+1$.
	Apply Lemma~\ref{theo:core} with
	$$\Lside=\Vside^1,\quad \Rside=\Vside^2 \quad\text{ and }\quad \V^2_1 \succ \cdots \succ \V^{s'+1}_1 ,\quad \V^1_2 \succ \cdots \succ \V^{s'}_2 \;,$$
	and let
	\begin{equation}\label{eq:main-k-colors}
	\G^1 \succ \cdots \succ \G^{s'} \quad\text{ with }\quad |G^\ell|=2^\ell \text{ for every } 1 \le \ell \le s'
	\end{equation}
	be the resulting sequence of $s'$ successively refined equipartitions of $\Vside^1 \times \Vside^2$.
	%
	
	\begin{prop}\label{prop:main-k-hypo}
		Let $1 \le \ell \le s'$ and $G \in \G^\ell$.
		For every $\langle 2^{-28} \rangle$-regular partition $\Z_1 \cup \Z_2$ of $G$ (where $\Z_1$ and $\Z_2$ are partitions of $\Vside^1$ and $\Vside^2$, respectively) and every $1 \le i \le \ell$,
		if $\Z_2 \prec_{2^{-9}} \V^i_2$
		then $\Z_1 \prec_{2^{-9}} \V^{i+1}_1$.
	\end{prop}
	\begin{proof}
		First we need to verify that we may apply Lemma~\ref{theo:core} as above.
		Assumptions~\ref{item:core-minR},~\ref{item:core-expR} in Lemma~\ref{theo:core} hold by~(\ref{eq:monotone}) and the fact that $|\V^j_2|=\t(j)$.
		Assumption~\ref{item:core-expL} is satisfied since for every $1 \le j \le s$ we have
		$$|\V^{j+1}_1| = \t(j+1) = 2^{\t(j)/e(j)} = 2^{|\V^{j}_2|/e(j)} \;,$$
		where the second equality uses the definition of the function $\t$ in~(\ref{eq:t}).
		We can thus use Lemma~\ref{theo:core} to infer that the fact that $\Z_2 \prec_{2^{-9}} \V^i_2$ implies that $\Z_1 \prec_x \V^{i+1}_1$ with $x=\max\{2^{5}\sqrt{2^{-28}},\, 32/\sqrt[6]{\t(1)} \} = 2^{-9}$, using~(\ref{eq:t}).
	\end{proof}
	
	
	For each $1 \le j \le s$ let
	\begin{equation}\label{eq:main-k-dfns}
	\G^{(j)} = \G^{\w^*(j)}
	\quad\text{ and }\quad
	\V^{(j)} = \V^{\w(j)}_3 \;.
	\end{equation}	
	All these choices are well defined since $\w^*(j)$ satisfies $1 \le \w^*(1) \le \w^*(j) \le \w^*(s) = s'$, and since $\w(j)$ satisfies $1 \le \w(1) \le \w(j) \le \w(s) \le m$. Observe that we have thus chosen two subsequences of $\G^1,\cdots,\G^{s'}$ and $\V^1_3,\ldots,\V^m_3$, each of length $s$.
	Recalling that each $\G^{(j)}$ is a partition of $\Vside^1 \times \Vside^2$, we now apply Lemma~\ref{theo:core} again with
	$$
	\Lside=\Vside^1 \times \Vside^2,\quad \Rside=\Vside^3 \quad\text{ and }\quad \G^{(1)} \succ \cdots \succ \G^{(s)}, \quad \V^{(1)} \succ \cdots \succ \V^{(s)} \;.
	$$
	The output of this application of Lemma~\ref{theo:core} consists of a sequence of $s$ (successively refined)
	equipartitions of $(\Vside^1 \times \Vside^2)\times\Vside^3$.
	We can think of the $s$-th partition of this sequence as a collection of $2^s$ bipartite graphs on vertex sets
	$(\Vside^1\times\Vside^{2},\,\Vside^3)$. For the rest of the proof let $G'$ be be any of these graphs.
	By Remark \ref{remequi} we have
	\begin{equation}\label{eq:ind-colors2}
	d(G')=2^{-s} \;.
	\end{equation}
	\begin{prop}\label{prop:ind-prop2}
		For every $\langle 2^{-70} \rangle$-regular partition $\E \cup \V$ of $G'$ (where $\E$ and $\V$ are partitions of $\Vside^1\times\Vside^{2}$ and $\Vside^3$ respectively)
		and every $1 \le j' \le s$,
		if $\V \prec_{2^{-9}} \V^{(j')}$ then $\E \prec_{2^{-30}} \G^{(j')}$.
	\end{prop}
	\begin{proof}
		First we need to verify that we may apply Lemma~\ref{theo:core} as above.
		Note that $|\G^{(j)}|=2^{\w^*(j)}$ by~(\ref{eq:main-k-colors}) and (\ref{eq:main-k-dfns}),
		and that $|\V^{(j)}|=\t(\w(j))$ by (\ref{eq:main-k-dfns}) and the statement's assumption that $|\V^i_3|=\t(i)$.
		Therefore,
		\begin{equation}\label{eq:main-k-orders}
		|\G^{(j)}| = 2^{\w^*(j)} = 2^{\t(\w(j))/e(j)} = 2^{|\V^{(j)}|/e(j)} \;,
		\end{equation}
		where the second equality relies on~(\ref{eq:f*}).
		Moreover, note that $\t(\w(1)) = \t(1) = 2^{300}$.
		Now, Assumptions~\ref{item:core-minR} and~\ref{item:core-expR} in Lemma~\ref{theo:core} follow from the fact that $|\V^{(j)}|=\t(\w(j))$, from~(\ref{eq:monotone})
		and the fact that $|\V^{(1)}| = \t(\w(1)) \ge 2^{200}$ by~(\ref{eq:Ak}). 	
		Assumption~\ref{item:core-expL} follows from~(\ref{eq:main-k-orders}).
		We can thus use Lemma~\ref{theo:core} to infer that the fact that $\V \prec_{2^{-9}} \V^{(j')}$ implies that
		$\E \prec_x \G^{(j')}$ with $x=\max\{2^{5}\sqrt{2^{-70}},\, 32/\sqrt[6]{\t(\w(1))} \} = 2^{-30}$.
	\end{proof}
	
	Let $H$ be the $3$-partite $3$-graph on vertex classes $(\Vside^1,\Vside^2,\Vside^3)$ with edge set
	$$
	E(H) = \big\{ (v_1,v_2,v_3) \,:\, ((v_1,v_{2}),v_3) \in E(G') \big\} \;,
	$$	
	and note that we have (recall Definition \ref{def:aux})
	\begin{equation}\label{eqH}
	G'=G_{H}^3\;.
	\end{equation}
	We now prove that $H$ satisfies the properties in the statement of the lemma.

	%
	First, note that by~(\ref{eq:ind-colors2}) and (\ref{eqH}) we have $d(H)=2^{-s}$, as needed.
	Assume now that $i$ is such that
	\begin{equation}\label{eq:ind-i-assumption}
	1 \le i \le \w(s)
	\end{equation}
	and:
	\begin{enumerate}
		\item\label{item:ind-reg}
		$(\Z,\E)$ is a $\langle 2^{-70} \rangle$-regular partition of $H$, and
		\item\label{item:ind-refine} $\Z_3 \prec_{2^{-9}} \V^i_3$ and $\Z_2 \prec_{2^{-9}} \V^i_2$.
	\end{enumerate}	
	We need to show that
	\begin{equation}\label{eq:ind-goal}
	\Z_1 \prec_{2^{-9}} \V^{i+1}_1 \;.
	\end{equation}
	Since Item~\ref{item:ind-reg} guarantees that $(\Z,\E)$ is a $\langle 2^{-70} \rangle$-regular partition of $H$,
	we get from Definition~\ref{def:k-reg} and (\ref{eqH}) that
	\begin{equation}\label{eq:ind-reg}
	\text{$\E_3 \cup \Z_3$ is a $\langle 2^{-70} \rangle$-regular partition of } G'.
	\end{equation}
	Let
	\begin{equation}\label{eq:ind-j'}
	1 \le j' \le s
	\end{equation}
	be the unique integer satisfying (the equality here is just (\ref{eq:Ak}))
	\begin{equation}\label{eq:ind-sandwich}
	\w(j') \le i < \w(j'+1) = \w^*(j')\;.
	\end{equation}
	Note that (\ref{eq:ind-j'}) holds due to~(\ref{eq:ind-i-assumption}).
	Recalling~(\ref{eq:main-k-dfns}),
	the lower bound in~(\ref{eq:ind-sandwich}) implies that $\V^i_3 \prec \V^{\w(j')} = \V^{(j')}$.
	Therefore, the assumption $\Z_3 \prec_{2^{-9}} \V^i_3$ in~\ref{item:ind-refine} implies that
	\begin{equation}\label{eq:ind-Zk}
	\Z_3 \prec_{2^{-9}} \V^{(j')} \;.
	\end{equation}
	Apply Proposition~\ref{prop:ind-prop2} on $G'$, using~(\ref{eq:ind-reg}),~(\ref{eq:ind-j'}) and~(\ref{eq:ind-Zk}), to deduce that
	\begin{equation}\label{eq:ind-E}
	\E_3 \prec_{2^{-30}} \G^{(j')} = \G^{\w^*(j')} \;,
	\end{equation}
	where for the equality again recall~(\ref{eq:main-k-dfns}).
	Since $(\Z,\E)$ is a $\langle 2^{-70} \rangle$-regular partition of $H$ (by Item~\ref{item:ind-reg} above)
	it is in particular $\langle 2^{-70} \rangle$-good. By~(\ref{eq:ind-E}) we may thus apply Claim~\ref{claim:uniform-refinement}
	to conclude that
	\begin{equation}\label{eq:ind-reg2}
	\Z_1 \cup \Z_2 \text{ is a }
	\langle 2^{-28} \rangle
	\text{-regular partition of some $G\in\G^{\w^*(j')}$.}
	\end{equation}
	By~(\ref{eq:ind-reg2}) we may apply Proposition~\ref{prop:main-k-hypo} with $G$, $\Z_1\cup\Z_2$, $\ell=\w^*(j')$ and $i$, observing (crucially)
	that $i \leq \ell$ by (\ref{eq:ind-sandwich}). We thus conclude that the fact $\Z_2 \prec_{2^{-9}} \V^i_2$ (stated in~\ref{item:ind-refine})
	implies that $\Z_1 \prec_{2^{-9}} \V^{i+1}_1$, thus proving~(\ref{eq:ind-goal}) and completing the proof.
\end{proof}

\subsection{Putting everything together}\label{subsec:pasting}


%
%
We can now prove our main theorem, Theorem~\ref{theo:main}, which we repeat here for convenience.
\addtocounter{theo}{-2}
\begin{theo}[Main theorem]
	Let $s \in \N$.
	There exists a $3$-partite $3$-graph $H$ on vertex classes of equal size and of density at least $2^{-s}$,
	and a partition $\V_0$ of $V(H)$ with $|\V_0| \le 2^{300}$, such that if
	$(\Z,\E)$ is a $\langle 2^{-73} \rangle$-regular partition of $H$ with
	$\Z \prec \V_0$ then $|\Z| \ge \wow(s)$.
\end{theo}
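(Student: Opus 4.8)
The statement of Theorem~\ref{theo:main} is obtained from the one-sided Lemma~\ref{lemma:ind-k} by the ``circle of implications'' trick announced in Subsection~\ref{subsec:overview}. The plan is as follows. First I would set up $6$ mutually disjoint vertex classes $\Vside^1,\dots,\Vside^6$ of equal size, think of them as arranged cyclically, and apply Lemma~\ref{lemma:ind-k} six times: once for each cyclically consecutive triple $(\Vside^{\ell},\Vside^{\ell+1},\Vside^{\ell+2})$ (indices mod $6$), with the appropriate relabeling of which class plays the role of $\Vside^1$ (the ``conclusion'' class) versus $\Vside^2,\Vside^3$ (the ``hypothesis'' classes). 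Each application produces a $3$-partite $3$-graph $H_\ell$ on its triple; take $H$ to be the (edge-disjoint) union of all six, which is a $3$-partite $3$-graph in the obvious sense once we note that the six triples of classes are distinct, and $H$ has density $\ge 2^{-s}$ since each $H_\ell$ does and the classes have equal size. For $\V_0$ I would take the common coarsest refinement level $\V^1$ of the six partition towers (all having $|\V^1_\ell| = \t(1) = 2^{250} \le 2^{300}$), so $|\V_0| = 6\cdot 2^{250} \le 2^{300}$, give or take the exact constant bookkeeping.

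The core argument is then: suppose $(\Z,\E)$ is a $\langle 2^{-73}\rangle$-regular partition of $H$ with $\Z \prec \V_0$, and suppose for contradiction that $|\Z| < \wow(s)$. Since $\Z$ refines $\V_0 = \V^1$, trivially $\Z_\ell \prec_{2^{-9}} \V^1_\ell$ for every $\ell$, so the ``approximate refinement level'' reached by $\Z$ on each class is at least $1$. Now I would run the circle of implications: from $\Z_{\ell+1} \prec_{2^{-9}} \V^i_{\ell+1}$ and $\Z_{\ell+2} \prec_{2^{-9}} \V^i_{\ell+2}$, Lemma~\ref{lemma:ind-k} applied to $H_\ell$ gives $\Z_\ell \prec_{2^{-9}} \V^{i+1}_\ell$ — PROVIDED $i \le \w(s)$. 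Chasing this around the cycle: if all six classes are refined to level $\ge i$, then after one full trip around we learn all six are refined to level $\ge i+1$; iterating, all six reach level $\w(s)$. One subtlety is that $\langle 2^{-73}\rangle$-regularity of $H$ does not immediately give $\langle 2^{-70}\rangle$-regularity of each restriction $(\Z,\E)\!\restriction\! V(H_\ell)$ as required by Lemma~\ref{lemma:ind-k}; here I would invoke Claim~\ref{claim:restriction} with $\a \ge 1/8$ (each $H_\ell$ carries at least a constant fraction of $e(H)$, since the six are edge-disjoint of equal density on equal classes), turning $\d = 2^{-73}$ into $\d/\a \le 2^{-70}$ — this is exactly why the theorem's constant $2^{-73}$ is smaller than the $2^{-70}$ in Lemma~\ref{lemma:ind-k}. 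One must also check the hypothesis $\Z \prec \bigcup_i\{V_i, V_i\setminus V_i'\}$ of Claim~\ref{claim:restriction}, which holds because $\Z$ refines $\V_0$ and hence respects the partition into the six classes.

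Having pushed all six classes to refinement level $\w(s)$, the contradiction comes from a counting argument: $\Z_\ell \prec_{2^{-9}} \V^{\w(s)}_\ell$ with $\V^{\w(s)}_\ell$ equitable of order $\t(\w(s))$, so by Claim~\ref{claim:refinement-size} (or rather Claim~\ref{claim:refinement-size} applied with $\b = 2^{-9} \le 1/2$) we get $|\Z_\ell| \ge \tfrac14 \t(\w(s))$, hence $|\Z| \ge \tfrac14\t(\w(s))$. Finally $\t(\w(s))$ is a wowzer-type quantity: since $\t(i) \ge 2^{\t(i-1)/e(i-1)}$ is tower-growing and $\w(s) \ge \wow(s)$ by~(\ref{eq:A_k}), one has $\tfrac14\t(\w(s)) \ge \wow(s)$ (with room to spare), contradicting $|\Z| < \wow(s)$. \textbf{The main obstacle} I anticipate is not any single step but the careful bookkeeping needed to make the circle close: one must verify that the monotone index $i$ never exceeds $\w(s)$ before all six classes have been driven up to that level — i.e.\ that the cyclic iteration genuinely increments a \emph{common} lower bound on the refinement level of all six classes simultaneously rather than getting stuck because one class lags — and that the six relabeled applications of Lemma~\ref{lemma:ind-k} are all legitimate (same $s$, compatible partition towers of the prescribed sizes $\t(i)$ on every class, length $m = \w^*(s)+1$). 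The constant-tracking ($2^{-73} \to 2^{-70} \to 2^{-9}$, and the $\le 2^{300}$ bound on $|\V_0|$) is routine but must be done honestly.
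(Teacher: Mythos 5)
Your proposal follows essentially the same route as the paper: take six disjoint classes arranged cyclically (the paper codifies this as the tight $3$-uniform $6$-cycle $B$), apply Lemma~\ref{lemma:ind-k} once to each cyclically consecutive triple, take the edge-disjoint union to get $H$, reduce the regularity parameter $2^{-73} \to 2^{-70}$ per restriction via Claim~\ref{claim:restriction}, chase the one-sided implication around the cycle, and finish with Claim~\ref{claim:refinement-size}. The one stylistic difference is how the cycle is ``closed'': you run a direct increasing induction (``all six classes at level $i$ implies all six at level $i+1$''), whereas the paper argues by contradiction, defining $\b(h)$ as the highest refinement level reached on each class, taking the class attaining the minimum $\b^*$, and deriving a contradiction from the one-sided implication at $i=\b^*$. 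Both close the cycle correctly; your worry that one class might ``lag'' does not arise in your formulation since the simultaneous update advances all six indices together.

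There is, however, a genuine gap in the density computation. You assert ``$H$ has density $\ge 2^{-s}$ since each $H_\ell$ does,'' but that is false as stated: $H$ is a $3$-partite $3$-graph on the doubled vertex classes $(\Vside^0\cup\Vside^3,\ \Vside^1\cup\Vside^4,\ \Vside^2\cup\Vside^5)$, so its edge density is $e(H)/(2n)^3 = \tfrac{6}{8}\cdot d(H_e)$, i.e.\ three quarters of the density of each piece, not equal to it. If you invoke Lemma~\ref{lemma:ind-k} with parameter $s$, you obtain $d(H)=\tfrac34\cdot 2^{-s}< 2^{-s}$, which fails the theorem's requirement. The fix (used in the paper) is to apply Lemma~\ref{lemma:ind-k} with parameter $s-1$, giving $d(H)=\tfrac68\cdot 2^{-(s-1)}=\tfrac32\cdot 2^{-s}\ge 2^{-s}$. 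This shift of course propagates: you then set $m=\w^*(s-1)+1$, $K=\w(s-1)+1$, and at the end need $\tfrac14\cdot 6\cdot\t(K)\ge\wow(s)$, which holds because $\t(\w(s-1))=e(s-1)\cdot\w^*(s-1)\ge\w(s)\ge\wow(s)$. Your final inequality $\tfrac14\t(\w(s))\ge\wow(s)$ is written with the wrong parameter and needs to be rederived from this corrected indexing. Aside from this off-by-one bookkeeping, the plan is sound and matches the paper.
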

\addtocounter{theo}{+1}



\begin{proof}
	%
	Let the $3$-graph $B$ be the tight $6$-cycle; that is, $B$ is the $3$-graph on vertex classes $\{0,1,\ldots,5\}$ with edge set $E(B)=\{\{0,1,2\},\{1,2,3\},\{2,3,4\},\{3,4,5\},\{4,5,0\},\{5,0,1\}\}$.
	Note that $B$ is $3$-partite with vertex classes $(\{0,3\},\{1,4\},\{2,5\}\}$.
	Put $m=\w^*(s-1)+1$ and let $n \ge \t(m)$.
	Let $\Vside^0,\ldots,\Vside^{5}$ be $6$ mutually disjoint sets of size $n$ each.
	Let $\V^1 \succ\cdots\succ \V^m$ be an arbitrary sequence of $m$ successive equitable refinements of $\{\Vside^0,\ldots,\Vside^{5}\}$ with $|\V^i_h|=\t(i)$ for every $1 \le i \le m$ and $0 \le h \le 5$, which exists as $n$ is large enough.
	Extending the notation $\Z_i$ (above Definition~\ref{def:k-reg}), for every $0 \le x \le 5$
	we henceforth denote the restriction of the vertex partition $\Z$ to $\Vside^x$ by $\Z_x = \{Z \in \Z \,\vert\, Z \sub \Vside^x\}$.	
	For each edge $e=\{x,x+1,x+2\} \in E(B)$ (here and henceforth when specifying an edge, the integers are implicitly taken modulo $6$)
	apply Lemma~\ref{lemma:ind-k} with
	$$s-1,\,
	\Vside^{x},\Vside^{x+1},\Vside^{x+2}
	\text{ and }
	(\V^{1}_x \cup \V^1_{x+1} \cup \V^1_{x+2})
	\succ\cdots\succ (\V^{m}_{x}\cup\V^{m}_{x+1}\cup\V^{m}_{x+2}) \;.$$
	Let $H_e$ denote the resulting $3$-partite $3$-graph on $(\Vside^{x},\Vside^{x+1},\Vside^{x+2})$.
	Note that $d(H_e) = 2^{-(s-1)}$.
	Moreover, let
	$$c = 2^{-9} \quad\text{ and }\quad K=\w(s-1)+1 \;.$$
	Then $H_e$ has the property that for every $\langle 2^{-70} \rangle$-regular partition $(\Z',\E')$ of $H_e$ and every $1 \le i < K$,
	%
	%
	\begin{equation}\label{eq:paste-property}
	\text{if $\Z'_{x+2} \prec_{c} \V^i_{x+2}$ and $\Z'_{x+1} \prec_{c} \V^i_{x+1}$ then $\Z'_x \prec_{c} \V^{i+1}_x$.}
	\end{equation}	
	We construct our $3$-graph on the vertex set $\Vside:=\Vside^0 \cup\cdots\cup \Vside^5$ as
	$E(H) = \bigcup_{e} E(H_e)$; that is, $H$ is the edge-disjoint union of all six $3$-partite $3$-graphs $H_e$ constructed above.
	Note that $H$ is a $3$-partite $3$-graph (on vertex classes $(\Vside^0 \cup \Vside^3,\, \Vside^1 \cup \Vside^4,\, \Vside^2 \cup \Vside^5))$ of density $\frac68 2^{-(s-1)} \ge 2^{-s}$, as needed.
	%
	%
	%
	%
	%
	%
	%
	We will later use the following fact.
	\begin{prop}\label{prop:restriction}
		Let $(\Z,\E)$ be an $\langle 2^{-73}\rangle$-regular partition of $H$ and let
		$e \in E(B)$.
		If $\Z \prec \{\Vside^0,\ldots,\Vside^{5}\}$
		then the restriction $(\Z',\E')$ of $(\Z,\E)$ to $V(H_e)$ is a $\langle 2^{-70} \rangle$-regular partition of $H_e$.
	\end{prop}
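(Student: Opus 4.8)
The plan is to deduce Proposition~\ref{prop:restriction} from Claim~\ref{claim:restriction}, whose hypotheses match exactly the situation here once we identify the right value of the scaling parameter $\a$ and check that the vertex partition $\Z$ is compatible with the class structure of $H_e$ inside $H$. First I would recall that $H$ is the edge-disjoint union of the six $3$-partite $3$-graphs $H_e$, each of which has density $2^{-(s-1)}$ on three of the six vertex classes $\Vside^0,\ldots,\Vside^5$; since each $\Vside^x$ has the same size $n$ and each $H_e$ lives on exactly three of them, $e(H_e) = 2^{-(s-1)} n^3$ while $e(H) = 6 \cdot 2^{-(s-1)} n^3$, so $e(H_e) = \a \cdot e(H)$ with $\a = 1/6$. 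Thus $H_e$ is precisely an induced $3$-partite sub-$3$-graph of $H$ (on vertex classes $(\Vside^x,\Vside^{x+1},\Vside^{x+2})$, which are themselves subsets of the three vertex classes $(\Vside^0\cup\Vside^3,\Vside^1\cup\Vside^4,\Vside^2\cup\Vside^5)$ of $H$) carrying an $\a$-fraction of the edges.

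Next I would verify the refinement hypothesis of Claim~\ref{claim:restriction}. That claim requires $\Z \prec \bigcup_{i=1}^3 \{V_i, V_i \setminus V_i'\}$, i.e., that each part of $\Z$ lies inside one of the six sets obtained by splitting each class of $H$ into its $H_e$-part and its complement. But here the three classes of $H$ are $\Vside^0\cup\Vside^3$, $\Vside^1\cup\Vside^4$, $\Vside^2\cup\Vside^5$, and the three classes of $H_e$ are single $\Vside^x$'s; the complements within the $H$-classes are again single $\Vside^y$'s. So the required condition is implied by the hypothesis $\Z \prec \{\Vside^0,\ldots,\Vside^5\}$, which is exactly what the proposition assumes. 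Therefore Claim~\ref{claim:restriction} applies with $\d = 2^{-73}$ and $\a = 1/6$, yielding that the restriction $(\Z',\E')$ is a $\langle 2^{-73}/\a \rangle$-regular partition of $H_e$, i.e., $\langle 6 \cdot 2^{-73} \rangle$-regular. Since $6 \le 2^3$, we have $6 \cdot 2^{-73} \le 2^{-70}$, and a $\langle \d \rangle$-regular partition is trivially $\langle \d' \rangle$-regular for any $\d' \ge \d$ (increasing the error budget only weakens the requirement, as is immediate from Definition~\ref{def:star-regular} and Definition~\ref{def:k-reg}); hence $(\Z',\E')$ is a $\langle 2^{-70} \rangle$-regular partition of $H_e$, as claimed.

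The main thing to be careful about — the only real content beyond invoking Claim~\ref{claim:restriction} — is the bookkeeping that $H_e$ is genuinely the \emph{induced} sub-$3$-graph of $H$ on $(\Vside^x,\Vside^{x+1},\Vside^{x+2})$, and not merely a sub-$3$-graph. This holds because the $H_e$'s are constructed on pairwise distinct triples of classes coming from a $3$-partition of $\{0,\ldots,5\}$ induced by the tight $6$-cycle $B$: for a fixed edge $e = \{x,x+1,x+2\}$, no other edge $e'$ of $B$ uses all three of $\Vside^x, \Vside^{x+1}, \Vside^{x+2}$, so no edge of $H$ with one vertex in each of these three sets comes from any $H_{e'}$ with $e' \ne e$; thus $H$ restricted to $\Vside^x \cup \Vside^{x+1} \cup \Vside^{x+2}$ (with one vertex per class) is exactly $H_e$. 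Once this is in place, the density computation $e(H_e)/e(H) = 1/6$ and the verification of the partition-refinement condition are both routine, and the proof concludes by the one-line observation that $\langle \d \rangle$-regularity is monotone in $\d$.
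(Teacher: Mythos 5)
Your proof is correct and follows exactly the paper's route: invoke Claim~\ref{claim:restriction} with $\a = 1/6$ and $\d = 2^{-73}$ to obtain a $\langle 6\cdot 2^{-73}\rangle$-regular partition, and use the monotonicity of $\langle\cdot\rangle$-regularity in the parameter to conclude $\langle 2^{-70}\rangle$-regularity. The paper's proof is a one-liner citing the same claim and the same fact $e(H_e)=\frac16 e(H)$; your version simply makes explicit the bookkeeping (that $H_e$ is the induced sub-$3$-graph, and that the class-splitting hypothesis of Claim~\ref{claim:restriction} is exactly $\Z\prec\{\Vside^0,\ldots,\Vside^5\}$) that the paper leaves to the reader.
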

	\begin{proof}
		Immediate from Claim~\ref{claim:restriction} using the fact that $e(H_e) = \frac16 e(H)$.
		%
	\end{proof}	
	
	Now, let $(\Z,\E)$ be a $\langle 2^{-73} \rangle$-regular partition of $H$
	with $\Z \prec \V^1$.
	Our goal will be to show that
	\begin{equation}\label{eq:paste-goal}
	\Z \prec_{c} \V^{K} \;.
	\end{equation}
	Proving~(\ref{eq:paste-goal}) would complete the proof, by setting $\V_0$ in the statement to be $\V^1$ here (notice $|\V^1|=3\t(1) \le 2^{300}$ by~(\ref{eq:t})); 
	indeed, Claim~\ref{claim:refinement-size} would imply that
	$$|\Z| \ge \frac14|\V^{K}| = \frac14 \cdot 6 \cdot \t(K)
	\ge \t(K)
	\ge \t(\w(s-1))
	\ge \w(s)
	\ge \wow(s) \;,$$
	where the last inequality uses~$(\ref{eq:A_k})$.
	Assume towards contradiction that $\Z \nprec_{c} \V^{K}$. By averaging,
	\begin{equation}\label{eq:assumption}
	\Z_h \nprec_c \V^{K}_h \text{ for some } 0 \le h \le 5.
	\end{equation}
	For each $0 \le h \le 5$ let $1 \le \b(h) \le K$ be the largest integer satisfying $\Z_h \prec_c \V^{\b(h)}_h$,
	which is well defined since $\Z_h \prec_c \V^1_h$,
	since in fact $\Z \prec \V^1$.
	Put $\b^* = \min_{0 \le h \le 5} \b(h)$, and note that by~(\ref{eq:assumption}),
	\begin{equation}\label{eq:paste-star}
	\b^* < K \;.
	\end{equation}
	Let $0 \le x \le 5$ minimize $\b$, that is, $\b(x)=\b^*$.
	Therefore:
	\begin{equation}\label{eqcontra}
	\Z_{x+2} \prec_c \V^{\b^*}_{x+2} \mbox{~,~} \Z_{x+1} \prec_c \V^{\b^*}_{x+1} \mbox{ and }
	\Z_{x} \nprec_c \V^{\b^*+1}_{x}.		
	\end{equation}
	Let $e=\{x,x+1,x+2\} \in E(B)$.
	Let $(\Z',\E')$ be the restriction of $(\Z,\E)$ to $V(H_e)=\Vside^{x} \cup \Vside^{x+1} \cup \Vside^{x+2}$, which is a $\langle 2^{-70} \rangle$-regular partition of $H_e$ by Proposition~\ref{prop:restriction}. Since $\Z'_x=\Z_x$, $\Z'_{x+1}=\Z_{x+1}$, $\Z'_{x+2}=\Z_{x+2}$ we get
	from (\ref{eqcontra}) a contradiction to~(\ref{eq:paste-property}) with $i=\beta^*$.
	We have thus proved~(\ref{eq:paste-goal}) and so the proof is complete.
\end{proof}

\section{Wowzer-type Lower Bounds for $3$-Graph Regularity Lemmas}\label{sec:FR}

\renewcommand{\K}{\mathcal{K}}

The purpose of this section is to apply Theorem \ref{theo:main} in order to prove Corollary~\ref{coro:FR-LB},
thus giving wowzer-type (i.e., $\Ack_3$-type) lower bounds for the $3$-graph regularity lemmas of Frankl and R\"odl~\cite{FrankRo02} and of Gowers~\cite{Gowers06}.
We will start by giving the necessary definitions for Frankl and R\"odl's lemma
and state our corresponding lower bound.
Next we will state the necessary definitions for Gower's lemma and state our corresponding lower bound.
The formal proofs would then follow.



\subsection{Frankl and R\"odl's $3$-graph regularity}





\begin{definition}[$(\ell,t,\e_2)$-equipartition,~\cite{FrankRo02}]\label{def:ve-partition}
%
	An \emph{$(\ell,t,\e_2)$-equipartition} on a set $V$ is a $2$-partition $(\Z,\E)$ on $V$ where $\Z$ is an equipartition of order $|\Z|=t$ and every graph in $\E$ is $\e_2$-regular\footnote{Here, and in several places in this section, we of course refer to the ``traditional'' notion of Szemer\'edi's $\e$-regularity, as defined at the beginning of Section \ref{sec:define}. } of density $\ell^{-1} \pm \e_2$.
%
\end{definition}

\begin{remark}
	If $\e_2 \le \frac12\ell^{-1}$ then $(Z,\E)$ has at most $2\ell$ bipartite graphs between every pair of clusters of $\Z$.
\end{remark}


A \emph{triad} of a $2$-partition $(\Z,\E)$ is any tripartite graph whose three vertex classes are in $\Z$ and three edge sets are in $\E$. We often identify a triad with a triple of its edge sets $(E_1,E_2,E_3)$.
The \emph{density} of a triad $P$ in a $3$-graph $H$ is $d_H(P)=|E(H) \cap T(P)|/|T(P)|$ (and $0$ if $|T(P)|=0$).
A \emph{subtriad} of $P$ is any subgraph of $P$ on the same vertex classes.


\begin{definition}[$3$-graph $\e$-regularity~\cite{FrankRo02}]\label{def:FR-reg}
	Let $H$ be a $3$-graph.
	A triad $P$ is \emph{$\e$-regular} in $H$ if every subtriad $P'$ with $|T(P')| \ge \e|T(P)|$ satisfies $|d_H(P')-d_H(P)| \le \e$.\\
	An $(\ell,t,\e_2)$-equipartition $\P$ on $V(H)$ is an \emph{$\e$-regular} partition of $H$ if
	$\sum_P |T(P)| \le \e|V|^3$ where the sum is over all triads of $\P$ that are not $\e$-regular in $H$.	
\end{definition}

The $3$-graph regularity of Frankl and R\"odl~\cite{FrankRo02} states, very roughly, that for every $\e>0$ and every function $\e_2\colon\N\to(0,1]$, every $3$-graph has an $\e$-regular $(\ell,t,\e_2(\ell))$-equipartition where $t,\ell$ are bounded by a wowzer-type function.
In fact, the statement in~\cite{FrankRo02} uses a considerably stronger notion of regularity of a partition than in Definition~\ref{def:FR-reg} that involves an additional function $r(t,\ell)$ which we shall not discuss here (as discussed in~\cite{FrankRo02}, this stronger notion was crucial for allowing them to prove the $3$-graph removal lemma).
Our lower bound below applies even to the weaker notion stated above, which corresponds to taking $r(t,\ell)\equiv 1$.


Using Theorem~\ref{theo:main} we can deduce a wowzer-type \emph{lower} bound for Frankl and R\"odl's $3$-graph regularity lemma.
The proof of this lower bound appears in Subsection~\ref{subsec:FR-LB-proof}.



\begin{theo}[Lower bound for Frankl and R\"odl's regularity lemma]\label{theo:FR-LB}
	Put $c = 2^{-400}$. 
	For every $s \in \N$ there exists a $3$-partite $3$-graph $H$ of density $p=2^{-s}$, and a partition $\V_0$ of $V(H)$ with $|\V_0| \le 2^{300}$,
	such that
	if $(\Z,\E)$ is an $\e$-regular $(\ell,t,\e_2(\ell))$-equipartition of $H$,
	with $\e \le c p$, $\e_2(\ell) \le c \ell^{-3}$ and $\Z \prec \V_0$, then $|\Z| \ge \wow(s)$.
\end{theo}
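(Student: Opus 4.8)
The plan is to deduce Theorem~\ref{theo:FR-LB} from Theorem~\ref{theo:main} by showing that an $\e$-regular $(\ell,t,\e_2(\ell))$-equipartition of $H$, with the stated smallness conditions on $\e$ and $\e_2$, is in particular a $\langle 2^{-73}\rangle$-regular partition of $H$ in the sense of Definition~\ref{def:k-reg}. Concretely, I would take the very $H$ and $\V_0$ produced by Theorem~\ref{theo:main} (density $\ge 2^{-s}$, $|\V_0|\le 2^{300}$), and argue: if $(\Z,\E)$ is an $\e$-regular $(\ell,t,\e_2(\ell))$-equipartition with $\e\le cp$, $\e_2(\ell)\le c\ell^{-3}$, $\Z\prec\V_0$, then $(\Z,\E)$ witnesses a $\langle 2^{-73}\rangle$-regular partition, and hence $|\Z|\ge\wow(s)$ by Theorem~\ref{theo:main}.

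There are two things to verify. First, the ``goodness'' requirement (Definition~\ref{def:k-good}): every bipartite graph $E\in\E$ must be $\langle 2^{-73}\rangle$-regular in the sense of Definition~\ref{def:star-regular}. Since $E$ is $\e_2$-regular of density $\ell^{-1}\pm\e_2$ with $\e_2\le c\ell^{-3}\le \ell^{-1}$, for any $A'\subseteq A,B'\subseteq B$ of relative size $\ge 2^{-73}$ (which certainly exceeds $\e_2$) we get $d(A',B')\ge d(A,B)-\e_2\ge \ell^{-1}-2\e_2\ge \tfrac12\ell^{-1}\ge\tfrac12 d(A,B)$ — this is exactly graph $\langle 2^{-73}\rangle$-regularity, provided the threshold $2^{-73}\ge\e_2$, which holds since $\e_2\le c\ell^{-3}\le c=2^{-400}$. (One should also check $2^{-73}\ge\e_2$ uniformly; it follows from $\e_2\le 2^{-400}$.) Second, and this is the main point, I must show that for each $i$, $\E_i\cup\Z_i$ is a $\langle 2^{-73}\rangle$-regular partition of the auxiliary graph $G_H^i$ (Definition~\ref{def:aux}), i.e. that one can add/remove at most $2^{-73}e(G_H^i)$ edges so that every bipartite graph induced between parts of $\E_i\cup\Z_i$ becomes $\langle 2^{-73}\rangle$-regular. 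The irregular triads of $\P$ contribute at most $\e|V|^3$ edges in total; since $e(H)=p\,|V|^3/\text{const}\ge 2^{-s}|V|^3/\text{const}$ and $e(G_H^i)=e(H)$, choosing $\e\le cp$ with $c$ small makes $\e|V|^3$ a tiny fraction of $e(G_H^i)$. After deleting all edges of $H$ lying in irregular triads (and edges in triads of density $<$ some small threshold), every remaining triad $P=(E_1,E_2,E_3)$ is $\e$-regular of density bounded below. Translating $\e$-regularity of a triad, together with the $\e_2$-regularity of the underlying $E_j$'s, into graph $\langle\d\rangle$-regularity of the bipartite graph $G_H^i$ restricted to $(E_j\times E_k,\;E_i)$ is the technical heart: one uses the counting/correspondence between $|T(P')|$ for subtriads $P'$ and products of densities — this is precisely the kind of ``routine probabilistic/counting argument'' the authors defer to Appendix~\ref{sec:FR-appendix}.

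Thus the key steps, in order, are: (1) fix $H,\V_0$ from Theorem~\ref{theo:main}; (2) assume an $\e$-regular $(\ell,t,\e_2(\ell))$-equipartition $(\Z,\E)$ with the hypothesized bounds and $\Z\prec\V_0$; (3) check $\langle 2^{-73}\rangle$-goodness of $(\Z,\E)$ from $\e_2$-regularity and $\e_2\le c\ell^{-3}$; (4) for each $i\in\{1,2,3\}$, bound the total number of $H$-edges in non-$\e$-regular or low-density triads by $O(\e|V|^3)\le \e_2$-correction$\le 2^{-73}e(G_H^i)$, using $\e\le cp$ and $p=2^{-s}$; (5) delete those edges and show each surviving triad yields a $\langle 2^{-73}\rangle$-regular bipartite graph between the corresponding parts of $\E_i\cup\Z_i$, converting Frankl–R\"odl triad-regularity plus $\e_2$-graph-regularity into Definition~\ref{def:star-regular}; (6) conclude $(\Z,\E)$ is a $\langle 2^{-73}\rangle$-regular partition of $H$ and invoke Theorem~\ref{theo:main} to get $|\Z|=t\ge\wow(s)$. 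I expect step (5) — the precise conversion of triad $\e$-regularity into bipartite $\langle\d\rangle$-regularity of $G_H^i$, keeping careful track of how $\e,\e_2,\ell,\d$ relate and why $c=2^{-400}$ suffices — to be the main obstacle, and it is exactly where the technical claims of Appendix~\ref{sec:FR-appendix} are needed; everything else is bookkeeping with the density lower bound $p=2^{-s}$ and the constant $c$.
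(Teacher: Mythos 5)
Your proposal follows essentially the same route as the paper's proof: take $H,\V_0$ from Theorem~\ref{theo:main}, check that the $(\ell,t,\e_2(\ell))$-equipartition is $\langle 2^{-73}\rangle$-good from $\e_2$-regularity, remove the $\le\a\,e(H)$ edges lying in irregular or low-density triads, and use the technical conversion (the paper's Claim~\ref{claim:reduction}) to show each $\E_i\cup\Z_i$ is a $\langle 2^{-73}\rangle$-regular partition of $G_H^i$, concluding via Theorem~\ref{theo:main}. One small arithmetic slip in your goodness step: the last link ``$\ge\tfrac12\ell^{-1}\ge\tfrac12 d(A,B)$'' can fail when $d(A,B)>\ell^{-1}$; instead note $d(A,B)\ge\ell^{-1}-\e_2\ge 2\e_2$ and hence $d(A',B')\ge d(A,B)-\e_2\ge\tfrac12 d(A,B)$, as in the paper.
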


\begin{remark}
	One can easily remove the assumption $\Z \prec \V_0$ by taking the common refinement of $\Z$ with $\V_0$ (and adjusting $\E$ appropriately).
Since $|\V_0|=O(1)$ this has only a minor effect on the parameters $\e,\ell,t,\e_2(\ell)$ of the partition and thus
one gets essentially the same lower bound. We omit the details of this routine transformation.
\end{remark}

\subsection{Gowers' $3$-graph regularity}

Here we consider the $3$-graph regularity Lemma due to Gowers~\cite{Gowers06}.


\begin{definition}[$\a$-quasirandomness, see Definition~6.3 in \cite{Gowers06}]\label{def:quasirandom}
	Let $H$ be a $3$-graph, and let $P=(E_0,E_1,E_2)$ be a triad with $d(E_0)=d(E_1)=d(E_2)=:d$ on vertex classes $(X,Y,Z)$ with $|X|=|Y|=|Z|=:n$. We say that $P$ is \emph{$\a$-quasirandom} in $H$ if
	$$\sum_{x_0,x_1 \in X}\sum_{y_0,y_1 \in Y}\sum_{z_0,z_1 \in Z} \prod_{i,j,k\in\{0,1\}} f(x_i,y_j,z_k) \le \a d^{12}n^6 \;,$$
	where
	$$f(x,y,z) =
	\begin{cases}
	1-d_H(P)				&\text{if } (x,y,z) \in T(P), (x,y,z) \in E(H)\\
	-d_H(P) 				&\text{if } (x,y,z) \in T(P), (x,y,z) \notin E(H)\\
	0 						&\text{if } (x,y,z) \notin T(P) \;.
	\end{cases}$$
	An $(\ell,t,\e_2)$-equipartition $\P$ on $V(H)$ is an \emph{$\a$-quasirandom} partition of $H$ if
	$\sum_P |T(P)| \le \a|V|^3$ where the sum is over all triads of $\P$ that are not $\a$-quasirandom in $H$.	
\end{definition}

The $3$-graph regularity lemma of Gowers~\cite{Gowers06} (see also~\cite{NaglePoRoSc09}) can be equivalently phrased as stating that, very roughly, for every $\a>0$ and every function $\e_2\colon\N\to(0,1]$, every $3$-graph has an $\a$-quasirandom $(\ell,t,\e_2(\ell))$-equipartition where $t,\ell$ are bounded by a wowzer-type function.

One way to prove a wowzer-type lower bound for Gowers' $3$-graph regularity lemma is along similar lines to the proof of Theorem~\ref{theo:FR-LB}. 
However, there is shorter proof using the fact that Gowers' notion of quasirandomness implies Frankl and R\"odl's notion of regularity.
In all that follows we make the rather trivial assumption that, in the notation above, $\a,1/\ell \le 1/2$.

\begin{prop}[\cite{NagleRoSc17}]\label{prop:Schacht}
	There is $C \ge 1$ such that the following holds;
	if a triad $P=(E_0,E_1,E_2)$ is $\e^C$-quasirandom and for every $0 \le i \le 2$ the bipartite graph $E_i$ is $d(E_i)^C$-regular then $P$ is $\e$-regular.	
%
	%
	%
\end{prop}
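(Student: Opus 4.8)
The plan is to prove Proposition~\ref{prop:Schacht} by reducing Frankl--R\"odl $\e$-regularity of a triad $P$ to the quasirandomness condition via a standard ``counting lemma'' argument applied to a putative witness of irregularity. Suppose for contradiction that $P=(E_0,E_1,E_2)$ is $\e^C$-quasirandom, each $E_i$ is $d(E_i)^C$-regular, but $P$ is \emph{not} $\e$-regular in $H$. Then there is a subtriad $P'$ with $|T(P')| \ge \e|T(P)|$ and $|d_H(P')-d_H(P)| > \e$. I would first normalize: write $d := d_H(P)$, and express the deviation $\sum_{(x,y,z)\in T(P')} f(x,y,z) = (d_H(P')-d)\,|T(P')|$, which has absolute value exceeding $\e |T(P')| \ge \e^2 |T(P)|$. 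The goal is to show this forces the octahedral sum $\sum \prod_{i,j,k} f(x_i,y_j,z_k)$ to be large, contradicting $\e^C$-quasirandomness once $C$ is chosen large enough.

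The key step is a Cauchy--Schwarz / tensor-power argument showing that a large ``one-sided'' deviation of $f$ against a dense subtriad $P'$ propagates into a large octahedral sum. The standard route: first replace $P'$ by a combinatorially simpler witness — since $P' \sub P$ and each side $E_i$ is $d(E_i)^C$-regular, one can use the bipartite regularity of the $E_i$ to argue that $|T(P)| = (1\pm o(1)) d(E_0)d(E_1)d(E_2) n^3$ (the triangle counting lemma for regular bipartite graphs), and similarly control $|T(P')|$ and the various cross-counts. Then one applies the Gowers-box / Cauchy--Schwarz inequality six times (once for each of the six ``doubling'' directions in the octahedron), bounding the octahedral sum from below in terms of $\big(\sum_{T(P')} f\big)^{2^6}$ divided by appropriate powers of $n$ and $d(E_i)$. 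This is where the constant $C$ enters: the error terms coming from the bipartite regularity assumptions must be swamped by $\e^C d^{12} n^6$, and since the main term is of order $\e^{2^7} d^{O(1)} n^6$ (roughly), choosing $C$ to be a sufficiently large absolute constant — something like $C = 2^8$ or whatever the bookkeeping yields — makes the contradiction go through for all small $\e$.

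I expect the main obstacle to be the bookkeeping of densities: the octahedral sum in Definition~\ref{def:quasirandom} is normalized by $d^{12} n^6$ where $d$ is the \emph{bipartite} density $d(E_0)=d(E_1)=d(E_2)$, whereas the deviation of $f$ naturally scales with $|T(P)| \approx d^3 n^3$ and with $d_H(P)$, the \emph{hypergraph} density of $H$ relative to the triad. One must carefully track how a deviation of size $\e|T(P')| \gtrsim \e^2 d^3 n^3$ in the linear functional becomes, after six Cauchy--Schwarz steps, a lower bound of the form $c\,\e^{2^7} d^{c'} n^6$ on the octahedral sum, and verify this beats $\e^C d^{12} n^6$; the delicate point is ensuring the power of $d$ on the lower-bound side is at most $12$ (so that the inequality is not vacuous when $d$ is small), which is exactly why one needs the bipartite graphs to be \emph{very} regular (regularity parameter $d(E_i)^C$, polynomial in the density) rather than merely $\e$-regular. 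A secondary subtlety is that the witness subtriad $P'$ need not split as a product over the three pairs, so one cannot directly ``tensor'' it; the fix is to absorb $P'$ into the function by redefining $f$ to vanish outside $T(P')$ as well, and then run the Gowers-box inequality on the modified function, at the cost of an extra factor that is again controlled by bipartite regularity.
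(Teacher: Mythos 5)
The paper does not prove Proposition~\ref{prop:Schacht}: it is stated as a citation to the (unpublished) manuscript \cite{NagleRoSc17}, so there is no in-paper proof to compare your sketch against. Your outline --- assume $P$ is not $\e$-regular, take a witness subtriad $P'$, write the deviation as $\sum_{T(P')} f$, and drive up the octahedral sum by iterated Cauchy--Schwarz, using the bipartite $d(E_i)^C$-regularity of the $E_i$ to control normalizing factors --- is indeed the standard route (see, e.g., \cite{Gowers06} and \cite{NaglePoRoSc09}), and it is the right conceptual picture. But several details are off.

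First, the exponents: for $3$-graphs the Gowers-box argument doubles each of the three vertex classes once, so one applies Cauchy--Schwarz \emph{three} times (not six), obtaining a bound of the shape $\bigl(\sum_{T(P')} f\bigr)^{2^3}$ against the octahedral sum times normalizing counts; the exponents $2^6$ and $2^7$ in your sketch are not the right ones. Second, your ``secondary subtlety'' is not actually a problem: by the paper's definition, a subtriad $P'$ of $P=(E_0,E_1,E_2)$ is a subgraph on the same vertex classes, i.e.\ $P'=(E_0',E_1',E_2')$ with $E_i'\subseteq E_i$, and hence $T(P')=\{(x,y,z): (y,z)\in E_0',\ (x,z)\in E_1',\ (x,y)\in E_2'\}$ \emph{is} cut out by a product of three bipartite indicators. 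That is precisely the box structure the Cauchy--Schwarz argument needs, so no ``tensoring'' issue arises. Third, your proposed fix --- redefining $f$ to vanish outside $T(P')$ --- would not be sound as stated: the quasirandomness hypothesis constrains the octahedral sum of the \emph{original} balanced function $f$ of $P$, so after modifying $f$ you would lose the right to invoke it. The correct move is to keep $f$ and carry the bipartite indicators $1_{E_i'}$ along through the Cauchy--Schwarz steps (where after squaring they become harmless since $1_{E_i'}^2=1_{E_i'}\le 1_{E_i}$), which is exactly where the $d(E_i)^C$-regularity of the $E_i$ is used to bound the intermediate counts.
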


Our lower bound for Gowers' $3$-graph regularity lemma is as follows.

\begin{theo}[Lower bound for Gowers' regularity lemma]\label{theo:Gowers-LB}
	For every $s \in \N$ there exists a $3$-partite $3$-graph $H$ of density $p=2^{-s}$, and a partition $\V_0$ of $V(H)$ with $|\V_0| \le 2^{300}$,
	such that
	if $(\Z,\E)$ is an $\a$-quasirandom $(\ell,t,\e_2(\ell))$-equipartition of $H$,
	with $\a \le \poly(p)$, $\e_2(\ell) \le \poly(1/\ell)$ and $\Z \prec \V_0$, then $|\Z| \ge \wow(s)$.
\end{theo}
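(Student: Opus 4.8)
The plan is to derive Theorem~\ref{theo:Gowers-LB} from Theorem~\ref{theo:FR-LB} via Proposition~\ref{prop:Schacht}, rather than redoing the whole construction. The underlying $3$-graph $H$ and the partition $\V_0$ will be exactly the ones produced by Theorem~\ref{theo:FR-LB} for the given $s$, so $d(H) = p = 2^{-s}$ and $|\V_0| \le 2^{300}$ automatically. The task is then to show that any $\a$-quasirandom $(\ell,t,\e_2(\ell))$-equipartition of $H$ with $\a,\e_2(\ell)$ polynomially small and $\Z \prec \V_0$ is, after a mild loss, an $\e$-regular $(\ell,t,\e_2'(\ell))$-equipartition in the sense of Definition~\ref{def:FR-reg}, with $\e \le cp$ and $\e_2'(\ell) \le c\ell^{-3}$, which by Theorem~\ref{theo:FR-LB} forces $|\Z| \ge \wow(s)$.

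First I would fix the constant $C \ge 1$ from Proposition~\ref{prop:Schacht} and choose the polynomial bounds in the hypothesis accordingly: take $\a \le (cp)^C =: \poly(p)$ and $\e_2(\ell) \le \min\{c\ell^{-3},\, (\tfrac12\ell^{-1})^C\} =: \poly(1/\ell)$, where $c = 2^{-400}$ is the constant from Theorem~\ref{theo:FR-LB}. Since every graph in $\E$ has density $\ell^{-1} \pm \e_2(\ell) \ge \tfrac12\ell^{-1}$ and is $\e_2(\ell)$-regular, and $\e_2(\ell) \le (\tfrac12\ell^{-1})^C \le d(E_i)^C$, the regularity hypothesis of Proposition~\ref{prop:Schacht} is met for every bipartite graph $E_i \in \E$. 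Next, for any triad $P = (E_0,E_1,E_2)$ of the partition that is $\a$-quasirandom in $H$, Proposition~\ref{prop:Schacht} applies with $\e := (cp)$: since $\a \le (cp)^C = \e^C$ and each $E_i$ is $d(E_i)^C$-regular, $P$ is $(cp)$-regular in $H$ in the sense of Definition~\ref{def:FR-reg}. One subtlety to address here is the hypothesis $d(E_0) = d(E_1) = d(E_2)$ in Definition~\ref{def:quasirandom} and Proposition~\ref{prop:Schacht}; since the equipartition only guarantees densities in $\ell^{-1} \pm \e_2(\ell)$, I would either note that the standard statements of these results tolerate near-equal densities up to the error already absorbed in the polynomial slack, or invoke the version of Proposition~\ref{prop:Schacht} stated for densities that agree up to $\e_2(\ell)$ — this is exactly the kind of routine bookkeeping the paper defers to the appendix.

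Having shown that $\a$-quasirandomness of a triad implies $(cp)$-regularity of that triad, the bound on the total size of non-regular triads transfers directly: $\sum_P |T(P)| \le \a|V|^3 \le (cp)|V|^3$ where the sum is over triads that are not $(cp)$-regular in $H$ (since every such triad is in particular not $\a$-quasirandom). Hence $(\Z,\E)$ is a $(cp)$-regular $(\ell,t,\e_2(\ell))$-equipartition of $H$ with $\e := cp \le cp$ and $\e_2(\ell) \le c\ell^{-3}$, and $\Z \prec \V_0$ by assumption. Applying Theorem~\ref{theo:FR-LB} to this partition yields $|\Z| \ge \wow(s)$, completing the proof. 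The main obstacle I anticipate is not conceptual but the careful matching of the polynomial exponents — ensuring the chosen $\poly(p)$ and $\poly(1/\ell)$ simultaneously satisfy $\a \le \e^C$ for $\e = cp$, $\e_2(\ell) \le d(E_i)^C$ for the actual densities $d(E_i) \ge \tfrac12\ell^{-1}$, and $\e_2(\ell) \le c\ell^{-3}$ as required by Theorem~\ref{theo:FR-LB} — together with correctly handling the near-equal-density technicality in Definition~\ref{def:quasirandom}; both are handled by taking $C$ large enough and the polynomials small enough, so the argument is short once the constants are pinned down.
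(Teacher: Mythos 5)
Your proposal is correct and follows essentially the same route as the paper: it uses the same $H$ and $\V_0$ from Theorem~\ref{theo:FR-LB}, the same choice $\a \le (cp)^C$ and $\e_2(\ell) \le \min\{c\ell^{-3},(2\ell)^{-C}\}$, verifies $d(E_i)^C$-regularity of each $E_i$ via $d(E_i) \ge 1/2\ell$, applies Proposition~\ref{prop:Schacht} to convert $\a$-quasirandomness into $cp$-regularity of each triad, and transfers the bound on exceptional triads to invoke Theorem~\ref{theo:FR-LB}. Your remark about the equal-density hypothesis in Definition~\ref{def:quasirandom} is a fair observation that the paper glosses over, but otherwise the two arguments coincide.
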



\begin{proof}
	Given $s$, let $H$ and $\V_0$ be as in Theorem~\ref{theo:FR-LB}.
	Let $\P=(\Z,\E)$ be an $\a$-quasirandom $(\ell,t,\e_2(\ell))$-equipartition of $H$ with $\Z \prec \V_0$, $\a \le (cp)^C$ and $\e_2(\ell) \le \min\{c\ell^{-3},\,(2\ell)^{-C}\}$, where $c$ and $C$ are as in Theorem~\ref{theo:FR-LB} and Proposition~\ref{prop:Schacht} respectively.
	We will show that $\P$ is a $cp$-regular partition of $H$,
	which would complete the proof using Theorem~\ref{theo:FR-LB} and the fact that $\e_2 \le c\ell^{-3}$.
	Let $P=(E_0,E_1,E_2)$ be a triad of $\P$ that is $\a$-quasirandom in $H$.
	Note that, by our choice of $\e_2(\ell)$, for every $0 \le i \le 2$ we have $d(E_i) \ge 1/\ell - \e_2(\ell) \ge 1/2\ell$; thus, since $\e_2(\ell) \le (1/2\ell)^{C} \le d(E_i)^C$, we have that $E_i$ is $d(E_i)^C$-regular.
	Applying Proposition~\ref{prop:Schacht} on $P$ we deduce that $P$ is $\e$-regular with $\e=\a^{1/C} \le cp$.
	Since $\P$ is an $\a$-quasirandom partition of $H$ we have, by Definition~\ref{def:quasirandom} and since $\a \le \e$, that $\P$ is an $\e$-regular partition of $H$, as needed.
	%
%
\end{proof}

\subsection{Proof of Theorem~\ref{theo:FR-LB}}\label{subsec:FR-LB-proof}

The proof of Theorem~\ref{theo:FR-LB} will follow quite easily from Theorem~\ref{theo:main} together with Claim~\ref{claim:reduction} below.
Claim~\ref{claim:reduction} basically shows that a $\langle \d \rangle$-regularity ``analogue'' of Frankl and R\"odl's notion of regularity implies graph $\langle \d \rangle$-regularity. 
%
%
%
%
Here it will be convenient to say that a graph partition is \emph{perfectly} $\langle \d \rangle$-regular if all pairs of distinct clusters are $\langle \d \rangle$-regular without modifying any of the graph's edges.
Furthermore, we will henceforth abbreviate $t(P)=|T(P)|$ for a triad $P$.
We will only sketch the proof of Claim~\ref{claim:reduction}, deferring the full details to the Appendix~\ref{sec:FR-appendix}.

\begin{claim}\label{claim:reduction}
	Let $H$ be a $3$-partite $3$-graph on vertex classes $(\Aside,\Bside,\Cside)$,
	and let $(\Z,\E)$ be an $(\ell,t,\e_2)$-equipartition of $H$ with $\Z \prec \{\Aside,\Bside,\Cside\}$ such that for every triad $P$ of $\P$ and every subtriad $P'$ of $P$ with $t(P') \ge \d \cdot t(P)$ we have $d_H(P') \ge \frac23 d_H(P)$.
	If $\e_2(\ell) \le (\d^2/88)\ell^{-3}$
	then $\E_3 \cup \Z_3$ is a perfectly $\langle 2\sqrt{\d} \rangle$-regular partition of $G_H^3$.
\end{claim}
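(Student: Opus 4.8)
The plan is to prove the contrapositive: if $\E_3 \cup \Z_3$ is \emph{not} a perfectly $\langle 2\sqrt{\d}\rangle$-regular partition of $G_H^3$, then there is a triad $P$ of $\P$ and a subtriad $P'$ of $P$ with $t(P') \ge \d\cdot t(P)$ but $d_H(P') < \tfrac23 d_H(P)$, contradicting the hypothesis. So suppose some pair of clusters $Z \in \Z_1$, $Z' \in \Z_2$ is such that the bipartite graph $E \in \E_3$ between them is \emph{not} $\langle 2\sqrt{\d}\rangle$-regular; that is, there are $A' \sub Z$, $B' \sub Z'$ with $|A'| \ge 2\sqrt{\d}|Z|$, $|B'| \ge 2\sqrt{\d}|Z'|$ and $d_E(A',B') < \tfrac12 d_E(Z,Z')$. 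The goal is to package $A'$, $B'$ together with edges in $\E$ over the third class $\Cside$ into a triad and a subtriad witnessing irregularity of $H$.

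\textbf{Building the triad.} First I would pick a cluster $W \in \Z_3$ and bipartite graphs $F \in \E$ between $Z$ and $W$ and $F' \in \E$ between $Z'$ and $W$, and let $P = (E, F, F')$ be the resulting triad of $\P$. The subtriad $P'$ is then obtained by restricting to $A' \sub Z$ and $B' \sub Z'$: $P' = (E[A',B'],\, F[A', W],\, F'[B', W])$, which sits on the same vertex classes $(Z, Z', W)$ as required by the definition of subtriad only if I keep the third class whole — so $P'$ uses all of $W$ but only $A', B'$ on the other two sides. The key quantitative point is to estimate $t(P') / t(P)$ from below and to relate $d_H(P')$ to $d_H(P)$. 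Since $\P$ is an $(\ell, t, \e_2)$-equipartition, each of $E, F, F'$ is $\e_2$-regular of density $\ell^{-1} \pm \e_2$; with $\e_2(\ell) \le (\d^2/88)\ell^{-3}$ this means all three densities are very close to $\ell^{-1}$, and $\e_2$-regularity lets me count triangles in $P$ and in $P'$ up to small multiplicative error via the triangle counting lemma. This gives $t(P') = (1 \pm o(1)) |A'||B'||W| \ell^{-3}$ and $t(P) = (1 \pm o(1)) |Z||Z'||W|\ell^{-3}$, so $t(P')/t(P) \ge (1-o(1)) \cdot 4\d \cdot (\text{factor from } |A'|/|Z|, |B'|/|Z'|) \ge \d$ once the error terms are controlled by the choice of $\e_2$; the factor $88$ and the square in $\d^2/88$ are there precisely to absorb these counting errors.

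\textbf{Transferring the density drop.} The heart of the argument is showing $d_H(P') < \tfrac23 d_H(P)$. The point is that $d_H(P) = |E(H) \cap T(P)| / t(P)$, and by $\e_2$-regularity of the three bipartite layers, the triangles of $T(P)$ are distributed essentially uniformly with respect to the coordinates — in particular the fraction of triangles of $T(P)$ whose $(\Aside,\Bside)$-edge lies in $E[A',B']$ is, up to $o(1)$ error, $d_E(A',B')|A'||B'| / (d_E(Z,Z')|Z||Z'|)$. More usefully, I should relate $|E(H) \cap T(P')|$ to $|E(H) \cap T(P)|$: restricting to $A', B'$ can only \emph{decrease} the count of $H$-edges, but the normalizing quantity $t(P')$ could decrease by much less if $E[A',B']$ is sparse relative to $E[Z,Z']$. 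Concretely, $E(H) \cap T(P') \sub E(H) \cap T(P)$, so $|E(H)\cap T(P')| \le |E(H) \cap T(P)| = d_H(P) t(P)$; meanwhile I have a lower bound $t(P') \ge$ something, but I actually want an \emph{upper} bound on $d_H(P') = |E(H)\cap T(P')|/t(P')$, which requires a good lower bound on $t(P')$ — and the defect $d_E(A',B') < \tfrac12 d_E(Z,Z')$ does \emph{not} directly help here. The correct move is instead to choose $A', B'$ as the witnesses and observe that the edges of $H$ inside $T(P')$ project down to edges of $E[A',B']$, so $|E(H)\cap T(P')| \le |E_E(A',B')| \cdot |W| = d_E(A',B')|A'||B'||W|$, whereas $t(P') \ge (1-o(1)) d_E(A',B')|A'||B'||W| \cdot (\text{triangle density of } F, F' \text{ relative to } E[A',B'])$ — this last factor is $\approx \ell^{-2}$ by $\e_2$-regularity of $F, F'$ over $W$, so $d_H(P') \le (1+o(1))\ell^2$, which is nonsense. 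Let me instead use the defect more carefully: the whole reason we get a drop is that removing the low-density part $E[A',B']$ from $E$ leaves a \emph{high}-density remainder. So I would actually take $P'$ to be the subtriad $(E \sm E[A',B'],\, F,\, F')$ — i.e. delete the bad bipartite edges — giving $d_H(P') \ge$ large, OR take $P' = (E[A',B'], F, F')$ giving small $d_H(P')$; one of these two complementary subtriads must have $t$-size at least $\d \cdot t(P)$ and density bounded away from $d_H(P)$ on the appropriate side. This dichotomy, combined with averaging over the choice of $W$ and over the pairs $Z, Z'$ that are non-regular (there is at least one by assumption), is what forces a violating triad to exist.

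\textbf{Main obstacle.} The delicate point — and where I expect the bulk of the Appendix~\ref{sec:FR-appendix} work to go — is the triangle counting with the \emph{correct} error bounds: one must track that $\e_2(\ell) \le (\d^2/88)\ell^{-3}$ is exactly strong enough that the multiplicative $(1\pm o(1))$ errors in estimating $t(P)$, $t(P')$, and $|E(H)\cap T(\cdot)|$ stay below the gap between $\tfrac12$ (the graph $\langle\cdot\rangle$-regularity defect) and $\tfrac23$ (the hypothesis threshold), with the leftover slack eaten by the passage from $\langle\sqrt{\d}\rangle$ to $\langle 2\sqrt{\d}\rangle$. Keeping the constant $88$ honest through a triangle-counting lemma applied to three $\e_2$-regular bipartite graphs of density $\approx \ell^{-1}$, and correctly handling the case analysis of which complementary subtriad to use, is the real content; the combinatorial skeleton above is routine.
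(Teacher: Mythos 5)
The proposal misreads what is being tested for $\langle 2\sqrt{\d}\rangle$-regularity. The claim asserts that $\E_3 \cup \Z_3$ is a $\langle 2\sqrt{\d}\rangle$-regular partition of $G := G_H^3$, a bipartite graph whose two sides are $\Aside \times \Bside$ and $\Cside$ (Definition~\ref{def:aux}). An element $E \in \E_3$ is a \emph{vertex subset} of the $\Aside \times \Bside$ side of $G$, and $C \in \Z_3$ is a vertex subset of the $\Cside$ side; what must be $\langle 2\sqrt{\d}\rangle$-regular is the bipartite graph $G[E,C]$, where $(a,b) \in E$ and $c \in C$ are adjacent iff $(a,b,c) \in E(H)$. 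A witness to irregularity is therefore a pair of subsets $E' \sub E$ (a subset of pairs in $\Aside \times \Bside$, not a product set $A' \times B'$) and $C' \sub C$ with $d_G(E',C') < \frac12 d_G(E,C)$. Your proposal instead posits witnesses $A' \sub Z$, $B' \sub Z'$ and a density defect in $E$ regarded as a bipartite graph on $(Z,Z')$ --- but $E$'s regularity as a bipartite graph on $(Z,Z')$ is part of the \emph{hypothesis} (the $\e_2$-regularity built into the $(\ell,t,\e_2)$-equipartition), not the conclusion, and the quantity $d_E(A',B')$ plays no role in the claim. This misidentification is why the ``transferring the density drop'' paragraph does not close: you correctly observe that your $d_E(A',B') < \frac12 d_E(Z,Z')$ defect does not give an upper bound on $d_H(P')$, and the ensuing case analysis on complementary subtriads has no analogue in the paper.

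Once the objects are straightened out, the argument is a clean decomposition, not a contradiction-plus-averaging scheme. The crucial identities are $d_G(E,C) = d_H(P)$ and $d_G(E',C') = d_H(Q)$ for the triad $P = (E,\, A\times C,\, B\times C)$ and subtriad $Q = (E',\, A\times C',\, B\times C')$, where $A \in \Z_1$, $B \in \Z_2$ are the classes supporting $E$; one has $T(P) = \{(a,b,c) : (a,b) \in E,\, c \in C\}$ precisely because the other two edge sets are complete, so $t(P)=|E||C|$ and $e_G(E,C)=|E(H)\cap T(P)|$. The goal becomes $d_H(Q) \ge \frac12 d_H(P)$. Since $P,Q$ are not triads of $(\Z,\E)$, one decomposes $P$ into the genuine triads $P_i = (E, E_1, E_2)$ with $E_1 \in \E_1$, $E_2 \in \E_2$ (this partitions $T(P)$), uses the triangle counting lemma (via the slicing lemma on the two layers over $C$, $C'$) to verify $t(Q_i)/t(P_i) \ge \d$ --- this is where the bound $\e_2 \le (\d^2/88)\ell^{-3}$ is spent --- invokes the hypothesis to get $d_H(Q_i) \ge \frac23 d_H(P_i)$ for each $i$, and sums. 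The small multiplicative errors in $t(P_i)$, $t(Q_i)$ eat the gap between $\frac23$ and $\frac12$. You are right that the technical content is triangle counting with controlled error, but the combinatorial skeleton you built it on is the wrong one; there is no choice of ``third cluster $W$'' to average over and no dichotomy of complementary subtriads.
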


\begin{proof}[Proof (sketch):]
We remind the reader that the vertex classes of $G_H^3$ are $(\Aside \times \Bside,\, \Cside)$ (recall Definition~\ref{def:aux}), and that $\E_3$ and $\Z_3$ are the partition of $\Aside\times\Bside$ induced by $\E$ and the partition of $\Cside$ induced by $\Z$, respectively. Suppose $(\Z,\E)$ is as in the statement of the claim, and define $\E'$ as follows:
for every $A \in \Z_1$ and $C \in \Z_3$, replace all the bipartite graphs between $A$ and $C$ with the complete bipartite graph $A \times C$.
Do the same for every $B \in \Z_2$ and $C \in \Z_3$ (we do {\em not} change the partitions between $\Aside$ and $\Bside$). The simple (yet somewhat tedious to prove)
observation is that if all triads of $(\Z,\E)$ are regular then all triads of $(\Z,\E')$ are essentially as regular.
Once this observation is proved, the proof of the claim reduces to checking definitions. We thus defer the proof to Appendix~\ref{sec:FR-appendix}.
\end{proof}

%
%
Using Claim~\ref{claim:reduction}, we now prove our wowzer lower bound.

\begin{proof}[Proof of Theorem~\ref{theo:FR-LB}]
	Put $\a = 2^{-73}$.
	We have
	\begin{equation}\label{eq:FR-LB-ineq}
	c = 2^{-400} \le \a^4/1500 \;.
	\end{equation}
	Given $s$, let $H$ and $\V_0$ be as in Theorem~\ref{theo:main}.
	Let $\P=(\Z,\E)$ be an $\e$-regular $(\ell,t,\e_2(\ell))$-equipartition of $H$
	with $\e \le c p$, $\e_2(\ell) \le c \ell^{-3}$ and $\Z \prec \V_0$.
	Thus, the bound $|Z| \ge \wow(s)$ would follow from Theorem~\ref{theo:main}
	if we show that $\P$ is an  $\langle \a \rangle$-regular partition of $H$.
	First we need to show that $\P$ is $\langle \a \rangle$-good (recall Definition~\ref{def:k-good}). Let $E$ be a graph with $E \in \E$ on vertex classes $(Z,Z')$ (so $Z \neq Z' \in \Z$).
	We need to show that $E$ is $\langle \a \rangle$-regular.
	Since $\P$ is an $(\ell,t,\e_2(\ell))$-equipartition we have
	(recall Definition~\ref{def:ve-partition}) that $E$ is $\e_2(\ell)$-regular and $d(E) \ge \ell^{-1} - \e_2(\ell)$.
	The statement's assumption on $\e_2(\ell)$ thus implies
	$d(E) \ge 2\e_2(\ell)$.
	It follows that for every $S \sub Z$, $S' \sub Z'$ with $|S| \ge \e_2(\ell)|Z|$, $|S'| \ge \e_2(\ell)|Z'|$ we have $d_E(S,S') \ge d(E)-\e_2(\ell) \ge \frac12 d(E)$.
	This proves that $E$ is $\langle \e_2(\ell) \rangle$-regular, and since $\e_2(\ell) \le c \le \a$, that $E$ is $\langle \a \rangle$-regular, as needed.
	
	
	%
	%
	It remains to show that the $\langle \a \rangle$-good $\P$ is an $\langle \a \rangle$-regular partition of $H$ (recall Definition~\ref{def:k-reg}).
	By symmetry, it suffices to show that $\E_3 \cup \Z_3$ is an $\langle \a \rangle$-regular partition of $G_{H}^3$.
	Let $H'$ be obtained from $H$ by removing all ($3$-)edges in triads of $\P$ that are either not $\e$-regular in $H$ or have density at most $3\e$ in $H$.
	By Definition~\ref{def:FR-reg}, the number of edges removed from $H$ to obtain $H'$ is at most
	\begin{equation}\label{eq:FR-LB-modify}
	\e|V(H)|^3 + 3\e|V(H)|^3 \le 4\cdot c p |V(H)|^3
	\le (\a p/27)|V(H)|^3 = \a\cdot e(H) \;,
	\end{equation}
	where the second inequality uses ~(\ref{eq:FR-LB-ineq}),
	and the equality uses the fact that all three vertex classes of $H$ are of the same size. 
	Thus, in $H'$, every non-empty triad of $\P$ is $\e$-regular and of density at least $3\e$.
	Put $\d = (\a/2)^2$.
	Again by Definition~\ref{def:FR-reg}, for every triad $P$ of $\P$ and every subtraid $P'$ of $P$ with $t(P') \ge \d \cdot t(P)$ ($\ge \e \cdot t(P)$ by~(\ref{eq:FR-LB-ineq})) we have $d_{H'}(P') \ge d_{H'}(P)-\e \ge \frac23 d_{H'}(P)$.
	It follows from applying Claim~\ref{claim:reduction} with $H'$ and $\d$,
	using~(\ref{eq:FR-LB-ineq}),
	that $\E_3 \cup \Z_3$ is a perfectly $\langle \a \rangle$-regular partition of $G_{H'}^3$.
	Note that~(\ref{eq:FR-LB-modify}) implies that
	one can add/remove at most $\a \cdot e(G_{H}^3)$ edges of $G_{H}^3$ to obtain $G_{H'}^3$.
	Thus, $\E_3 \cup \Z_3$ is an $\langle \a \rangle$-regular partition of $G_{H}^3$, and as explained above, this completes the proof.
\end{proof}

\appendix

\section{Proof of Claim~\ref{claim:reduction}}\label{sec:FR-appendix}

Our purpose here is to prove Claim~\ref{claim:reduction}.
First, we will need some standard facts about $\e$-regular graphs.
We begin with the so-called \emph{slicing lemma} for graphs.

\begin{claim}
	\label{claim:slice2}
	Let $\a \ge \e > 0$.
	Let $(A,B)$ be an $\e$-regular pair of density $d$.
	If $A' \sub A$, $B' \sub B$ are of size $|A'| \ge \a|A|$, $|B'| \ge \a|B|$ then the pair $(A',B')$ is
	$2\e/\a$-regular of density $d \pm \e$.
\end{claim}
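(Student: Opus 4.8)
The plan is to derive both assertions directly from the $\e$-regularity of the original pair $(A,B)$, using nothing more than the triangle inequality to compare densities of small subpairs to the global density $d$. We may assume $\a \le 1$, since otherwise the hypothesis $|A'| \ge \a|A|$ cannot be satisfied and the statement is vacuous.

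For the density claim, note that $\a \ge \e$ together with $|A'| \ge \a|A|$ and $|B'| \ge \a|B|$ gives $|A'| \ge \e|A|$ and $|B'| \ge \e|B|$; hence $\e$-regularity of $(A,B)$ yields $|d(A',B') - d| \le \e$, i.e.\ the pair $(A',B')$ has density $d \pm \e$.

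For the regularity claim, set $\b = 2\e/\a$ and take arbitrary $A'' \sub A'$, $B'' \sub B'$ with $|A''| \ge \b|A'|$ and $|B''| \ge \b|B'|$. Chaining the size bounds, $|A''| \ge \b|A'| \ge \b\a|A| = 2\e|A| \ge \e|A|$, and similarly $|B''| \ge \e|B|$, so $\e$-regularity of $(A,B)$ gives $|d(A'',B'') - d| \le \e$. Combining this with $|d(A',B') - d| \le \e$ from the previous paragraph via the triangle inequality yields $|d(A'',B'') - d(A',B')| \le 2\e \le 2\e/\a = \b$, which is exactly the required bound.

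The only point requiring care is the bookkeeping of the quantifiers: one has to check that the \emph{relative} lower bounds $|A''| \ge \b|A'|$ and $|B''| \ge \b|B'|$ translate into the \emph{absolute} lower bounds $|A''| \ge \e|A|$ and $|B''| \ge \e|B|$ needed to invoke the regularity of $(A,B)$. The factor $2$ built into $\b = 2\e/\a$ is precisely what makes this step go through, and it simultaneously absorbs the factor-$2$ loss incurred by the triangle inequality. No other obstacle arises.
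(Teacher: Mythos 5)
Your proof is correct and is essentially identical to the paper's: both derive the density estimate directly from $\e$-regularity of $(A,B)$ using $\a \ge \e$, and both establish the regularity of $(A',B')$ by converting relative size bounds into absolute ones and then applying the triangle inequality. One small inaccuracy in your closing commentary: the factor $2$ in $\b = 2\e/\a$ is not actually needed to make the relative-to-absolute size conversion work (the paper shows that even subsets of relative size only $\e/\a$ already satisfy $|X| \ge \e|A|$, $|Y| \ge \e|B|$); the factor $2$ is needed solely to absorb the factor-$2$ loss from the triangle inequality.
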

\begin{proof}
	First, $|d(A',B')-d| \le \e$ is immediate as $G$ is $\e$-regular and $\a \ge \e$.
	Next, if $X \sub A'$ and $Y \sub B'$ satisfy $|X| \ge (\e/\a)|A'|$ and $|Y| \ge (\e/\a)|B'|$ then 
	$|X| \ge \e|A|$ and $|Y| \ge \e|B|$.
	Since $(A,B)$ is $(\e,d)$-regular we have
	$|d(X,Y)-d(A',B')| \le |d(X,Y)-d|+|d-d(A',B')| \le 2\e \le 2\e/\a$.
\end{proof}

Next, a well-known fact regarding the degrees of most vertices in a regular bipartite graph.

\begin{claim}\label{claim:degrees}
	For every $\e$-regular bipartite graph on $(X,Y)$,
	if $Y' \sub Y$ satisfies $|Y'| \ge \e|Y|$ then
	all vertices of $X$ but at most $2\e|X|$ have degree $(d(X,Y) \pm \e)|Y'|$ into $Y'$.
\end{claim}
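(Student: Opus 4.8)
The plan is a direct double-counting argument that applies the definition of $\e$-regularity to carefully chosen subsets. Write $d = d(X,Y)$. Call a vertex $x \in X$ \emph{heavy} if its number of neighbours in $Y'$ exceeds $(d+\e)|Y'|$, and \emph{light} if its number of neighbours in $Y'$ is below $(d-\e)|Y'|$; every remaining vertex of $X$ has degree $(d\pm\e)|Y'|$ into $Y'$, which is the conclusion we want. So it suffices to show that there are fewer than $\e|X|$ heavy vertices and fewer than $\e|X|$ light vertices.

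First I would handle the heavy vertices. Let $X^+ \sub X$ be the set of heavy vertices, and suppose towards a contradiction that $|X^+| \ge \e|X|$. Since also $|Y'| \ge \e|Y|$, the pair $(X^+,Y')$ satisfies the size requirement in the definition of $\e$-regularity, hence $|d(X^+,Y') - d| \le \e$, which gives $e(X^+,Y') \le (d+\e)|X^+||Y'|$. On the other hand, summing over the vertices of $X^+$, each of which has more than $(d+\e)|Y'|$ neighbours in $Y'$, yields $e(X^+,Y') > (d+\e)|X^+||Y'|$ --- a contradiction. Therefore $|X^+| < \e|X|$.

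The bound on the light vertices is symmetric. Letting $X^- \sub X$ be the set of light vertices, if $|X^-| \ge \e|X|$ then $\e$-regularity of $(X^-,Y')$ forces $e(X^-,Y') \ge (d-\e)|X^-||Y'|$, whereas summing degrees gives $e(X^-,Y') < (d-\e)|X^-||Y'|$, again a contradiction; hence $|X^-| < \e|X|$. Combining the two bounds, all but at most $|X^+| + |X^-| < 2\e|X|$ vertices of $X$ have degree $(d\pm\e)|Y'|$ into $Y'$, as claimed.

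There is no serious obstacle here; the two points to keep straight are that the $\e$-regularity hypothesis must be invoked for the pair $(X^+,Y')$ (respectively $(X^-,Y')$), which is legitimate precisely because both sides meet the $\e$-fraction size bound, and that the strict versus non-strict inequalities are oriented so that the contradiction is genuine.
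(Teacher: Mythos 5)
Your proof is correct and follows essentially the same approach as the paper's: the paper also derives a contradiction by applying $\e$-regularity to a set of at least $\e|X|$ vertices whose degrees into $Y'$ are all too high (or, by symmetry/WLOG, all too low). The only difference is organizational — you bound the heavy and light vertices separately rather than invoking a WLOG — but the underlying argument is identical.
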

\begin{proof}
	Otherwise, there is a set $X' \sub X$ of $\e|X|$ vertices whose degrees into $Y'$ are all, without loss of generality, greater than $(d(X,Y) + \e)|Y'|$. Thus $d(X',Y') > d(X,Y) + \e$, a contradiction.
\end{proof}

Finally, a proof of a version of the \emph{triangle counting lemma}
that we will need to prove Claim~\ref{claim:reduction}. Crucially, we will rely on the fact that one of the three bipartite graphs is not required to be regular.

\begin{lemma}[Triangle counting lemma]\label{lemma:triangle-counting}
	For every triad $P$ on vertex classes $(A,B,C)$,
	if both $(A,C)$ and $(B,C)$ are $\e$-regular
	then
	$$t(P) = \Big( d(A,B)d(A,C)d(B,C) \pm 7\e \Big) |A||B||C| \;.$$
\end{lemma}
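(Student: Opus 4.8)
The standard proof of the triangle counting lemma works by fixing a vertex $c \in C$, controlling the sizes of its neighborhoods $N_A(c) \subseteq A$ and $N_B(c) \subseteq B$ via the regularity of $(A,C)$ and $(B,C)$, and then counting edges of $(A,B)$ inside $N_A(c) \times N_B(c)$. The twist here is that $(A,B)$ is \emph{not} assumed regular, so we cannot use $d(N_A(c),N_B(c)) \approx d(A,B)$; instead we must sum the edge count $e_{(A,B)}(N_A(c),N_B(c))$ over all $c$ and recognize this sum as essentially $\sum_{(a,b) \in E(A,B)} |N_C(a) \cap N_C(b)|$, i.e.\ we count triangles by their $(A,B)$-edge rather than by their $C$-vertex. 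So the roles of the ``regular'' and ``irregular'' pairs are kept separate: the two regular pairs $(A,C),(B,C)$ are used to estimate codegrees into $C$, and the pair $(A,B)$ is only used to count how many pairs $(a,b)$ we are summing over — which is exactly $d(A,B)|A||B|$, no regularity needed.

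\smallskip
\noindent First I would write
$$t(P) = \sum_{(a,b) \in E_{(A,B)}(A,B)} \big| N_C(a) \cap N_C(b) \big| \;,$$
where $N_C(a)$ denotes the neighborhood of $a$ in $C$ via the bipartite graph $(A,C)$, and similarly $N_C(b)$ via $(B,C)$. The goal is then to show that for all but a negligible fraction of pairs $(a,b) \in A \times B$ we have $|N_C(a) \cap N_C(b)| = (d(A,C)d(B,C) \pm O(\e))|C|$. To get this, apply Claim~\ref{claim:degrees} to the $\e$-regular pair $(A,C)$: all but at most $2\e|A|$ vertices $a \in A$ have $|N_C(a)| = (d(A,C) \pm \e)|C|$; call these vertices \emph{good}. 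For a good vertex $a$, the set $Y' := N_C(a)$ has size $\ge (d(A,C)-\e)|C| \ge \e|C|$ (assuming $d(A,C)$ is not tiny — and if $d(A,C) \le \e$ then the whole left side is $\le \e|A||B||C|$ and the claimed estimate holds trivially, since the product term is also $O(\e)$), so Claim~\ref{claim:degrees} applied to the $\e$-regular pair $(B,C)$ with this $Y'$ tells us that all but $2\e|B|$ vertices $b \in B$ satisfy $|N_C(b) \cap N_C(a)| = (d(B,C) \pm \e)|N_C(a)| = (d(B,C) \pm \e)(d(A,C) \pm \e)|C| = (d(A,C)d(B,C) \pm 3\e)|C|$.

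\smallskip
\noindent Now I would assemble the count. The number of pairs $(a,b) \in E_{(A,B)}$ with $a$ good and $b$ good-relative-to-$a$ is $d(A,B)|A||B|$ minus at most $(2\e|A|)|B| + |A|(2\e|B|) = 4\e|A||B|$ ``bad'' pairs; each such non-bad pair contributes $(d(A,C)d(B,C) \pm 3\e)|C|$ to the sum, and each bad pair contributes between $0$ and $|C|$. Therefore
$$t(P) = \big(d(A,B)|A||B| \pm 4\e|A||B|\big)\big(d(A,C)d(B,C) \pm 3\e\big)|C| \pm 4\e|A||B||C|\;,$$
and expanding this product, bounding all densities by $1$, collects the error into something like $(3\e + 4\e + 4\e)|A||B||C| \le 7\e|A||B||C|$ — I would be a little careful with the constant but $7\e$ should come out with room to spare (one gets roughly $3\e + 4\e$ from the first product plus $\le 4\e$ from the last term, and some of these overlap). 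This yields the stated bound.

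\smallskip
\noindent \textbf{Main obstacle.} The only real subtlety is the case analysis when one of the regular densities $d(A,C)$ or $d(B,C)$ is smaller than $\e$ (or comparable to it), because then $N_C(a)$ may be too small to legitimately feed into Claim~\ref{claim:degrees} as the ``$Y'$'' with $|Y'| \ge \e|C|$. In that regime one instead observes directly that $t(P) \le e_{(A,C)}(A,C)\cdot|B|$-type bounds force $t(P) = O(\e|A||B||C|)$, and simultaneously the main term $d(A,B)d(A,C)d(B,C)|A||B||C| = O(\e|A||B||C|)$, so the asserted equality holds within the $7\e$ slack. Handling this boundary case cleanly, so that the same ``$\pm 7\e$'' statement covers all densities uniformly, is where I would spend the most attention; the generic case is a routine double application of Claim~\ref{claim:degrees} as sketched above.
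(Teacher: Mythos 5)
Your proof is essentially the same as the paper's: both count triangles by summing $\codeg(a,b)$ over $(a,b)\in E(A,B)$, bound the set of ``bad'' pairs (outside your $\pm 3\e$ codegree window) by $4\e|A||B|$ via two nested applications of Claim~\ref{claim:degrees}, and assemble the total with the same $3\e+4\e=7\e$ error split. The paper disposes of the low-density boundary case with the same one-line remark you spend your last paragraph on (``assume $d(A,C)\ge 2\e$, as otherwise there is nothing to prove'').
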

\begin{proof}
	Let
	$F = \{ (a,b) \in A \times B \,:\, \codeg(a,b) \neq (d(A,C)d(B,C) \pm 3\e)|C| \}$.
	To bound $|F|$, first apply Claim~\ref{claim:degrees} on $(A,C)$ with $Y'=C$. Thus, there is a subset $A^* \sub A$ with $|A^*| \le 2\e|A|$ such that every $a \notin A^*$ satisfies $e(a,C) = (d(A,C) \pm \e)|C|$.
	Let $a \in A \sm A^*$ and apply Claim~\ref{claim:degrees}, this time on $(B,C)$ with $Y'=E(a,C)$, and note that $E(a,C) \sub C$ satisfies $e(a,C) \ge (d(A,C)-\e)|C| \ge \e|C|$ as required (for the last inequality we assume $d(A,C) \ge 2\e$, as otherwise there is nothing to prove).
	Thus, there is a subset $B^*_a \sub B$ with $|B^*_a| \le 2\e|B|$ such that every $b \notin B^*_a$ satisfies
	$$\codeg(a,b) = (d(B,C) \pm \e)\cdot (d(A,C) \pm \e)|C|
	= (d(A,C)d(B,C) \pm 3\e)|C| \;.$$
	Therefore,
	$|F| \le |A^*||B| + \sum_{a \in A \sm A^*}|B^*_a| \le 4\e|A||B|$.
	We deduce that
	\begin{align*}
	t(P) &= \sum_{(a,b) \in E(A,B)} \codeg(a,b) =
	e(A,B) \cdot (d(A,C)d(B,C) \pm 3\e)|C| \pm |F||C| \\
	&= d(A,B) \cdot (d(A,C)d(B,C) \pm 3\e) |A||B||C| \pm 4\e|A||B||C|\;,
	\end{align*}
	which completes the proof.
	%
	%
\end{proof}

\subsection{Proof of Claim~\ref{claim:reduction}}


%
%


\begin{proof}
	Put $G=G_H^3$,
	and let $E \in \E_3$ and $C \in \Z_3$.
	Recall that we can also view $E$ as vertex subset of $G$.
	Our goal is to prove that $G[E,C]$ is 
	$\langle \d' \rangle$-regular with $\d'=2\sqrt{\d}$.
	Fix $E' \sub E$ and $C' \sub C$ with $|E'| \ge \d'|E|$ and $|C'| \ge \d'|C|$.
	Thus, our goal is to prove that
	$d_{G}(E',C') \ge \frac12 d_{G}(E,C)$.
	Suppose that $E$ lies between $A \in \Z_1$ and $B \in \Z_2$, and consider the following triad and subtriad;
	$$P=(E,\,A \times C,\, B \times C)\quad \text{ and }\quad Q=(E',\,A \times C',\,B \times C') \;.$$
	We claim that
	\begin{equation}\label{eq:red-d}
	d_{G}(E,C) = d_{H}(P) \quad\text{ and }\quad d_{G}(E',C') = d_{H}(Q) \;.
	\end{equation}

	Note that proving~(\ref{eq:red-d}) would mean that to complete the proof it suffices to show that
	\begin{equation}\label{eq:red-goal}
	d_H(Q) \ge \frac12 d_H(P)  \;.
	\end{equation}
	To prove~(\ref{eq:red-d}) first note that the set of triangles in $Q$ is given by
	\begin{align*}
	T(Q) &= \{ \,(a,b,c) \colon (a,b) \in E',\, (a,c) \in A \times C',\, (b,c) \in B \times C' \,\} \\
	&= \{ \,(a,b,c) \,:\, (a,b) \in E',\, c \in C' \,\} \;.
	\end{align*}
	Hence in particular
	\begin{equation}\label{red-tQ}
	t(Q)=|E'||C'| \;.
	\end{equation}
	Observe that, by definition, $(a,b,c) \in E(H) \cap T(Q)$ if and only if $((a,b),c) \in E_G(E',C')$.
	Together with~(\ref{red-tQ}) this implies that, indeed,
	$$d_{G}(E',C') = \frac{e_{G}(E',C')}{|E'||C'|} = \frac{|E(H) \cap T(Q)|}{t(Q)} = d_{H}(Q) \;.$$
	Similarly, $T(P) = \{(a,b,c) \,:\, (a,b) \in E,\, c \in C\}$, hence 
	\begin{equation}\label{red-tP}
	t(P)=|E||C| \;.
	\end{equation}
	Therefore,
	$(a,b,c) \in E(H) \cap T(P)$ if and only if $((a,b),c) \in E_G(E,C)$,
	which together with~(\ref{red-tP}) implies that
	$$d_{G}(E,C) = \frac{e_{G}(E,C)}{|E||C|} = \frac{|E(H) \cap T(P)|}{t(P)} = d_{H}(P) \;.$$
	
	Having completed the proof of~(\ref{eq:red-d}),
	it now remains to prove~(\ref{eq:red-goal}).
	Let $\{P_i\}_i$ denote the set of triads of $(\Z,\E)$ that are subtriads of $P$ of the form $(E,E_1,E_2)$ with
	$E_1 \in \E_1$ and $E_2 \in \E_2$.
	Since $\E_1$ and $\E_2$ are partitions of $B \times C$ and $A \times C$, respectively, we have that
	\begin{equation}\label{eq:red-partitionP}
	T(P) = \bigcup_i T(P_i) \,\text{ is a partition of $T(P)$.}
	\end{equation}
	Furthermore, for each $P_i$ let $Q_i$ denote its subtriad induced by $E',C'$. Thus, each $Q_i$ is of the form $(E',E_1[A,C'],E_2[A,C'])$.
	It follows from~(\ref{eq:red-partitionP}) that
	\begin{equation}\label{eq:red-partitionQ}
	T(Q) = \bigcup_i T(Q_i) \,\text{ is a partition.}
	\end{equation}
	As $(\Z,\E)$ is an $(\ell,t,\e_2)$-equipartition,
	all graphs in $\E$ are $\e_2$-regular of density
	$1/\ell \pm \e_2$.
	Thus, applying Fact~\ref{lemma:triangle-counting} on $P_i$ implies that
	\begin{align}
	\begin{split}\label{eq:red-tP}
	t(P_i) &= \big( d(E)(1/\ell \pm \e_2)^2 \pm 7\e_2 \big)|A||B||C| \\
	&= \big( d(E)/\ell^2 \pm 10\e_2 \big) |A||B||C|
	= \big( 1 \pm 20\ell^3\e_2 \big) |E||C|/\ell^2
	= \big( 1 \pm \g \big) \cdot t(P)/\ell^2\;,
	\end{split}
	\end{align}
	with $\g := \d'/8 = \sqrt{\d}/4$,
	where the third equality bounds the error term using $d(E) \ge 1/\ell-\e_2 \ge 1/2\ell$,
	and the fourth equality uses~(\ref{red-tP}) and the statement's assumption on $\e_2$.
	Since $|C'| \ge \d|C|$, Claim~\ref{claim:slice2} implies that $E_1[A,C']$ and $E_2[B,C']$ are each $2\e_2/\d'$-regular of density $1/\ell \pm 2\e_2$.
	Thus, applying Fact~\ref{lemma:triangle-counting} on $Q_i$ implies that
	\begin{align}
	\begin{split}\label{eq:red-tQ}
	t(Q_i) &= \big( d(E')(1/\ell \pm 2\e_2)^2 \pm 14\e_2/\d \big)|A||B||C'| \\
	&= \big( d(E')/\ell^2 \pm 22\e_2/\d \big) |A||B||C'|
	= \big( 1 \pm 22\ell^3\e_2/\d^{3/2} \big) |E'||C'|/\ell^2
	= \big( 1 \pm \g \big) \cdot t(Q)/\ell^2  \;,
	\end{split}
	\end{align}
	where the third equality bounds the error term using the assumption $|E'| \ge \d'|E|$, which implies $d(E') \ge \d' d(E) \ge \sqrt{\d}/\ell$,
	and the fourth equality uses~(\ref{red-tQ}) and the statement's assumption on $\e_2$.
	In particular, using the assumptions $|E'| \ge \d'|E|$, $|C'| \ge \d'|C|$ (and~(\ref{red-tQ}),~(\ref{red-tP})), we deduce the lower bound
	\begin{equation}\label{eq:red-threshold}
	\frac{t(Q_i)}{t(P_i)}
	\ge \frac{1-\g}{1+\g}(\d')^2
	\ge \frac34 \cdot (2\sqrt{\d})^2 \ge \d \;,
	\end{equation}
	where we used the inequality
	\begin{equation}\label{eq:red-quotient}
	\frac{1-\g}{1+\g} \ge 1-2\g = 1-\frac14\d' \ge \frac34 \;.
	\end{equation}
	
	Recall that our goal is to prove~(\ref{eq:red-goal}).
	For a triad $X$ we henceforth abbreviate $e_H(X):=|E(H) \cap T(X)|=t(X) \cdot d_H(X)$.
	By~(\ref{eq:red-partitionP}) we have
	\begin{align*}
	e_H(P) &= \sum_i e_H(P_i)
	= \sum_i d_H(P_i) \cdot t(P_i)
	\le (1+\g)t(P) \cdot \frac{1}{\ell^2}\sum_i d_H(P_i) \;,
	\end{align*}
	where the inequality uses~(\ref{eq:red-tP}).
	Let $d=(1/\ell^2)\sum_i d_H(P_i)$. Then
	\begin{equation}\label{eq:red-dP}
	d_H(P) \le (1 + \g)d \;.
	\end{equation}
	The statement's assumption on the regularity of $(\Z,\E)$
	implies, together with~(\ref{eq:red-threshold}), that
	\begin{equation}\label{eq:red-reg}
	d_H(Q_i) \ge \frac23 d_H(P_i) \;.
	\end{equation}
	By~(\ref{eq:red-partitionQ}) we have
	\begin{align*}
	e_H(Q) &= \sum_i e_H(Q_i) =
	\sum_i d_H(Q_i) \cdot t(Q_i)
	\ge \sum_i \frac23 d_H(P_i) \cdot t(Q_i)
	\ge \frac23(1-\g)t(Q)\frac{1}{\ell^2}\sum_i d_H(P_i) \;,
	\end{align*}
	where the first inequality uses~(\ref{eq:red-reg}) and the second inequality uses~(\ref{eq:red-tQ}).
	This means that
	$$d_H(Q) \ge \frac23(1-\g)d \ge \frac23\cdot\frac{1-\g}{1+\g}d_H(P) \ge \frac12 d_H(P) \;,$$
	where the second inequality uses~(\ref{eq:red-dP}) and the third inequality uses~(\ref{eq:red-quotient}).
	We have thus proved~(\ref{eq:red-goal}) and are therefore done.	
\end{proof}

\end{document}